\newcommand{\LL}{\mathop{\hbox{\vrule height 7pt width .5pt depth 0pt
\vrule height .5pt width 6pt depth 0pt}}\nolimits}
\newcommand{\req}[1]{(\ref{#1})}
\newcommand{\intt}[1]{\underset{#1}{\int}}
\newcommand{\R}{\mathbb R}
\newcommand{\NN}{\mathbb{N}}
\newcommand{\beq}{\begin{equation}}
\newcommand{\eeq}{\end{equation}}
\newcommand{\ds}{\displaystyle}
\begin{document}

\markboth{V. Millot \&  A. Pisante} {Symmetry of local minimizers for the three dimensional Ginzburg-Landau functional}

\title{SYMMETRY OF LOCAL MINIMIZERS 
FOR THE THREE DIMENSIONAL GINZBURG-LANDAU FUNCTIONAL}

\author{VINCENT MILLOT}

\address{D\'epartement de Math\'ematiques\\
Universit\'e de Cergy-Pontoise\\
2 avenue Adolphe Chauvin\\
95302 Cergy-Pontoise cedex, France\\
\emph{\tt{vmillot@math.u-cergy.fr}}}

\author{ADRIANO PISANTE}

\address{Department of Mathematics\\
University of Rome 'La Sapienza'\\
P.le Aldo Moro 5, 00185 Roma, Italy\\
\emph{\tt{pisante@mat.uniroma1.it}}}

\maketitle

\begin{abstract}
{\bf Abstract.} We classify nonconstant entire local minimizers of the standard Ginzburg-Landau functional  for maps
in ${H}^{1}_{\rm{loc}}(\mathbb{R}^3;\mathbb{R}^3)$ satisfying a natural energy bound.  
Up to  translations and rotations, such solutions of the Ginzburg-Landau system are given by an explicit solution equivariant under the action of the orthogonal group. 
\end{abstract}

\keywords{Ginzburg-Landau equation, harmonic maps, local minimizers.}

\ccode{Mathematics Subject Classification 2000: 35J50, 58E20, 58J70.}

%%%%%%%%%%%%%%%%%%%%%%%%%%%%%%%%%%%%%%%%%%%%%%%%%%%%%%%%%%%%%%%%%%%%%%%%%%%%%%%%%%%%
%%%%%%%%%%%%%%%%%%%%%%%%%%%%%%%%%%%%%%%%%%%%%%%%%%%%%%%%%%%%%%%%%%%%%%%%%%%%%%%%%%%%

\section{Introduction}

Symmetry results for nonlinear elliptic PDE's are difficult and usually rely on a clever use of the maximum principle  
as in the celebrated Serrin's moving planes method, or the use of rearrengement techniques as the Schwartz symmetrization (see, {\it e.g.}, \cite{B2} for a survey). 
In case of systems the situation is more involved since there are no general tools for proving this kind of results. 

In this paper we investigate symmetry properties of maps $u:\mathbb{R}^3 \to \mathbb{R}^3$ which are entire (smooth) solutions of the system
\begin{equation}
\label{GL}
\Delta u + u (1-|u|^2)=0
\end{equation}
possibly subject to the condition at infinity 
\begin{equation}
\label{modtoone}
|u(x)| \to 1 \quad \hbox{as} \quad |x|\to +\infty \, .
\end{equation}
The system \req{GL} is naturally  associated to the energy functional
\begin{equation}
\label{GLenergy}
E(v, \Omega):= \int_{\Omega} \bigg( \frac12 |\nabla v|^2 +\frac14 (1-|v|^2)^2 \bigg)dx 
\end{equation}
defined for $v\in H^1_{\rm loc}(\R^3;\R^3)$ and a bounded open set $\Omega\subset \R^3$. 
Indeed, if $u\in H^1_{\rm loc}(\R^3;\R^3)$ is a critical point of $E(\cdot,\Omega)$ for every $\Omega$ then $u$ 
is a weak solution of \req{GL} and thus a classical solution according to the standard regularity theory for elliptic equations. 
In addition, any weak solution $u$ of \req{GL} satisfies the natural bound $|u| \leq 1$ in the entire space, see \cite[Proposition 1.9]{F}.

Here the ``boundary condition" \req{modtoone} is added
to rule out solutions with values in a lower dimensional Euclidean space 
like the scalar valued solutions relevant for the De Giorgi conjecture (see, {\it e.g.}, \cite{AC}), or the explicit vortex solutions 
of \cite{HH} (see also \cite{BBH}) arising in the 2D Ginzburg-Landau model. 
More precisely, under the assumption \req{modtoone} the map $u$  has a well defined topological degree 
at infinity given by
$${\rm deg}_\infty u := {\rm deg} \bigg(\frac{u}{|u|}, \partial B_R \bigg)$$ 
whenever $R$ is large enough, and we are interested in solutions satisfying ${\rm deg}_\infty u \neq 0$. 
A special symmetric solution $U$ to \req{GL}-\req{modtoone} with ${\rm deg}_\infty U=1$ has been constructed in \cite{AF} and \cite{G} in the form
\begin{equation}
\label{GLsolutions}
U(x)=\frac{x}{|x|} f(|x|) \, ,
\end{equation}
for a unique function $f$ vanishing at zero and increasing to one at infinity. 
Taking into account the obvious invariance properties of \req{GL} and \req{GLenergy}, infinitely many solutions can be obtained from \req{GLsolutions} by translations on the domain 
and orthogonal transformations on the image. In addition, these solutions  satisfy $R^{-1}E(u,B_R) \to 4\pi$ as $R \to +\infty$. 
It is easy to check that $U$ as in \req{GLsolutions} is the unique solution $u$ of \req{GL}-\req{modtoone} such that $u^{-1}(\{0\})=\{ 0\}$, ${\rm deg}_\infty  u=1$ and 
$u$ is $O(3)$-equivariant, {\it i.e.}, $\displaystyle{u(Tx)=Tu(x)}$ for all $x \in \mathbb{R}^3$ and for all $T \in O(3)$ (see Remark \ref{equivariance}). 
In addition $u=U$ satisfies $|u(x)|=1+\mathcal{O}(|x|^{-2})$ as $|x|\to+\infty$.
\vskip5pt

In \cite{B2}, H. Brezis has formulated the following problem: 
\begin{itemize}
\item[] {\it Is any solution to \req{GL} satisfying \req{modtoone} (possibly with a ``good" rate of convergence) and ${\rm deg}_\infty u= \pm 1 $, of the form \req{GLsolutions} 
(up to a translation on the domain and an orthogonal transformation on the image)?}
\end{itemize}
 
\noindent In this paper we investigate this problem focusing on local minimizers of the energy in the following sense. 
 
\begin{definition}  
Let $u\in H^1_{\rm loc}(\R^3;\R^3)$. We say that $u$ is a local minimizer of $E(\cdot)$ if 
\begin{equation}
\label{minimality}
E(u,\Omega) \leq E(v,\Omega) 
\end{equation}
for any bounded open set $\Omega\subset\R^3$ and $v\in H^1_{\rm loc}(\R^3;\R^3)$ satisfying $v-u\in H^1_0(\Omega; \mathbb{R}^3)$.
\end{definition}
Obviously local minimizers are smooth entire solutions of \eqref{GL} but it is not clear that 
nonconstant local minimizers do exist or if the solutions obtained from \req{GLsolutions} are locally minimizing. 
In case of maps from the plane into itself the analogous problems 
are of importance in the study of the asymptotic behaviour of minimizers of the 2D Ginzburg-Landau energy near their vortices, 
the explicit solutions of the form \req{GLsolutions} giving the asymptotic profile of the minimizers in the vortex cores. Both these questions were essentially solved affirmatively in \cite{M1,M2,Sa} 
(see also \cite{R} for the more difficult gauge-dependent problem,
{\it i.e.}, in presence of a magnetic field) but the complete classification of entire solutions to \req{GL}-\req{modtoone}, even in the 2D case remains open. 
\vskip5pt

The first result of this paper concerns the existence of nonconstant local minimizers. 

\begin{theorem}
\label{existence}
There exists a smooth nonconstant solution $u:\R^3\to\R^3$ of  (\ref{GL})-(\ref{modtoone}) which is a local minimizer of $E(\cdot)$.  
In addition, $u(0)=0$,  ${\rm deg}_\infty u=1$ and  $R^{-1}E(u,B_R) \to 4\pi$ as $R \to +\infty$.
\end{theorem}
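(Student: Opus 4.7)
The natural strategy is to obtain $u$ as a locally smooth limit of Dirichlet minimizers on a sequence of balls $B_R$ with $R\to+\infty$, using the equivariant solution $U$ from \eqref{GLsolutions} as a benchmark. I would first fix, for each $R>1$, a minimizer $u_R$ of $E(\cdot,B_R)$ in the class $\{v\in H^1(B_R;\R^3): v(x)=x/|x|\text{ on }\partial B_R\}$; existence is standard by the direct method, and elliptic regularity plus $|u_R|\le 1$ makes $u_R$ a smooth solution of \eqref{GL} in $B_R$. Comparing $u_R$ with (a mild modification near the boundary of) $U|_{B_R}$ gives a uniform upper bound
\[
E(u_R,B_R)\le 4\pi R+O(1).
\]
The Brouwer degree of the boundary map is $1$, so $u_R$ must vanish at some point $y_R\in B_R$; translating, set $\tilde u_R(x):=u_R(x+y_R)$, which minimizes $E(\cdot,B_R-y_R)$ and satisfies $\tilde u_R(0)=0$.

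The key analytic step is to show that, along a subsequence, $R-|y_R|\to+\infty$, so that the shifted domains $B_R-y_R$ exhaust all of $\R^3$ and the limit lives on the entire space rather than on a half-space. I would do this by energy localization: using the uniform bound $E(u_R,B_R)\le 4\pi R+O(1)$ together with the monotonicity formula for minimizers of $E$, one controls $E(\tilde u_R,B_\rho)$ uniformly in $R$ for every fixed $\rho$. A zero sitting too close to $\partial B_R$ would force extra energy to cap off the hedgehog profile against the boundary data, contradicting the $4\pi R+O(1)$ bound (comparison with $U$ centered at $y_R$, rotated to match the boundary condition, quantifies this). Once the localization is established, uniform $C^{k,\alpha}_{\rm loc}$ bounds from standard elliptic regularity allow extracting a subsequence $\tilde u_R\to u$ in $C^2_{\rm loc}(\R^3)$, and $u$ solves \eqref{GL} with $u(0)=0$.

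It remains to verify that $u$ is a local minimizer and meets the asymptotic conditions. Local minimality follows from a classical perturbation argument: for any ball $B_\rho$ and any $w$ with $w-u\in H^1_0(B_\rho;\R^3)$, one interpolates between $w$ and $\tilde u_R$ in a thin annulus around $\partial B_\rho$ to build a valid competitor for $\tilde u_R$, and the annular correction cost goes to $0$ by the $C^2_{\rm loc}$ convergence; passing to the limit in $E(\tilde u_R,B_\rho)\le E(\text{competitor},B_\rho)$ yields \eqref{minimality}. The condition $|u|\to 1$ at infinity and $\deg_\infty u=1$, together with $R^{-1}E(u,B_R)\to 4\pi$, follow by passing to the limit in the uniform upper bound and comparing with the lower bound $R^{-1}E(u,B_R)\ge 4\pi-o(1)$ coming from the nontrivial degree (via the standard vortex energy lower bound). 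The main obstacle in the whole scheme is the second step, the \emph{localization of the zero} $y_R$: without it one only gets a local minimizer on a half-space, and ensuring the zero does not escape to the boundary requires a careful quantitative energy-comparison argument matched against the profile of $U$.
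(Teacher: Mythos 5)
Your strategy mirrors the paper's: minimize the Ginzburg--Landau energy on increasingly large balls with hedgehog boundary data (equivalently, minimize $E_\lambda$ on the unit ball and rescale), locate a zero, recenter, pass to a $C^2_{\rm loc}$ limit, and transfer minimality. You also correctly identify the crux --- controlling where the zero sits relative to the boundary --- as the real obstacle. The difficulty is that your proposed resolution of that obstacle is only a heuristic, and the naive quantitative energy comparison you sketch does not obviously close.

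What is actually needed is quite a bit more than ``comparing with $U$ centered at $y_R$.'' First, one must show that the whole vorticity set $\{|u_R|\le\delta\}$ (not just one chosen zero) stays near the center and has diameter $O(1)$ at unit scale, uniformly in $R$; otherwise the rescaled limit could be constant, or have the wrong degree, or the normalized energy could fail to converge to $4\pi$. The paper establishes this in Proposition~\ref{Vorticity} through three nontrivial ingredients: (i) strong $H^1$ convergence of the minimizers to $x/|x|$, which rests on the Brezis--Coron--Lieb uniqueness theorem for the minimizing $\mathbb{S}^2$-valued harmonic map with boundary datum $x$; (ii) interior and boundary $\varepsilon$-regularity (Lemmas~\ref{epsregularity} and~\ref{epsregularitybd}) to upgrade small local energy to $C^1$ bounds and push the vorticity set to the center; and (iii) a blow-up/contradiction argument using Lin--Wang's defect-measure structure and the Lin--Rivi\`ere $8\pi$-quantization for stationary harmonic maps to rule out a vorticity set of diameter $\gg\lambda^{-1}$. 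None of this is captured by an energy comparison against a translated and rotated copy of $U$: such a competitor has the wrong boundary data, and the mismatch estimate you would need is exactly as hard as the statement you are trying to prove. So the main step of your proposal is a genuine gap; the remaining steps (uniform local energy bounds via monotonicity, $C^2_{\rm loc}$ extraction, transfer of minimality to the limit by interpolation in a thin annulus, and the final quantization $R^{-1}E(u,B_R)\to4\pi$ via the upper bound $\le4\pi$ combined with the $4\pi k$ quantization for nonconstant entire solutions) match the paper and are sound.
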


The construction of a nonconstant local minimizer relies on a careful analysis of the vorticity set for solutions $u_\lambda$ to
\begin{equation}
\label{GLball}
(P_\lambda) \quad 
\begin{cases}
\ds \Delta u+\lambda^2u(1-|u|^2)=0  & \quad \hbox{in} \, \, B_1\,,  \\
u={\rm Id}  & \quad \hbox{on} \, \, \partial B_1 \,,
\end{cases}\quad\lambda >0\,,
\end{equation}
which are absolute minimizers of the Ginzburg-Landau functional $E_\lambda(u,B_1)$ on $H^1_{\rm{Id}}(B_1;\mathbb{R}^3)$ where 
$$E_\lambda(u,\Omega):=\int_{\Omega}e_\lambda(u)dx \quad\text{with}\quad e_{\lambda}(u):=\frac12 |\nabla u|^2+\frac{\lambda^2}{4}(1-|u|^2)^2\,.$$ 
Up to a translation, we will obtain a locally  minimizing solution to \eqref{GL} 
as  a limit of $u_{\lambda_n}(x/\lambda_n)$ for some sequence $\lambda_n \to +\infty$. 
\vskip5pt

As the smooth entire solutions of \req{GL},  critical points of the energy functional $E_{\lambda}(\cdot,\Omega)$  
satisfy a fundamental monotonicity identity (see \cite{Sc}, \cite{LW2}). 

\begin{lemma}[Monotonicity Formula]\label{monform}
Assume that $u:\Omega\to\R^3$ solves $\Delta u+\lambda^2 u(1-|u|^2)=0$ in some  open set $\Omega\subset \R^3$ and $\lambda>0$. Then,   
\begin{multline}
\label{monotonicity}
\frac{1}{R}\,E_\lambda(u,B_{R}(x_0)) =\frac{1}{r}\,E_\lambda(u,B_{r}(x_0)) +\\
+\int_{B_{R}(x_0)\setminus B_{r}(x_0)}\frac{1}{|x-x_0|} \bigg| \frac{\partial u}{\partial |x-x_0|}\bigg|^2 dx
+ \frac{\lambda^2}{2}\int_{r}^{R}\frac{1}{t^2} \int_{B_t(x_0)}(1-|u|^2)^2dx\,dt \, ,
\end{multline}
for any $x_0 \in \Omega$ and any $0<r \leq R\leq {\rm dist}(x_0,\partial \Omega)$. 
\end{lemma}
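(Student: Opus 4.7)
The plan is to reduce the identity to a differential version and derive that via a Pohozaev-type integration by parts. Assume without loss of generality that $x_0=0$, set
$$\Phi(t):=\frac{1}{t}\,E_\lambda(u,B_t),$$
and observe that $\Phi$ is absolutely continuous in $t$ by the smoothness of $u$. The claim \eqref{monotonicity} follows by integrating from $r$ to $R$ the differential identity
$$\Phi'(t)=\frac{1}{t}\int_{\partial B_t}\left|\frac{\partial u}{\partial \nu}\right|^2 d\sigma+\frac{\lambda^2}{2t^2}\int_{B_t}(1-|u|^2)^2\,dx,$$
and rewriting the first term on the right as $\int_{B_R\setminus B_r}|x|^{-1}|\partial u/\partial|x||^2dx$ via the coarea formula. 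Since by direct differentiation
$$\Phi'(t)=-\frac{1}{t^2}E_\lambda(u,B_t)+\frac{1}{t}\int_{\partial B_t} e_\lambda(u)\,d\sigma,$$
the whole problem reduces to establishing the Pohozaev-type identity
$$t\int_{\partial B_t} e_\lambda(u)\,d\sigma-t\int_{\partial B_t}\Big|\frac{\partial u}{\partial \nu}\Big|^2 d\sigma=E_\lambda(u,B_t)+\frac{\lambda^2}{2}\int_{B_t}(1-|u|^2)^2dx.$$

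To prove this identity, I would take the scalar product of the equation $\Delta u+\lambda^2 u(1-|u|^2)=0$ with the radial multiplier $x\cdot\nabla u=x_k\partial_k u$ and integrate over $B_t$. For the Laplacian term, the standard manipulation $\Delta u^i\cdot(x_k\partial_k u^i)=\partial_j(\partial_j u^i\,x_k\partial_k u^i)-|\nabla u|^2-\frac{1}{2}x_k\partial_k|\nabla u|^2$, combined with the divergence theorem applied to both the boundary term and to $\int_{B_t} x_k\partial_k|\nabla u|^2 dx$, yields
$$\int_{B_t}\Delta u\cdot(x\cdot\nabla u)\,dx=t\int_{\partial B_t}\Big|\frac{\partial u}{\partial\nu}\Big|^2 d\sigma-\frac{t}{2}\int_{\partial B_t}|\nabla u|^2 d\sigma+\frac{1}{2}\int_{B_t}|\nabla u|^2dx,$$
where I used $x\cdot\nabla u^i=t\,\partial u^i/\partial\nu$ on $\partial B_t$. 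For the nonlinear term, the observation $u^i\partial_k u^i=\frac{1}{2}\partial_k|u|^2$ lets me rewrite $u\cdot(x\cdot\nabla u)(1-|u|^2)$ as $-\frac{1}{4}x_k\partial_k(1-|u|^2)^2$, and one more divergence theorem produces
$$\lambda^2\int_{B_t}u\cdot(x\cdot\nabla u)(1-|u|^2)dx=\frac{3\lambda^2}{4}\int_{B_t}(1-|u|^2)^2 dx-\frac{\lambda^2 t}{4}\int_{\partial B_t}(1-|u|^2)^2 d\sigma.$$
Adding the two displays and regrouping using the definition of $e_\lambda(u)$ gives the desired Pohozaev identity.

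The main obstacle here is purely bookkeeping: there is no PDE difficulty because $u$ is smooth in $\Omega$, so all integrations by parts are legitimate, and the algebraic rearrangement to single out $E_\lambda(u,B_t)$ on the right and $e_\lambda$ on the boundary is what makes the formula work with the sharp constants. Once the Pohozaev identity is obtained, the only care needed in the final step is to verify that dividing by $t^2$ indeed yields the expression for $\Phi'(t)$, and that Fubini applied to the coarea representation recovers the radial-derivative integral on the annulus $B_R\setminus B_r$.
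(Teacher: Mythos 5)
Your proof is correct. The paper does not in fact prove Lemma \ref{monform}; it only cites \cite{Sc} and \cite{LW2}, and what you have written is precisely the standard Rellich--Pohozaev multiplier argument underlying those references: differentiate $\Phi(t)=t^{-1}E_\lambda(u,B_t)$, reduce to the Pohozaev identity on $B_t$, and prove the latter by pairing the equation with $x\cdot\nabla u$ and integrating by parts. The bookkeeping checks out, including the dimension-dependent constant: in $\mathbb{R}^3$ the divergence theorem produces a factor $3$ in both the gradient and the potential terms, and after recombining with the $\tfrac12$ and $\tfrac{\lambda^2}{4}$ weights in $e_\lambda$ the surplus is exactly $\tfrac{\lambda^2}{2}\int_{B_t}(1-|u|^2)^2dx$, which after dividing by $t^2$ and integrating in $t$ gives the last term in \eqref{monotonicity}.
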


An entire solution $u$ to \eqref{GL}  for which the left hand side of \req{monotonicity} (with $\lambda=1$) is bounded, {\it i.e.},  
\begin{equation}\label{lingro}
\sup_{R>0}R^{-1}E(u,B_R)<+\infty\,,  
\end{equation}
can be studied near infinity through a ``blow-down" analysis. 
More precisely, for each $R>0$ we introduce the scaled map $u_R$ defined by  
\begin{equation}\label{defscmap}
u_R(x):=u(Rx)\,,
\end{equation}
which is a smooth entire solution of 
\begin{equation}\label{GLresc}
\Delta u_R+R^2 u_R(1-|u_R|^2)=0 \, .
\end{equation}
Whenever $E(u,B_R)$ grows at most linearly with $R$, $E_R(u_R,\Omega)$ is equibounded and thus  $\{u_R\}_{R>0}$ is bounded in $H^1_{\rm{loc}}(\R^3;\R^3)$. 
Any weak limit $u_\infty:\R^3\to\R^3$ of $\{u_R\}_{R>0}$ as $R\to+\infty$ is called a tangent map to $u$ at infinity, and the potential term in the energy  forces $u_\infty$
to take values into $\mathbb{S}^2$. Moreover (see \cite{LW2}), $u_\infty$ turns out to be harmonic and positively $0$-homogeneous, {\it i.e.}, $u_\infty(x)=\omega(x/|x|)$ for some harmonic map 
$\omega:\mathbb{S}^2\to \mathbb{S}^2$, and 
$u_\infty$ is a solution or a critical point (among $\mathbb{S}^2$-valued maps) of
\[ \Delta v+v|\nabla v|^2=0 \, , \qquad E_\infty(v,\Omega)= \int_{\Omega} \frac12 |\nabla v|^2 dx \ , \]
respectively. This is readily the case for the equivariant solution \req{GLsolutions}, where $U_R(x) \to x/|x|$ strongly in $H^1_{\rm loc}(\R^3;\R^3)$ as $R \to+ \infty$. 
In the general case, uniqueness of the tangent map at infinity is not guaranteed and the possible lack of compactness of $\{u_R\}_{R>0}$ 
have been carefully analyzed in \cite{LW1,LW2} where the blow-up analysis 
of the defect measure arising in the limit of the measures $e_R(u_R)dx$ is performed. As a byproduct (see \cite[Corollary D]{LW2}), a quantization result for the normalized energy is obtained, 
namely $R^{-1}E(u,B_R) \to 4\pi k$ as $R \to +\infty$ for some $k \in\NN$, the case $k=1$ being valid both for the solution \req{GLsolutions} (see Proposition \ref{Radsol}) 
and the local minimizer constructed in Theorem \ref{existence}. The following result shows that the 
same property is true for any local minimizer of $E(\cdot)$ satisfying  \req{lingro}, so that any nonconstant local minimizer of $E(\cdot)$ satisfying \req{lingro}
realizes the lowest energy quantization level. 

\begin{theorem}
\label{quantization}
Let $u\in H^1_{\rm loc}(\R^3;\R^3)$ be a nonconstant local minimizer of $E(\cdot)$ satisfying \req{lingro}. 
Then  $R^{-1}E(u,B_R)\to 4\pi$ as $R \to+ \infty$ and the scaled maps $\{ u_R \}_{R>0}$ are relatively 
compact in $H^1_{\rm{loc}}(\R^3;\R^3)$. 
\end{theorem}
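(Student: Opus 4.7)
\emph{Proof plan.} The plan is to perform a blow-down analysis at infinity, observe that blow-down limits inherit local minimality, and invoke the classification of minimizing $0$-homogeneous $\mathbb{S}^2$-valued harmonic maps to identify the limit. By the monotonicity formula \req{monotonicity} and the linear growth assumption \req{lingro}, $R\mapsto R^{-1}E(u,B_R)$ is non-decreasing and bounded, so the limit $\Theta_\infty := \lim_{R\to+\infty} R^{-1}E(u,B_R)$ exists and is finite; the nonconstancy of $u$ together with monotonicity forces $\Theta_\infty > 0$. It suffices to show that every blow-down sequence has a strongly $H^1_{\rm loc}$-convergent subsequence whose limit has normalized energy $4\pi$.

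Fix any sequence $R_n \to +\infty$ and let $u_n(x) := u_{R_n}(x) = u(R_n x)$, which solves \req{GLresc} with $R = R_n$. The change of variables shows that $u_n$ is a local minimizer of $E_{R_n}(\cdot, \Omega)$ on every bounded $\Omega \subset \R^3$, and the scaling identity $E_{R_n}(u_n, B_\rho) = R_n^{-1}E(u, B_{R_n\rho})$ combined with \req{lingro} bounds $\{u_n\}$ in $H^1_{\rm loc}(\R^3;\R^3)$. Moreover $\int_{B_\rho}(1-|u_n|^2)^2\,dx \leq C\rho/R_n^2 \to 0$, so after extraction $u_n \rightharpoonup u_\infty$ weakly in $H^1_{\rm loc}$ with $u_\infty \in H^1_{\rm loc}(\R^3;\mathbb{S}^2)$. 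The key step is to show that $u_\infty$ is a \emph{minimizing} $\mathbb{S}^2$-valued harmonic map: for any competitor $v \in H^1(B_\rho;\mathbb{S}^2)$ with $v - u_\infty \in H^1_0(B_\rho;\R^3)$, the Luckhaus interpolation lemma produces admissible competitors $\tilde v_n$ for $u_n$ that coincide with $u_n$ on $\partial B_\rho$, interpolate smoothly to $v$ on a thin shell, and satisfy $\limsup_n E_{R_n}(\tilde v_n, B_\rho) \leq E_\infty(v, B_\rho)$; combining the minimality of $u_n$ with weak lower semicontinuity then yields $E_\infty(u_\infty, B_\rho) \leq E_\infty(v, B_\rho)$. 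Passing \req{monotonicity} to the limit makes $r \mapsto r^{-1}E_\infty(u_\infty, B_r)$ constant in $r$, so $u_\infty$ is positively $0$-homogeneous.

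Applying the bridging above with $v = u_\infty$ yields the energy identity $\lim_n E_{R_n}(u_n, B_\rho) = E_\infty(u_\infty, B_\rho) = \rho\,\Theta_\infty > 0$, so $u_\infty$ is nonconstant. By the Brezis-Coron-Lieb classification of minimizing $0$-homogeneous $\mathbb{S}^2$-valued harmonic maps on $\R^3$, there exists $T\in O(3)$ such that $u_\infty(x) = T(x/|x|)$, whence $E_\infty(u_\infty, B_\rho) = 4\pi\rho$ and $\Theta_\infty = 4\pi$. The same energy identity combined with weak convergence upgrades to strong $H^1(B_\rho)$ convergence, and a diagonal argument in $\rho$ gives strong $H^1_{\rm loc}$ convergence along the subsequence; the arbitrariness of $R_n$ yields the asserted relative compactness of $\{u_R\}_{R>0}$. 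The main obstacle is the persistence of minimality under blow-down: since the traces of $u_n$ on $\partial B_\rho$ are only approximately $\mathbb{S}^2$-valued while the competitors $v$ are genuinely $\mathbb{S}^2$-valued, the Luckhaus bridging construction is delicate and crucially exploits the vanishing of the potential contribution in the limit.
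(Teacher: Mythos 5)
Your strategy---show every blow-down limit inherits minimality via a Luckhaus-type bridging, then invoke the Brezis--Coron--Lieb classification---is a natural one, but it is genuinely different from the paper's and has a gap exactly at the step you flag as ``delicate.'' The paper's proof proceeds in two separate stages. First (Proposition~\ref{propcomp}) it kills the defect measure $\nu$ from Proposition~\ref{descripblowdown} by a dipole-removal comparison: near a concentration ray $\overline{OP_1}$ it replaces $u_n$, only inside a small cylinder $T_{2\delta}$, by a map constantly equal to $\phi(P_1)$ in a middle region, with two conical caps $C_n^\pm$ bridging to the slices of $u_n$ on good cross-sections $\{x_1=r_n^\pm\}$. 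This modification costs $O(\delta)+O_\delta(\sigma)$ but saves the full mass $4\pi k_1$ of $\nu$ along the removed segment, forcing $k_1=0$. Crucially, no matching of $u_n$ on the boundary of a ball is needed, and no control on $|u_n|$ away from zero is needed where the competitor differs from $u_n$. Only after $\nu\equiv 0$ is established does the paper prove the tangent map $\phi$ is minimizing (Corollary~\ref{strtangmap}, Step~1), and the explicit three-layer interpolation there relies essentially on the now-available uniform $C^1$ convergence $u_n\to\phi$ on the annulus $K=\overline{B_1}\setminus B_{1/2}$ (so that $|u_n|\ge 1/2$ and the polar decomposition $u_n/|u_n|$ is well-defined and Lipschitz there) and on $R_n^2\int_{B_1}(1-|u_n|^2)^2\to 0$.

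The gap in your argument: the Luckhaus interpolation lemma as usually stated applies to maps with values in (a tube around) the target manifold, and the crucial step is to control $\operatorname{dist}(\tilde v_n,\mathbb{S}^2)$ on the interpolation shell so that one may project to $\mathbb{S}^2$ and kill the potential. But your competitor must agree with $u_n$ on $\partial B_\rho$, and before $\nu\equiv 0$ is known you only control $R_n^2\int_{\partial B_\rho}(1-|u_n|^2)^2\,d\mathcal{H}^2\le C$ (bounded, not small) and $\|u_n-u_\infty\|_{L^2(\partial B_\rho)}\to 0$ for good $\rho$; you do not have $|u_n|\ge 1/2$ on $\partial B_\rho$, so the polar decomposition $u_n/|u_n|$ can blow up. Even the simple linear interpolation $w_n=(1-t)u_n+t\,u_\infty$ gives $1-|w_n|^2=(1-t)(1-|u_n|^2)+t(1-t)|u_n-u_\infty|^2$, so the potential on a shell of thickness $\lambda\rho$ is bounded by $C\lambda\rho\,R_n^2\big(\|1-|u_n|^2\|_{L^2(\partial B_\rho)}^2+\|u_n-u_\infty\|_{L^4(\partial B_\rho)}^4\big)$; the first summand is $O(\lambda)$ which is fine, but the second requires $\|u_n-u_\infty\|_{L^4(\partial B_\rho)}=o(R_n^{-1/2})$, which you have no way to guarantee. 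A genuine Ginzburg--Landau analogue of the Luckhaus lemma in the presence of possible energy concentration on the bridging sphere would resolve this, but it is a nontrivial adaptation and is precisely the technical difficulty the paper's structure is designed to sidestep. If you insert the paper's dipole-removal step (Proposition~\ref{propcomp}) before the bridging, your Luckhaus argument then essentially reduces to the paper's Corollary~\ref{strtangmap} Step~1, where the uniform convergence makes the interpolation elementary.

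A secondary remark: the claim that the blow-down limit is positively $0$-homogeneous because ``$r\mapsto r^{-1}E_\infty(u_\infty,B_r)$ is constant'' needs the energy identity along dyadic blow-downs, which again is entangled with strong convergence; in the paper this comes for free from Proposition~\ref{descripblowdown} (Lin--Wang), and you would do well to cite that result directly rather than re-derive homogeneity.
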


In proving this theorem, the first step is to apply the  blow-down analysis from infinity given in \cite{LW2}. 
Then, taking minimality into account, we exclude concentration by a comparison argument involving a ``dipole removing technique". 
This yields the compactness of the scaled maps. Finally another comparison argument based on minimality and on the results in \cite{BCL}, gives 
the desired value for the limit of the scaled energy. Here we believe that (as shown in \cite{Sa} for the 2D case) assumption \req{lingro} should always hold, as a consequence of local minimality.
\vskip5pt

In order to prove full symmetry of a nonconstant local minimizer, a natural approach is to prove uniqueness and symmetry of the tangent map at infinity, 
and then try to propagate the symmetry from infinity to the entire space. 
As a first step in this direction, we have the following result  
inspired by the asymptotic analysis developed for harmonic maps at isolated singularities 
in the important work \cite{Si1} (see also \cite{Si2},  \cite{Je} for a possibly simplified treatment and a more 
comprehensive exposition on the subject, and \cite{GW} for the case of $\mathbb{S}^2$-valued harmonic maps in $\mathbb{R}^3$). 

\begin{theorem}
\label{asymmetry}
Let $u$ be an entire smooth solution of \req{GL} satisfying  \req{lingro} and such that the scaled maps $\{u_R\}_{R>0}$  are relatively compact in $H^1_{\rm{loc}}(\R^3;\R^3)$.  
Then there exist a constant $C>0$ such that for all $x\in \mathbb{R}^3$, 
\begin{equation}
\label{simonbound}
|x|^2 (1-|u(x)|^2)+|x||\nabla u(x)|+|x|^3 |\nabla (1-|u(x)|^2)|+|x|^2 |\nabla^2 u(x)| \leq C \, ,
\end{equation}
and there exists a unique harmonic map $\omega:\mathbb{S}^2 \to \mathbb{S}^2$  such that ${\rm deg} \, \omega={\rm deg}_\infty u$ and 
setting $u_\infty(x)=\omega(x/|x|)$, 
 \begin{itemize}
 \item[(i)] $\| {u_R}_{|\mathbb{S}^2}-\omega \|_{C^2(\mathbb{S}^2;\mathbb{R}^3)}\to 0$ as $R\to +\infty\,$, \\
 \item[(ii)] $e_R(u_R)(x)dx \overset{*}{\rightharpoonup} \frac12 |\nabla u_\infty|^2dx $ weakly* as  measures as $R \to +\infty\,$.
 \end{itemize}
  If in addition ${\rm deg}_{\infty} u=\pm 1$ then $\omega(x)=Tx$ for some $T\in O(3)$.
\end{theorem}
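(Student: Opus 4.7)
The approach is to implement, in the Ginzburg-Landau framework, L.~Simon's asymptotic analysis at an isolated singularity \cite{Si1}, interpreting the blow-down $R\to\infty$ as a singularity at infinity and the non-$\mathbb{S}^2$-valuedness of $u_R$ as a perturbation that decays quantitatively with $R$. The first prerequisite is to promote the $H^1_{\rm loc}$-compactness to $C^k$-compactness away from the origin: applying Lin-Wang $\varepsilon$-regularity to the rescaled equation \eqref{GLresc} on a fixed annulus $A=B_2\setminus B_{1/2}$ and passing to a subsequence gives $u_\infty(x)=\omega(x/|x|)$ smooth on $A$ with $\omega:\mathbb{S}^2\to\mathbb{S}^2$ harmonic, and in particular $|u_R|\to 1$ uniformly on $A$. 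Reading the PDE $\Delta u_R=-R^2u_R(1-|u_R|^2)$ together with the uniform $C^2$-bounds on $u_R$ forces $R^2(1-|u_R|^2)=O(1)$ on $A$, and a Schauder estimate for the scalar equation
\begin{equation*}
\Delta(1-|u_R|^2)=2|\nabla u_R|^2-2R^2|u_R|^2(1-|u_R|^2)
\end{equation*}
controls $R^2|\nabla_y(1-|u_R|^2)|$. Unscaling through $x=Ry$ converts these uniform bounds into \eqref{simonbound}.

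With \eqref{simonbound} in hand, the boundary traces $\omega_R:=u_R|_{\mathbb{S}^2}$ form a $C^2$-precompact family whose cluster points are smooth harmonic maps $\mathbb{S}^2\to\mathbb{S}^2$ of degree $\deg_\infty u$, and the weak-$*$ convergence of energy densities (ii) follows from strong $H^1_{\rm loc}$-convergence combined with \eqref{simonbound}, which makes $R^2(1-|u_R|^2)^2\to 0$ in $L^1_{\rm loc}$. To promote subsequential convergence to uniqueness of $\omega$, hence to (i), I would invoke the infinite-dimensional \L ojasiewicz-Simon inequality for the analytic Dirichlet functional on $C^2$-maps into $\mathbb{S}^2$. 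The key input is the monotonicity identity \eqref{monotonicity} rewritten as
\begin{equation*}
\int_r^R\frac{1}{t}\|\partial_\rho u\|_{L^2(\partial B_t)}^2\,dt+\frac12\int_r^R\frac{1}{t^2}\int_{B_t}(1-|u|^2)^2\,dx\,dt=\frac{E(u,B_R)}{R}-\frac{E(u,B_r)}{r}\,;
\end{equation*}
setting $\tau=\log t$, the rescaled trace $\omega_\tau$ is an approximate $L^2$-gradient flow for the Dirichlet energy on $\mathbb{S}^2$-valued maps with a perturbation controlled by the potential term, which is $o(1)$ thanks to \eqref{simonbound}. The \L ojasiewicz inequality then closes a Gronwall-type argument and rules out more than one cluster point.

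For the final assertion, once $\omega$ is unique and of degree $\pm 1$, the Eells-Wood theorem gives that $\omega$ is $\pm$-holomorphic, hence a M\"obius transformation of $\mathbb{S}^2$. To reduce the six-dimensional M\"obius group to the three-dimensional isometry group $O(3)$, I would appeal to the Brezis-Coron-Lieb classification \cite{BCL} of minimal $0$-homogeneous $\mathbb{S}^2$-valued tangent maps with a single point singularity of degree $\pm 1$, combined with the normalization $R^{-1}E(u,B_R)\to 4\pi$ coming from (ii), to conclude that $\omega(x)=Tx$ for some $T\in O(3)$.

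The main technical obstacle is the \L ojasiewicz-Simon step, which must simultaneously address (a) the analyticity and Fredholm structure of the harmonic map equation at a degree $\pm 1$ harmonic map, where the moduli space of nearby harmonic maps is non-trivial (the six-dimensional M\"obius group), so that the exponent in the \L ojasiewicz inequality can be identified; and (b) the quantitative absorption of the Ginzburg-Landau correction $R^2(1-|u_R|^2)^2$ into the error term uniformly along the dilation parameter, using \eqref{simonbound} to supply the required decay.
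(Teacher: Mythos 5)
Your plan diverges from the paper's proof in two places, one of which is a genuine methodological choice and one of which is a gap.

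For the uniqueness of $\omega$ and claim \emph{(i)}, you propose the \L ojasiewicz--Simon route, following \cite{Si1} and \cite{GW}. The paper explicitly avoids this: instead, after establishing \eqref{simonbound} and writing $u=\rho w$ near infinity, it multiplies the equation for $w$ by $\partial w/\partial r$ to obtain a Pohozaev-type divergence identity, integrates it over annuli, and --- this is the decisive point --- uses that any harmonic $\omega:\mathbb{S}^2\to\mathbb{S}^2$ is $\pm$-holomorphic, hence \emph{energy minimizing in its own homotopy class}, so that $\int_{\mathbb{S}^2}|\nabla_T\omega|^2\le\int_{\mathbb{S}^2}|\nabla_T w_R|^2$ for all large $R$. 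This sign gives a monotone quantity $R^2\int_{\{|x|>R\}}\frac{1}{|x|}|\partial_r w|^2$ whose derivative is controlled by the $\mathcal{O}(|x|^{-5})$ error, yielding the decay $\int_{\{|x|>R\}}\frac{1}{|x|}|\partial_r u|^2\le C\log R/R^2$; dyadic summation then gives Cauchyness of $u_R|_{\mathbb{S}^2}$ in $L^2(\mathbb{S}^2)$, and $C^2$-precompactness upgrades this to \emph{(i)}. Your route is viable in principle --- the \L ojasiewicz exponent at a degree $\pm1$ harmonic map of $\mathbb{S}^2$ is the optimal $1/2$ by integrability of Jacobi fields, as in \cite{GW} --- but it is considerably heavier machinery, and the paper's argument exploits a special feature (homotopy minimality of $\mathbb{S}^2\to\mathbb{S}^2$ harmonic maps) that makes the whole analyticity/Fredholm apparatus unnecessary. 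Your bootstrap for the higher-order bounds via the scalar equation for $1-|u_R|^2$ is an alternative to the paper's BBH-style polar-decomposition bootstrap, and both are workable.

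The gap is in your final step. Eells--Wood correctly reduces $\omega$ to a M\"obius map, but the tools you propose cannot force it to be an isometry. The normalization $R^{-1}E(u,B_R)\to 4\pi$ is automatic for \emph{every} degree $\pm1$ conformal map of $\mathbb{S}^2$ (conformality means energy equals area, $=4\pi|\deg\omega|$), so it does not distinguish isometries inside the six-dimensional M\"obius group. Moreover, appealing to the Brezis--Coron--Lieb classification of \emph{minimizing} tangent maps is not available here: Theorem~\ref{asymmetry} assumes only that $u$ solves \eqref{GL} with compact scaled maps, not that it is locally minimizing. The constraint that actually does the work is the balancing condition from Proposition~\ref{descripblowdown}\emph{(ii)}, namely $\int_{\mathbb{S}^2}x\,|\nabla_T\omega|^2\,d\mathcal{H}^2=0$ once the defect measure vanishes; this is a \emph{stationarity} consequence of the Lin--Wang blow-down, not a minimality statement. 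A M\"obius map that is not an isometry concentrates its energy density near a point of $\mathbb{S}^2$ and fails this balance, so among degree $\pm1$ conformal maps only $\omega(x)=Tx$ survives (the argument in the proof of \cite[Theorem~7.3]{BCL}). You should replace the appeal to minimality with this balancing argument.
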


This result strongly relies on the a priori 
bound \req{simonbound} for entire solutions to \req{GL} which, loosely speaking, do not exibit any bubbling phenomena at infinity (more precisely 
the scaled maps $\{u_R\}$ do not exibit energy concentration as $R\to+ \infty$).  
Whenever \eqref{simonbound} holds, we can write for $|x|$ sufficiently large  
the polar decomposition of the solution $u$ as $u(x)=\rho(x) w(x)$ for some positive function $\rho$ and some $\mathbb{S}^2$-valued map $w$ which have to solve the system
\begin{equation}
\label{polarsystem}
\begin{cases}
{\rm div} ( \rho^2(x) \nabla w(x) )+w(x) \rho^2(x) |\nabla w(x)|^2=0\,, \\
\Delta \rho (x)+\rho(x) (1-\rho^2(x))=\rho(x) |\nabla w(x)|^2\,,
\end{cases}
\end{equation}
for $|x|$ large. It is clear from \eqref{simonbound} that $\rho$ smoothly tends  to $1$ at infinity. Hence the unit map $w$ tends to be harmonic as $|x|\to +\infty$, and  
system \req{polarsystem} can be considered as a perturbation of the harmonic map system. In the present situation, uniqueness of the asymptotic limit can be obtained 
from an elementary but tricky  estimate on the radial derivative of $ w$, and we avoid the use of the Simon-Lojasievicz inequality.
\vskip5pt

Once the asymptotic symmetry is obtained we can adapt the division method used in \cite{M2} and \cite{R} to get full symmetry. The main result of the paper is the following.

\begin{theorem}
\label{SYMMETRY}
Let $u$ be an entire solution of \req{GL}. The following conditions are equivalent:
\begin{itemize}
\item[${(i)}$] $u$ is a nonconstant local minimizer of  $E(\cdot)$ satisfying \req{lingro};
\vskip5pt

\item[$(ii)$] $E(u,B_R)=4\pi R +o(R)$ as $R \to+\infty$;
\vskip5pt

\item[$(iii)$] $u$ satisfies $|u(x)|=1+\mathcal{O}(|x|^{-2})$ as $|x|\to +\infty$ and  ${\rm deg}_\infty  u=\pm 1$;
\vskip5pt 

\item[$(iv)$] up to a translation on the domain and an orthogonal transformation on the image, $u$ is $O(3)$-equivariant, i.e.,  $u=U$ as given by \req{GLsolutions}.  
\end{itemize}

\end{theorem}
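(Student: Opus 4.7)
I would prove Theorem \ref{SYMMETRY} through the cyclic chain $(i)\Rightarrow(ii)\Rightarrow(iii)\Rightarrow(iv)\Rightarrow(i)$, with $(iii)\Rightarrow(iv)$ as the main technical step.

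The implication $(i)\Rightarrow(ii)$ is exactly Theorem \ref{quantization}. For $(ii)\Rightarrow(iii)$, the plan is to first note that \req{lingro} follows from $(ii)$ together with the monotonicity formula, so the blow-down analysis of Lin--Wang \cite{LW1,LW2} applies: any subsequential weak limit of $\{u_R\}$ has the form $u_\infty(x)=\omega(x/|x|)$ with $\omega:\mathbb{S}^2\to\mathbb{S}^2$ harmonic, together with a defect measure supported on a $1$-rectifiable cone whose line-densities are positive multiples of $4\pi$. Since the normalized energy at infinity is pinned at the lowest level $4\pi$, the lower bound $\frac12\int_{\mathbb{S}^2}|\nabla\omega|^2\geq 4\pi$ for nonconstant harmonic maps $\mathbb{S}^2\to\mathbb{S}^2$ (coupled with an $\varepsilon$-regularity argument excluding the case of a constant tangent map plus nontrivial defect) forces the defect to vanish and $\omega$ to saturate the bound; by the rigidity in \cite{BCL}, $\omega$ is an isometry, so $\deg_\infty u=\deg\omega=\pm 1$. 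The absence of defect delivers relative compactness of $\{u_R\}$ in $H^1_{\rm loc}(\R^3;\R^3)$, Theorem \ref{asymmetry} applies, and the first term in \req{simonbound} gives $|u(x)|=1+\mathcal{O}(|x|^{-2})$.

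The heart of the argument is $(iii)\Rightarrow(iv)$. Condition $(iii)$ makes the potential part of $E(u,B_R)$ globally bounded, and rescaling \req{GL} at scale $|x|$ with interior elliptic regularity (using $|u|\leq 1$) yields $|\nabla u(x)|\leq C|x|^{-1}$, so \req{lingro} holds and $\{u_R\}$ is compact. Theorem \ref{asymmetry} then identifies a unique tangent map $\omega(x)=Tx$, $T\in O(3)$. Composing on the image with $T^{-1}$ and translating so that a zero of $u$ (which exists since $\deg_\infty u\neq 0$) is placed at the origin, we may assume $u_R|_{\mathbb{S}^2}\to\mathrm{Id}$ in $C^2$. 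I would then adapt the division method of \cite{M2,R}: writing $u=\rho w$ outside the zero set of $u$ and using the coupled system \req{polarsystem}, introduce a smooth one-parameter family $\{S_t\}_{t\in\R}\subset O(3)$ of rotations about a fixed axis, and consider $X(x):=\frac{d}{dt}\big|_{t=0}S_t^{-1}u(S_tx)$. The field $X$ satisfies a linear elliptic equation obtained by linearizing \req{GL} at $u$; the asymptotic symmetry together with \req{simonbound} forces $X$ to decay at infinity, and $u(0)=0$ forces $X(0)=0$. A maximum-principle/unique-continuation argument then gives $X\equiv 0$, i.e., $u(S_tx)=S_tu(x)$ for all $t$. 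Running this for an arbitrary axis yields $O(3)$-equivariance of $u$, and the uniqueness stated in Remark~\ref{equivariance} gives $u=U$.

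Finally, $(iv)\Rightarrow(i)$ comes for free: Theorem \ref{existence} produces at least one nonconstant local minimizer $u^{\star}$ satisfying \req{lingro}, and the chain $(i)\Rightarrow(ii)\Rightarrow(iii)\Rightarrow(iv)$ already established, applied to $u^\star$, identifies it up to translation and orthogonal transformation with the equivariant solution \req{GLsolutions}. Since $E(\cdot)$ is invariant under both operations, every $u$ as in $(iv)$ is a local minimizer with \req{lingro}. I expect the main obstacle to be the division step in $(iii)\Rightarrow(iv)$: the complex-analytic tools available in the two-dimensional arguments of \cite{M2,R} have no counterpart in $\R^3$, so the whole analysis must be carried out on the real system \req{polarsystem}, with the linearized operator controlled uniformly both near the origin (the unique zero of $u$) and at infinity (where \req{simonbound} is barely sufficient to propagate the $O(3)$-equivariance).
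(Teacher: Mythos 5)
Your cyclic structure $(i)\Rightarrow(ii)\Rightarrow(iii)\Rightarrow(iv)\Rightarrow(i)$ matches the paper, and the implications $(i)\Rightarrow(ii)$ (Theorem~\ref{quantization}), $(ii)\Rightarrow(iii)$, and $(iv)\Rightarrow(i)$ are essentially the paper's arguments. One minor imprecision in $(ii)\Rightarrow(iii)$: the case ``constant tangent map plus a single defect ray with $k_1=1$'' is not excluded by an $\eps$-regularity argument but by the balancing condition $\frac12\int_{\mathbb{S}^2}x|\nabla\phi|^2\,d\mathcal{H}^2+4\pi\sum k_jP_j=0$ from Proposition~\ref{descripblowdown}~(ii), which would degenerate to $4\pi P_1=0$; likewise ``$\omega$ is an isometry'' does not follow from degree~$\pm1$ alone (there are non-isometric degree-one rational maps), but again from the balancing condition combined with the \cite{BCL} argument.

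The real problem is your $(iii)\Rightarrow(iv)$, which is not the division method of \cite{M2,R} despite being labelled as such, and which I do not believe goes through. What you propose is an infinitesimal-symmetry argument: set $X(x)=\tfrac{d}{dt}\big|_{t=0}S_t^{-1}u(S_tx)$, observe $X$ solves the linearization of \eqref{GL}, and conclude $X\equiv0$ via ``maximum principle / unique continuation'' using decay of $X$ at infinity and $X(0)=0$. The linearized operator is the vector-valued operator $\Delta+(1-|u|^2)\mathrm{I}-2\,u\otimes u$, a coupled elliptic \emph{system} for which no maximum principle is available. Unique continuation from infinity for such a system requires super-polynomial decay (Carleman / Meshkov-type hypotheses), but \eqref{simonbound} and the $C^2(\mathbb{S}^2)$ convergence to $\mathrm{Id}$ give only algebraic decay of $X$. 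And $X(0)=0$ is far from infinite-order vanishing, so unique continuation from the origin does not apply either. In effect, your scheme would require a Liouville/nondegeneracy theorem for the linearization of \eqref{GL} at $U$, which is precisely the hard statement that the division trick is designed to bypass.

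What the paper actually does is quite different and avoids linearization entirely. After normalizing ${\rm deg}_\infty u=1$, $u(0)=0$, and $u_R|_{\mathbb{S}^2}\to\mathrm{Id}$ in $C^2$, one divides $u$ by the scalar radial profile $f(|x|)$ of Lemma~\ref{radode}, setting $v(x)=u(x)/f(|x|)$. Using that $u$ solves \eqref{GL} and $f$ solves \eqref{cauchypb}, one multiplies the resulting equation for $v$ by $\partial v/\partial r$ to obtain the pointwise Pohozaev-type identity \eqref{phozaevforv}: the left side is a manifestly nonnegative expression in $|\partial v/\partial r|^2$ and $(1-|v|^2)^2$ (because $f,f'>0$), and the right side is a divergence. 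Then one verifies, using the asymptotics \eqref{asymptorigin} at the origin (from $u(0)=0$, $f'(0)>0$) and \eqref{asymptinfinity} at infinity (from $(iii)$ and Theorem~\ref{asymmetry}), that the flux of $\Phi$ through $\{|x|=R\}$ and $\{|x|=\delta\}$ both converge to $4\pi$, so the total integral of $\mathrm{div}\,\Phi$ vanishes. This forces $|v|\equiv1$ and $\partial v/\partial r\equiv0$, hence $v(x)=x/|x|$ and $u=U$. You should replace your linearization/unique-continuation step with this nonnegative divergence identity; that is the genuine content of the ``division method'' in this problem.
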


As a consequence of this theorem, we see that under the assumption \req{lingro}, up to translations and orthogonal transformations,  any nonconstant local minimizer of 
$E_\lambda(\cdot)$ in $H^1_{\rm{loc}}(\mathbb{R}^3;\mathbb{R}^3)$ is given by $u(x)=U(\lambda x)$ with $U$ as in \req{GLsolutions}. 
In the limiting case $\lambda=+\infty$, a similar result has been proved in \cite[Theorem 2.2]{AL} showing that 
any nonconstant local minimizer $u$ of the Dirichlet integral $E_\infty(\cdot)$ in $H^1_{\rm{loc}}(\mathbb{R}^3;\mathbb{S}^2)$ is given by $u(x)=x/|x|$ 
up to translations and orthogonal transformations.
\vskip10pt

The plan of the paper is the following. In Section 2 we review the properties of the equivariant solution \req{GLsolutions}. 
In Section 3 we study minimizing solutions to $(P_\lambda)$ and prove Theorem \ref{existence}. 
In Section 4 we prove the quantization property for an arbitrary local minimizer, {\it i.e.}, we prove Theorem \ref{quantization}. 
In Section 5 we deal with asymptotic symmetry and Theorem \ref{asymmetry}. Finally  we obtain in Section 6 the full symmetry and the main result of the paper.

%%%%%%%%%%%%%%%%%%%%%%%%%%%%%%%%%%%%%%%%%%%%%%%%%%%%%%%%%%%%%%%%%%%%%%%%%%%%%%%%%%%%%
%%%%%%%%%%%%%%%%%%%%%%%%%%%%%%%%%%%%%%%%%%%%%%%%%%%%%%%%%%%%%%%%%%%%%%%%%%%%%%%%%%%%%

\section{The equivariant solution}\label{radsol}

In this section we collect some preliminary results  about equivariant entire solutions.
The existence statement and the qualitative study are essentially contained in \cite{AF,FG} and \cite{G}. In the following lemma we stress the asymptotic decay at infinity. 
\begin{lemma}
\label{radode}
There is a unique  solution $f \in C^2([0,+\infty))$ of 
\begin{equation}\label{cauchypb}
\begin{cases}
\ds f^{\prime \prime}+\frac{2}{r} f^\prime -\frac{2}{r^2} f +f(1-f^2)=0 \,,\\[8pt]
f(0)=0 \;\;\text{and}\;\; f(+\infty)=1\,.
\end{cases}
\end{equation}
In addition $0<f(r)<1$ for each $r>0$, $f^\prime(0)>0$, $f$ is strictly increasing,
\begin{equation}
\label{odedec}
R^2 |f^{\prime\prime}(R)|+R f^\prime(R)+\left| 2-R^2(1-f(R)^2)\right|=o(1) \quad \hbox{as} \quad R \to+\infty \, ,
\end{equation}
and 
\begin{equation}
\label{radendec}
\frac{1}{R}\int_0^R\bigg( \frac{r^2}{2}(f^\prime)^2+f^2 +r^2\frac{(1-f^2)^2}{4}\bigg) dr \to 1 \quad \hbox{as} \quad R \to +\infty \, .
\end{equation}
\end{lemma}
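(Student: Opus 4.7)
The existence of $f \in C^2([0,+\infty))$ solving \eqref{cauchypb}, together with the qualitative properties $0 < f(r) < 1$ for $r > 0$, $f'(0) > 0$, strict monotonicity, and uniqueness, is classical and established in \cite{AF,FG,G} via a shooting argument in the phase plane combined with a variational characterization of $f$ as the unique equivariant minimizer on balls. The substantive content of the lemma is therefore the sharp decay \eqref{odedec} and the resulting energy asymptotics \eqref{radendec}, which I would derive as follows.

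Writing $\phi(r) := f(r) - 1$, so that $\phi \in (-1,0)$ and $\phi(r) \to 0^-$, the ODE rewrites as
\begin{equation*}
L\phi := \phi'' + \frac{2}{r}\phi' - \Big(2 + \frac{2}{r^2}\Big)\phi = \frac{2}{r^2} + 3\phi^2 + \phi^3.
\end{equation*}
The linear operator $L$ has negative zeroth-order coefficient, hence satisfies a maximum principle on half-lines: any $\psi$ with $L\psi \geq 0$ on $[R_0,+\infty)$, $\psi(R_0)\leq 0$, and $\psi(r)\to 0$ at infinity is nonpositive. A direct computation gives $L(-a/r^2) = 2a/r^2$ identically. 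After a preliminary bootstrap (using elementary integration of the ODE against test functions to produce a rough polynomial rate sufficient to absorb the quadratic error $3\phi^2+\phi^3$ into the source $2/r^2$), comparison with barriers $-a/r^2$ yields two-sided bounds $-C_1/r^2 \leq \phi(r) \leq -C_2/r^2$ for some constants $0 < C_2 \leq C_1$ and $r$ large. In particular $1-f^2(r)$ is comparable to $r^{-2}$.

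To extract the precise leading constant, set $G(r) := r^2(1-f^2(r))$ and use the integrated identity
\begin{equation*}
(r^2 f'(r))' = 2f(r) - r^2 f(r)(1 - f^2(r)) = f(r)(2 - G(r)).
\end{equation*}
From the $O(r^{-2})$ estimate above together with the ODE, $r^2 f'(r)$ is bounded and tends to zero at infinity, and integration from $r$ to $+\infty$ yields
\begin{equation*}
r^2 f'(r) = \int_r^{+\infty} f(s)(G(s)-2)\,ds\,.
\end{equation*}
Combining this with a variation-of-parameters representation of $\phi$ relative to the linearization $\psi'' - 2\psi = 2/r^2$, whose homogeneous solutions are $e^{\pm\sqrt{2}\,r}/r$ and whose unique polynomially decaying particular solution is $\psi \sim -1/r^2$, gives $r^2\phi(r)\to -1$, equivalently $R^2(1-f^2(R)) \to 2$. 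The decay $Rf'(R)=o(1)$ then follows from the integral equation just displayed, and $R^2 f''(R)=o(1)$ from reading $f''$ algebraically off the ODE, which establishes \eqref{odedec}.

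For \eqref{radendec}, set $e(r) := \tfrac{r^2}{2}(f'(r))^2 + f(r)^2 + \tfrac{r^2}{4}(1-f^2(r))^2$. By \eqref{odedec}, $r^2(f'(r))^2 = (Rf'(R))^2 = o(1)$, $f(r)^2 \to 1$, and $r^2(1-f^2(r))^2 = G(r)^2/r^2 \to 0$, so $e(r)\to 1$ pointwise. Cesàro averaging delivers $R^{-1}\int_0^R e(r)\,dr \to 1$, which is \eqref{radendec}. The principal difficulty is extracting the exact constant $2$ in the leading-order decay of $1-f^2$: the max-principle barriers give boundedness and sign but not the precise value, and one must exploit that the exponentially growing/decaying homogeneous solutions to $L\psi = 0$ cannot contribute to the unique polynomially decaying asymptotic profile of order $r^{-2}$.
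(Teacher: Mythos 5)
Your derivation of \eqref{radendec} is actually cleaner than the paper's: once \eqref{odedec} is in hand, each summand in the integrand tends pointwise to $0$, $1$, $0$ respectively, all are locally bounded, and Ces\`aro averaging finishes. The paper instead multiplies the ODE by $r^{2}(1-f^{2})$ to obtain one more integral identity before averaging; your shortcut is correct and dispenses with that extra step.

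Your route to \eqref{odedec} is genuinely different from the paper's, and while the underlying idea (linearize about $f\equiv 1$, use maximum-principle barriers, then extract the constant $2$ by a perturbation argument) is sound in spirit, the execution has real gaps. First, the ``preliminary bootstrap'' needed before the quadratic error $3\phi^{2}+\phi^{3}$ can be dominated by the source $2/r^{2}$ is only gestured at; and in fact the barrier comparison as written only works cleanly for the upper bound on $\phi$ — for the lower barrier $-C_{1}/r^{2}\leq\phi$ the sign of $L(-C_{1}/r^{2}-\phi)$ is wrong, so a different test function is needed. Second, the claim that the homogeneous solutions of $L$ are $e^{\pm\sqrt{2}\,r}/r$ is not exact: under $\psi=g/r$ one gets $g''-(2+2/r^{2})g=0$, not $g''-2g=0$, so the asserted explicit form holds only asymptotically, and the variation-of-parameters identification $r^{2}\phi\to-1$ — which you yourself flag as the principal difficulty — is never actually carried out. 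Finally, the integral representation $r^{2}f'(r)=\int_{r}^{\infty}f(s)(G(s)-2)\,ds$ tacitly requires $r^{2}f'(r)\to 0$, which is stronger than $Rf'(R)\to 0$ and itself depends on the barrier step being in place, so there is some circularity in the ordering of the deductions.

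The paper avoids all of this with three elementary multiplier identities. Multiplying the ODE by $r^{2}f'$ and integrating yields $\int_{0}^{\infty}r(f')^{2}\,dr<\infty$ together with the existence of $\lim_{R\to\infty}Rf'(R)$, forcing $Rf'(R)\to 0$. Multiplying by $r^{2}$ and averaging over dilated intervals $(kR,R)$ with $k\in(0,1)$ sandwiches $\limsup R^{2}(1-f^{2}(R))$ and $\liminf R^{2}(1-f^{2}(kR))$ around $2$, whence $R^{2}(1-f^{2})\to 2$ by arbitrariness of $k$, and $R^{2}f''\to 0$ then drops out of the equation algebraically exactly as you say. This is shorter, entirely elementary, requires no a priori decay rate and no maximum principle, and gives the constant $2$ directly. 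Your approach, if completed, would likely yield more (a rate of convergence in \eqref{odedec}), but as written it does not yet constitute a proof of the constant.
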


\begin{proof}
The existence of an increasing solution follows from \cite{G} and \cite{AF}. To obtain the estimates at infinity in \eqref{odedec}, we multiply the equation by $r^2 f^\prime (r)$ and an integration by parts yields 
\begin{equation}\label{multeq}
\frac{R^2}{2}(f^\prime(R))^2+\int_0^R r(f^\prime(r))^2 dr+\int_0^R r^2(1-(f(r))^2)f(r)f^\prime(r)dr=(f(R))^2\leq 1 \, .
\end{equation}
Using the monotony of $f$, we deduce that $ \int_0^{+\infty} r(f^\prime(r))^2 dr<+\infty$. Hence we can find a sequence $R_n\to+\infty$ such that $R_n f^\prime(R_n)\to 0$ as $n \to +\infty$. 
On the other hand the integral terms in \eqref{multeq}  admit a limit as $R \to +\infty$. As a consequence, $rf^\prime(r)$ admits a limit at infinity 
and thus $Rf^\prime(R)\to 0$ as $R\to +\infty$. For any  $k \in (0,1)$ fixed, multiplying the equation by $r^2$ and averaging over $(kR,R)$ leads to 
$$\frac{R^2f^\prime(R)-k^2R^2f^\prime(kR)}{(1-k)R}+\frac{1}{(1-k)R}\int_{kR}^R f(r)r^2(1-(f(r))^2)dr=\frac{2}{(1-k)R}\int_{kR}^R f(r)dr \,. $$
Since $f$ is increasing and tends to $1$ at infinity, we infer   
$$ k^2 \limsup_{R \to+ \infty} R^2 (1-(f(R))^2) \leq 2 \leq \liminf_{R\to +\infty} R^2 (1-(f(kR))^2) \, , $$
so that $R^2 (1-(f(R))^2) \to 2$ as $R \to +\infty$ by arbitrariness of $k$. Taking the equation into account  \req{odedec} follows. 
To prove \req{radendec} we multiply the equation by $r^2(1-f^2)$ and we integrate by parts on $(0,R)$ to get
$$R^2(1-(f(R))^2)f^\prime(R)+2\int_0^Rr^2f(f^\prime)^2dr+\int_0^R r^2f(1-f^2)^2dr=2\int_0^Rf(1-f^2)dr \, .$$
Since $f$ is increasing and tends to $1$ at infinity, we deduce using \req{odedec} that 
$$\frac{1}{R}\int_0^R r^2(1-f^2)^2dr+\frac{1}{R}\int_0^R 2r^2 (f^\prime)^2dr +R^2 (1-(f(R))^2)f^\prime(R) \to 0 \,,$$
and \req{radendec} follows easily.
\end{proof}

A consequence of the previous lemma is the following result.

\begin{proposition}
\label{Radsol}
Let $x_0 \in \mathbb{R}^3$ and $T \in O(3)$. Consider the function $f:[0,+\infty) \to [0,1)$ given by Lemma~\ref{radode}  and define 
$$w(x):= \frac{T(x-x_0)}{|x-x_0|} f(|x-x_0|)\,.$$ 
Then 
$w$ is a smooth solution of \eqref{GL}. In addition, $0<|w(x)|<1$ for each $x \neq x_0$, $w$ satisfies \req{simonbound} and  
\begin{equation}
\label{radscaledenergy}
 \lim_{R \to +\infty} \frac{1}{R} \int_{B_R(x_0)} \left( \frac{1}{2}|\nabla w(x)|^2+\frac{(1-|w(x)|^2)^2}{4}\right) dx = 4\pi  \, .
\end{equation} 
\end{proposition}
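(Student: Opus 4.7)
First, by translation and rotation invariance of \eqref{GL}, it suffices to treat $x_0=0$ and $T=\mathrm{Id}$, so one writes $w(x)=F(|x|)x$ with $F(r):=f(r)/r$. The identity $\Delta(F(|x|)x_i)=x_i(F''(r)+4F'(r)/r)$ valid in $\mathbb{R}^3$, combined with $F=f/r$, yields after a short computation
\[
\Delta w(x)+w(x)(1-|w(x)|^2) = \frac{x}{|x|}\left[f''(r)+\frac{2}{r}f'(r)-\frac{2}{r^2}f(r)+f(r)(1-f(r)^2)\right],
\]
which vanishes for every $r>0$ by the ODE in \eqref{cauchypb} and extends to $x=0$ by continuity. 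Smoothness at the origin follows from the regular-singular structure of \eqref{cauchypb} at $r=0$, which forces $f$ to admit a Taylor expansion in odd powers, $f(r)=f'(0)\,r+a_3 r^3+\cdots$, so that $F=f/r$ extends smoothly to $[0,+\infty)$ and $w(x)=F(|x|)\,x$ is of class $C^\infty$. The strict inequality $0<|w(x)|<1$ for $x\neq 0$ is immediate from $0<f(r)<1$ in Lemma \ref{radode}.

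\medskip

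For the a priori bound \eqref{simonbound}, a direct expansion yields $|\nabla w|^2=(f'(r))^2+2f(r)^2/r^2$ and, pointwise, $|\nabla^2 w|\leq C(|f''(r)|+|f'(r)|/r+|f(r)|/r^2)$. Combined with $1-|w|^2=1-f(r)^2$ and the asymptotics in \eqref{odedec}, the summands $|x|^2(1-|w|^2)$, $|x||\nabla w|$ and $|x|^2|\nabla^2 w|$ are uniformly bounded for $|x|$ large, hence on all of $\mathbb{R}^3$ by smoothness. For the remaining term $|x|^3|\nabla(1-|w|^2)|=2 r^3 f(r)|f'(r)|$, the needed estimate $|f'(r)|=O(r^{-3})$ is not contained in \eqref{odedec} and requires a separate argument. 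Setting $h:=1-f$, so that $r^2 h\to 1$ by \eqref{odedec}, substitution into \eqref{cauchypb} produces at infinity the linear equation
\[
-h''(r)-\frac{2}{r}h'(r)+2h(r)=\frac{2}{r^2}+O(r^{-4}),
\]
and the ansatz $h=1/r^2+k$ reduces this to the same equation for $k$ with right-hand side $O(r^{-4})$. Since the operator $-\partial_r^2-(2/r)\partial_r+2$ admits only exponentially decaying homogeneous solutions on $(0,+\infty)$, the solution $k$ with $k\to 0$ at infinity inherits the decay of the forcing, giving $k=O(r^{-4})$; differentiating then yields $h'(r)=-2/r^3+O(r^{-5})$, so that $r^3|f'(r)|$ is bounded. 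Equivalently, one may apply interior Schauder estimates, after unit-scale rescaling near points $x$ with $|x|\to\infty$, to the elliptic identity $-\Delta(1-|w|^2)+2|w|^2(1-|w|^2)=2|\nabla w|^2$.

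\medskip

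Finally, integrating in spherical coordinates centered at $x_0$ and using the explicit form of $|\nabla w|^2$,
\[
\frac{1}{R}\int_{B_R(x_0)}\left(\frac{1}{2}|\nabla w|^2+\frac{(1-|w|^2)^2}{4}\right)dx=\frac{4\pi}{R}\int_0^R\left[\frac{r^2}{2}(f'(r))^2+f(r)^2+\frac{r^2}{4}(1-f(r)^2)^2\right]dr,
\]
and the right-hand side converges to $4\pi$ as $R\to+\infty$ by \eqref{radendec}, giving \eqref{radscaledenergy}. The main obstacle in the whole argument is the cubic-decay estimate for $\nabla(1-|w|^2)$ in the second step: the leading-order asymptotics in \eqref{odedec} are insufficient, and one must perform a second-order analysis of $1-f$ at infinity (or the equivalent elliptic argument on the PDE satisfied by $1-|w|^2$) to sharpen the control on $f'$ from $o(r^{-1})$ to $O(r^{-3})$.
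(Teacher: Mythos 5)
Your proof follows the same basic route as the paper (direct verification in spherical coordinates, then the energy computation), but you have correctly identified a genuine imprecision in the paper's one-line justification of \eqref{simonbound}. The paper asserts that \eqref{simonbound} ``routinely'' follows from \eqref{odedec}; but \eqref{odedec} only gives $r f'(r)=o(1)$, while the term $|x|^3|\nabla(1-|w|^2)|=2r^3 f(r)|f'(r)|$ requires the stronger estimate $f'(r)=\mathcal{O}(r^{-3})$. Your second-order expansion of $h=1-f$ via the linearized operator $-\partial_r^2-(2/r)\partial_r+2$ supplies exactly the missing ingredient and is essentially correct (one finds $r^3 f'(r)\to 2$). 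The remaining three terms in \eqref{simonbound} do follow directly from \eqref{odedec}, as you note, and your smoothness argument at the origin (odd Taylor series forced by the parity of \eqref{cauchypb}) and the energy identity via \eqref{radendec} both match the paper. Note, incidentally, that in the paper's logical structure the full estimate \eqref{highderbound} of Proposition~\ref{BBH} (which bootstraps from \eqref{1derbound} alone and covers the $|x|^3$ term) applies to $w$ and could be invoked instead, but since Proposition~\ref{Radsol} is stated before Proposition~\ref{BBH} your self-contained ODE argument is arguably cleaner.

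Two small caveats on your fix. First, the operator $-\partial_r^2-(2/r)\partial_r+2$ has homogeneous solutions $e^{\pm\sqrt{2}r}/r$, so it admits an exponentially \emph{growing} solution as well as a decaying one; what you want to say is that since $k\to 0$ the coefficient of $e^{\sqrt{2}r}/r$ vanishes, and $k$ is then the particular solution (of the order of the forcing) plus an $\mathcal{O}(e^{-\sqrt{2}r}/r)$ correction. This should be stated, e.g.\ via variation of parameters. Second, the ``equivalent'' interior Schauder argument on $-\Delta(1-|w|^2)+2|w|^2(1-|w|^2)=2|\nabla w|^2$ is not quite a drop-in alternative: after rescaling to unit scale around a point $|x|\sim R$ the zeroth-order coefficient becomes $\mathcal{O}(R^2)$, so standard Schauder estimates degenerate; the correct tool is the comparison/maximum-principle argument of the type in Lemma~A.1 of \cite{BBH2}, which is precisely how the paper handles the analogous term in Proposition~\ref{BBH}. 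With these two points tightened, the proof is complete.
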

\begin{proof}
As in \cite{AF} and \cite{G}, $w$ is smooth and it is a classical solution of \eqref{GL}. It is routine to check that \req{simonbound} follows from \req{odedec}. 
Then a simple calculation yields 
$$\displaystyle{|\nabla w(x)|^2=(f^\prime(|x-x_0|))^2+\frac{2(f(|x-x_0|))^2}{|x-x_0|^2}+ \frac{(1-|f(|x-x_0|)|^2)^2}{4} }\,,$$ 
whence \req{radscaledenergy} follows from \req{radendec}. 
\end{proof}

\begin{remark}
\label{equivariance}
The solution $U$ given by \req{GLsolutions} is the unique $O(3)$-equivariant solution $u$ of \req{GL}-\req{modtoone} such that $u^{-1}(\{0\})=\{0\}$ 
and ${\rm deg}_\infty u=1$. 
Indeed for each fixed $x \neq 0$, setting $l_x$ to be the line passing through $0$ and $x$,  $u(l_x) \subset l_x$  because $u$ is equivariant (actually invariant) under rotations fixing $l_x$. 
Hence we can write $u(x)=(x/|x|)\sigma(x) |u(x)|$ with $\sigma(x)=\pm1$ and $|u(x)|=g(|x|)$ for some smooth function $g:(0,+\infty) \to (0,+\infty)$. 
Since $u$ is smooth and ${\rm deg}_\infty u=1$, we conclude that $\sigma\equiv 1$. 
Taking \req{modtoone} into account we conclude that $g$ satisfies the Cauchy problem \req{cauchypb}. Finally by the uniqueness result  in \cite{AF,G}, we obtain $g\equiv f$ as claimed.   
\end{remark}

%%%%%%%%%%%%%%%%%%%%%%%%%%%%%%%%%%%%%%%%%%%%%%%%%%%%%%%%%%%%%%%%%%%%%%%%%%%%%%%%%%%%%%
%%%%%%%%%%%%%%%%%%%%%%%%%%%%%%%%%%%%%%%%%%%%%%%%%%%%%%%%%%%%%%%%%%%%%%%%%%%%%%%%%%%%%%

\section{Existence of nonconstant local minimizers}

A basic ingredient in the construction of a nonconstant local minimizer is the following small energy regularity result taken from \cite{LW2} (see also \cite{CS}).

\begin{lemma}
\label{epsregularity}
There exists two positive constants $\eta_0>0$ and $C_0>0$ such that for any $\lambda\geq 1$ and any  $u\in C^2(B_{2R}(x_0);\mathbb{R}^3)$ satisfying 
\[ \Delta u +\lambda^2 u(1-|u|^2)=0 \quad \hbox{in $B_{2R}(x_0)$}\,, \]
with $\displaystyle{\frac{1}{2R}\,E_{\lambda}(u, B_{2R}(x_0)) \leq \eta_0 }\,$,   
then
\begin{equation}
\label{LinfL1energy}
R^2  \sup_{B_R(x_0)} e_{\lambda}(u) \leq C_0 \frac{1}{2R}\, E_{\lambda}(u,B_{2R}(x_0)) \, .
\end{equation}
\end{lemma}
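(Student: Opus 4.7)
The plan is to follow the classical small-energy $\varepsilon$-regularity scheme for the stationary Ginzburg--Landau equation developed in \cite{LW2} (with a parabolic precursor by Chen--Struwe). It combines three ingredients: a scaling reduction, a Bochner-type subsolution inequality for the energy density, and a Schoen-style concentration/blow-up argument based on the monotonicity formula of Lemma~\ref{monform}.

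First I would reduce to the case $\lambda=1$ and $x_0=0$. Setting $v(y):=u(y/\lambda)$, the map $v$ solves $\Delta v+v(1-|v|^2)=0$ on $B_{2\lambda R}$, and a direct check gives $(2\lambda R)^{-1}E_1(v,B_{2\lambda R})=(2R)^{-1}E_\lambda(u,B_{2R}(x_0))$ together with $(\lambda R)^2\sup_{B_{\lambda R}}e_1(v)=R^2\sup_{B_R(x_0)}e_\lambda(u)$, so it is enough to prove the statement for $\lambda=1$, $x_0=0$ and arbitrary $R>0$. Since $(|u|^2-1)_+$ is a subsolution of a linear elliptic equation by the maximum principle, one has $|u|\le 1$ on $B_{2R}$, and a direct computation from $\Delta u=-u(1-|u|^2)$ yields the Bochner inequality
\[
\Delta e(u)=|\nabla^2u|^2-2|\nabla u|^2(1-|u|^2)+4\sum_{\alpha=1}^{3}(u\cdot\partial_\alpha u)^2+|u|^2(1-|u|^2)^2\ge -4\,e(u),
\]
which presents $e(u)$ as a nonnegative subsolution of a linear elliptic equation on $B_{2R}$.

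The heart of the argument is a Schoen-type concentration. Consider $\phi(y):=(2R-|y|)^2\,e(u)(y)$ on $\bar B_{2R}$, which vanishes on $\partial B_{2R}$ and thus attains its maximum at some $y_0\in B_{2R}$; write $\rho:=2R-|y_0|$ and $e_1:=e(u)(y_0)$. By the defining property of $y_0$ one has $\sup_{B_{\rho/2}(y_0)}e(u)\le 4e_1$, and for every $y\in B_R$ one has $2R-|y|\ge R$, so that $R^2\sup_{B_R}e(u)\le\max\phi=\rho^2 e_1$. It therefore suffices to prove $\rho^2 e_1\le C_0\,(2R)^{-1}E(u,B_{2R})$. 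I would prove this by contradiction: a failure would produce a sequence of rescaled maps $w_n(z):=u_n(y_n+z/\sqrt{e_{1,n}})$ on balls $B_{r_n}$ of growing radius $r_n=\rho_n\sqrt{e_{1,n}}/2\to+\infty$, each solving $\Delta w_n+\mu_n^2 w_n(1-|w_n|^2)=0$ with $\mu_n:=e_{1,n}^{-1/2}$, and satisfying $e_{\mu_n}(w_n)(0)=1$ and $\sup_{B_{r_n}}e_{\mu_n}(w_n)\le 4$. By standard elliptic estimates, $w_n\to w_\infty$ in $C^2_{\rm loc}(\R^3;\R^3)$ along a subsequence, with $e_{\mu_\infty}(w_\infty)(0)=1$. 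Translating the monotonicity formula \req{monotonicity} for $u_n$ at $y_n$ into the rescaled variables, together with the inclusion $B_{\rho_n}(y_n)\subset B_{2R_n}$ and the smallness $\eta_n:=(2R_n)^{-1}E(u_n,B_{2R_n})\to 0$, forces $r^{-1}E_{\mu_n}(w_n,B_r)\to 0$ for every fixed $r>0$. Hence $E_{\mu_\infty}(w_\infty,B_r)=0$ for all $r>0$, so $w_\infty$ is constant --- contradicting $e_{\mu_\infty}(w_\infty)(0)=1$.

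The main technical difficulty lies in running this blow-up quantitatively: one must control elliptic estimates for $w_n$ uniformly in the rescaling parameter $\mu_n$ (which may go to $0$, to a finite positive value, or to $+\infty$, yielding either a harmonic or a Ginzburg--Landau limit), and employ the monotonicity formula to transfer smallness of the integrated energy from the large scale $B_{2R_n}$ down to every fixed scale in the rescaled picture. It is at this last step that the smallness constant $\eta_0$ gets fixed.
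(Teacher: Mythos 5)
The paper does not prove this lemma: it is quoted directly from Lin--Wang \cite{LW2} (see also Chen--Struwe \cite{CS}), so the comparison is with the argument in those references rather than with text in this paper. Your overall scheme --- scaling reduction, Schoen-type concentration at a maximum of $(2R-|y|)^2 e(u)$, blow-up compactness fed by the monotonicity formula --- is indeed the correct skeleton, and the scaling reduction to $\lambda=1$ together with the verification $\Delta e(u)\ge -4e(u)$ are both correct. However, there are two genuine gaps in the contradiction argument as written.

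First, the monotonicity step does not close. The maximum point $y_n$ of $(2R_n-|y|)^2 e(u_n)(y)$ over $\overline{B_{2R_n}}$ may lie arbitrarily close to $\partial B_{2R_n}$, in which case ${\rm dist}(y_n,\partial B_{2R_n})=\rho_n$, and the monotonicity formula at $y_n$ only reaches out to radius $\rho_n$. It then gives, for $\tau\le\rho_n$,
\[
\frac{1}{\tau}\,E(u_n,B_\tau(y_n))\;\le\;\frac{1}{\rho_n}\,E(u_n,B_{\rho_n}(y_n))\;\le\;\frac{2R_n}{\rho_n}\cdot\frac{1}{2R_n}E(u_n,B_{2R_n})\,,
\]
and the factor $2R_n/\rho_n$ is unbounded when $\rho_n/R_n\to 0$. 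So your assertion that ``$r^{-1}E_{\mu_n}(w_n,B_r)\to 0$ for every fixed $r>0$'' does not follow from smallness of $\eta_n$ alone. The standard cure is to maximize the Schoen function over a strictly smaller ball, e.g.\ $(\tfrac{3}{2}R-|y|)^2 e(u)(y)$ over $\overline{B_{3R/2}}$: then $|y_n|\le \tfrac32 R_n$ forces ${\rm dist}(y_n,\partial B_{2R_n})\ge R_n/2$, and the monotonicity formula transfers the smallness with a constant $\le 4$, while the lower bound $(\tfrac{3}{2}R-|y|)\ge R/2$ on $B_R$ still yields the desired estimate, up to a harmless factor of $4$.

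Second, the blow-up radius $r_n=\rho_n\sqrt{e_{1,n}}/2$ does not automatically tend to $+\infty$ under your contradiction hypothesis $\rho_n^2 e_{1,n}> n\,(2R_n)^{-1}E(u_n,B_{2R_n})$: the right-hand side may go to $0$ faster than $1/n$, so $\rho_n^2 e_{1,n}$ may stay bounded or even vanish. The argument in the literature splits into two steps: one first shows the a priori bound $\rho^2 e_1\le 1$ under $(2R)^{-1}E_\lambda(u,B_{2R})\le\eta_0$ (where the contradiction hypothesis $\rho_n^2 e_{1,n}>1$ \emph{does} yield $r_n>1/2$, and compactness can be run --- although one must also invoke a maximum-principle bound of Bethuel--Brezis--H\'elein type on $1-|w_n|^2$ to rule out the case where the potential part of $e_{\mu_n}(w_n)(0)=1$ survives as $\mu_n\to\infty$); and only then upgrades to the sharp $L^\infty$--$L^1$ estimate. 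As stated, your single contradiction step conflates these two phases and cannot be made to close without one or both of the fixes above.
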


We will also make use of the following boundary version of Lemma \ref{epsregularity} (see \cite{C,CL}). 
\begin{lemma}
\label{epsregularitybd}
Let $g:\partial B_1\to \mathbb{S}^2$ be a smooth map. There exists two positive constants $\eta_1>0$ and $C_1>0$ such that for any $\lambda\geq 1$, 
$0<R<\eta_1/2$, $x_0\in\partial B_1$ and any  $u\in C^2(\overline B_1\cap B_{2R}(x_0);\mathbb{R}^3)$  satisfying $u=g$ on $\partial B_1\cap B_{2R}(x_0)$ and 
\[ \Delta u +\lambda^2 u(1-|u|^2)=0 \quad \hbox{in $B_1\cap B_{2R}(x_0)$}\,, \]
with $\displaystyle{\frac{1}{2R}\,E_{\lambda}(u, B_1\cap B_{2R}(x_0)) \leq \eta_1 }\,$,   
then
\begin{equation}
\label{LinfL1energybis}
R^2  \sup_{B_1\cap B_R(x_0)} e_{\lambda}(u) \leq C_1\frac{1}{2R}\,E_{\lambda}(u, B_1\cap B_{2R}(x_0)) \, .
\end{equation}
\end{lemma}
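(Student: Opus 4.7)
The strategy is to mimic the blow-up / compactness proof of the interior Lemma \ref{epsregularity}, using the smoothness of the Dirichlet datum $g$ to control the behavior up to the boundary. Note that $|u|\leq 1$ in $\overline{B_1}\cap B_{2R}(x_0)$ by the maximum principle for \eqref{GL} together with the fact that $g$ is $\mathbb{S}^2$-valued. I argue by contradiction: if no admissible pair $(\eta_1, C_1)$ exists, I extract sequences $\lambda_n\geq 1$, $R_n\to 0^+$, $x_n\in\partial B_1$, and solutions $u_n$ with $\alpha_n := (2R_n)^{-1}E_{\lambda_n}(u_n, B_1\cap B_{2R_n}(x_n))\to 0$ yet $R_n^2\sup_{B_1\cap B_{R_n}(x_n)} e_{\lambda_n}(u_n)/\alpha_n\to+\infty$. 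A boundary-adapted scaling-invariant ``worst point'' selection (using the tangential a priori bounds on $\partial B_1$ coming from the smoothness of $g$ to prevent concentration on $\partial B_1$ itself) yields a point $y_n\in \overline B_1\cap B_{R_n}(x_n)$ and a scale $\sigma_n>0$ such that $\tau_n := e_{\lambda_n}(u_n)(y_n)^{-1/2}$ satisfies $\tau_n/\sigma_n\to 0$ and $e_{\lambda_n}(u_n)\leq 4\, e_{\lambda_n}(u_n)(y_n)$ on $B_1\cap B_{\sigma_n}(y_n)$.

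Next I rescale: set $\mu_n := \lambda_n\tau_n$ and $v_n(z) := u_n(y_n+\tau_n z)$ on $\Omega_n := \tau_n^{-1}\bigl((B_1\cap B_{\sigma_n}(y_n))-y_n\bigr)$. Then $v_n$ solves $\Delta v_n + \mu_n^2 v_n(1-|v_n|^2)=0$ on $\Omega_n$ with $|v_n|\leq 1$, $e_{\mu_n}(v_n)(0)=1$, $e_{\mu_n}(v_n)\leq 4$ on $\Omega_n\cap B_{\sigma_n/\tau_n}(0)$, and $E_{\mu_n}(v_n,\Omega_n)\to 0$. A dichotomy now arises based on $d_n := \mathrm{dist}(y_n,\partial B_1)/\tau_n$. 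If $d_n\to+\infty$, the domains $\Omega_n$ exhaust $\R^3$ and the interior Lemma \ref{epsregularity} applied to $v_n$ on sufficiently large balls around the origin forces $e_{\mu_n}(v_n)(0)\to 0$, a contradiction. If $d_n$ stays bounded, a rotation and a translation of length $O(d_n\tau_n)\ll 1$, followed by a flattening of $\partial B_1$ (which perturbs the operator by $O(\tau_n)$), reduce $\Omega_n$ to a subset of the half-space $\R^3_+$; the rescaled boundary trace $g_n(z):=g(y_n+\tau_n z)$ converges in $C^k_{\mathrm{loc}}$ to the constant $\bar g\in\mathbb{S}^2$. Extracting subsequences so that $\mu_n\to\mu_\infty\in[0,+\infty]$ and appealing to uniform boundary Schauder estimates, I obtain a $C^{1,\alpha}_{\mathrm{loc}}$ limit $v_\infty$ on $\overline{\R^3_+}$ solving the limit equation ($\mathbb{S}^2$-valued harmonic if $\mu_\infty=+\infty$; Ginzburg--Landau with parameter $\mu_\infty$ otherwise) with Dirichlet datum $\bar g$ and zero total energy on $\R^3_+$. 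Hence $v_\infty\equiv \bar g$, contradicting $e_{\mu_\infty}(v_\infty)(0)=1$ inherited from the $C^1_{\mathrm{loc}}$ convergence.

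The main obstacle is the uniform boundary $C^{1,\alpha}$ compactness for the rescaled perturbed Ginzburg--Landau system with smoothly convergent Dirichlet data. This uses the smoothness of $g$ essentially and explains the restriction $R<\eta_1/2$: $\eta_1$ must be chosen small enough that a single family of boundary charts flattens $\partial B_1$ near every $x_0$ with uniformly bounded derivatives. The nonlinear term $\mu_n^2 v_n(1-|v_n|^2)$ remains controllable because $|v_n|\leq 1$ and the energy bound gives $\mu_n(1-|v_n|^2)\leq C$ on compacts, so the system reduces to a semilinear perturbation of a uniformly elliptic operator for which standard linear boundary estimates apply after subtracting a smooth extension of $g_n$; when $\mu_n\to +\infty$ the same constraint forces $|v_n|\to 1$ and the limiting equation becomes the $\mathbb{S}^2$-valued harmonic map equation on $\R^3_+$, for which the small energy and constant boundary datum still force the limit to be constant.
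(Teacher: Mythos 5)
The paper does not prove this lemma; it cites it directly from \cite{C} and \cite{CL}, so there is no ``paper's own proof'' to compare against. Evaluating your argument on its own terms, there is a genuine gap at the step where you assert that $E_{\mu_n}(v_n,\Omega_n)\to 0$ after the second (Schoen-type) rescaling. The worst-point selection gives you $e_{\mu_n}(v_n)\leq 4$ on $\Omega_n$ with $e_{\mu_n}(v_n)(0)=1$ and $\sigma_n/\tau_n\to\infty$, but it does \emph{not}, by itself, control the total rescaled energy. Writing out the scaling, $E_{\mu_n}(v_n,\Omega_n)=\tau_n^{-1}E_{\lambda_n}(u_n,B_1\cap B_{\sigma_n}(y_n))\leq 2R_n\alpha_n/\tau_n$, and since the contradiction hypothesis only gives $\tau_n^{-2}=e_{\lambda_n}(u_n)(y_n)\geq c\,\alpha_n\gamma_n/R_n^2$ with $\alpha_n\to 0$, $\gamma_n\to\infty$ independently, the quantity $\alpha_n/\tau_n$ need not tend to zero. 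What one actually needs, and what your later steps (applying Lemma~\ref{epsregularity}, and concluding that the half-space limit has zero energy) silently use, is the \emph{scale-invariant} smallness $r^{-1}E_{\mu_n}(v_n,B_r(0)\cap\Omega_n)\to 0$ for each fixed $r$. That estimate follows from the monotonicity formula propagated from the safe scale $\sigma_n$ down to $\tau_n r$; when $y_n$ lies at distance $O(\tau_n)$ from the Dirichlet boundary, the interior monotonicity Lemma~\ref{monform} is unavailable and a \emph{boundary} monotonicity inequality (or a reflection/extension of $u$ across $\partial B_1$ exploiting the smoothness of $g$, as in \cite{C,CL}) is required. Nothing in your sketch establishes this, and without it the blow-up limit need not be trivial: an entire Ginzburg--Landau or $\mathbb{S}^2$-valued harmonic solution with merely bounded energy density can be nonconstant, so no contradiction is reached.

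A secondary issue: the parenthetical claim that ``the tangential a priori bounds on $\partial B_1$ coming from the smoothness of $g$ prevent concentration on $\partial B_1$ itself'' is not correct and not used consistently later. Concentration can a priori occur on the Dirichlet boundary (indeed your own dichotomy allows $d_n$ bounded), and the smoothness of $g$ does not rule it out on its own; it is the small-energy hypothesis together with a boundary monotonicity (or Bochner-type inequality up to the boundary) that does. To repair the argument you should (a) replace the unjustified claim by the scale-invariant smallness and derive it from a boundary monotonicity inequality, which you must state and prove (or cite); and (b) in the boundary dichotomy branch, verify that the flattening and the subtraction of a smooth extension of $g$ introduce only lower-order perturbations compatible with that monotonicity.
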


Another result which is a combination  of \cite{LW1} and \cite{LW2}  will play a crucial role in the sequel.

\begin{proposition}
\label{linwang}
Let $\Omega \subset \mathbb{R}^3$ be a smooth bounded open set and let $\lambda_n\to +\infty$ as $n\to+\infty$. 
For every $n\in\NN$ let $u_n$  be a critical point of $E_{\lambda_n}(\cdot,\Omega)$ such that 
${\sup_{n} E_{\lambda_n}(u_n,\Omega) <+\infty}$. Then, up to a subsequence, $u_n \rightharpoonup u$ weakly in $H^1(\Omega;\mathbb{R}^3)$ for some 
weakly harmonic map $u:\Omega\to \mathbb{S}^2$ and  
$e_{\lambda_n}(u_n)(x)dx\overset{*}{\rightharpoonup} \frac12 |\nabla u|^2dx+\nu$ weakly* as measures on $\Omega$ where  
$\nu=4\pi \theta \mathcal{H}^1 \LL \Sigma$ for some $\mathcal{H}^1$-rectifiable set $\Sigma$ of locally finite $\mathcal{H}^1$-measure and some  integer valued 
$\mathcal{H}^1$-measurable function $\theta: \Sigma \to \mathbb{N}$.
\end{proposition}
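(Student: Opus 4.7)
The plan is to mimic the blow-up / defect measure analysis of Lin--Wang \cite{LW1,LW2}, performed uniformly as $\lambda_n\to+\infty$ and combining the monotonicity formula of Lemma~\ref{monform} with the $\varepsilon$-regularity of Lemma~\ref{epsregularity}. First I would extract weak limits: the uniform bound $\sup_n E_{\lambda_n}(u_n,\Omega)<+\infty$ yields that $\{u_n\}$ is bounded in $H^1(\Omega;\R^3)$, and since
$$\int_\Omega (1-|u_n|^2)^2\,dx \leq \frac{4}{\lambda_n^2}\sup_n E_{\lambda_n}(u_n,\Omega)\longrightarrow 0\,,$$
any weak $H^1$ subsequential limit $u$ is $\mathbb{S}^2$-valued a.e. The measures $\mu_n := e_{\lambda_n}(u_n)\,dx$ have uniformly bounded mass, hence admit a weak* subsequential limit $\mu$, which by lower semicontinuity of the Dirichlet term splits as $\mu = \frac12|\nabla u|^2\,dx + \nu$ for some nonnegative Radon measure $\nu$. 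To show $u$ is weakly harmonic into $\mathbb{S}^2$, I would test the Euler--Lagrange equation for $u_n$ with the variation $\varphi\times u_n$ for $\varphi\in C^\infty_c(\Omega;\R^3)$: since $u_n\cdot(\varphi\times u_n)\equiv 0$, the penalization term drops out, and passing to the weak limit yields $\Delta u + u|\nabla u|^2=0$ in $\mathcal{D}'(\Omega)$.

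Next I would analyse the concentration set. Passing to the limit in the monotonicity formula \req{monotonicity} shows that $r\mapsto r^{-1}\mu(B_r(x_0))$ is nondecreasing for every $x_0\in\Omega$, so the $1$-density $\Theta(\mu,x_0):=\lim_{r\to 0^+} r^{-1}\mu(B_r(x_0))$ exists and is upper semicontinuous in $x_0$. Setting
$$\Sigma := \big\{x_0\in\Omega \,:\, \Theta(\mu,x_0)\geq\eta_0\big\}\,,$$
with $\eta_0$ from Lemma~\ref{epsregularity}, the upper semicontinuity gives that $\Sigma$ is closed in $\Omega$, while the $\varepsilon$-regularity provides uniform $L^\infty$ bounds on $e_{\lambda_n}(u_n)$ on compact subsets of $\Omega\setminus\Sigma$. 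Standard elliptic bootstrap then yields $u_n\to u$ in $C^2_{\rm loc}(\Omega\setminus\Sigma)$, so $\nu$ is supported in $\Sigma$. Since $|\nabla u|^2\,dx$ is Lebesgue absolutely continuous and therefore has vanishing $1$-density a.e.\ in $\Omega\setminus\Sigma_{\rm sing}(u)$, the lower density bound $\Theta(\mu,x_0)\geq\eta_0$ transfers to $\nu$ at $\mathcal{H}^1$-a.e. $x_0\in\Sigma$, and the classical density argument gives that $\Sigma$ has locally finite $\mathcal{H}^1$-measure and $\nu = \Theta(\nu,\cdot)\,\mathcal{H}^1\LL\Sigma$.

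The main obstacle is the integer quantization $\Theta(\nu,\cdot)/(4\pi)\in\NN$ together with the $\mathcal{H}^1$-rectifiability of $\Sigma$. For this I would perform, at $\mathcal{H}^1$-a.e. $x_0\in\Sigma$, a second blow-up on scales $r_n\downarrow 0$ chosen so that $r_n\lambda_n\to+\infty$, setting $\tilde u_n(y):=u_n(x_0+r_ny)$. These are critical points of $E_{r_n\lambda_n}(\cdot,B_R)$ with uniformly bounded rescaled energy, so the previous extraction produces a tangent pair $(u_*,\nu_*)$ such that, by the monotonicity formula, $\nu_*$ is $0$-homogeneous and supported on the set of maximal density. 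The density lower bound forces this support to be a line $\ell$ through the origin; translation invariance of $\nu_*$ along $\ell$ then forces $u_*$ and $\nu_*$ to be cylindrical, with cross-section a planar Ginzburg--Landau blow-up profile whose energy is quantized in integer multiples of $4\pi$ by the 2D vortex analysis of \cite{LW1} (see also \cite{BCL}). This simultaneously yields $\Theta(\nu,x_0)=4\pi\theta(x_0)$ with $\theta(x_0)\in\NN$ and an approximate tangent line $\ell$ to $\Sigma$ at $x_0$, from which the $\mathcal{H}^1$-rectifiability follows by standard criteria.
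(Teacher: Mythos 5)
The paper does not prove Proposition~\ref{linwang}; it cites it directly as a consequence of Lin--Wang \cite{LW1,LW2}, so there is no internal proof for you to match. Your sketch is a reasonable outline of the Lin--Wang programme, and the first two stages are correctly laid out: the energy bound gives weak $H^1$ compactness and forces the limit into $\mathbb S^2$ via the vanishing potential; testing the Euler--Lagrange equation against $\varphi\times u_n$ (equivalently, passing $\mathrm{div}(\nabla u_n\times u_n)=0$ to the limit using Rellich for the strong $L^2$ convergence of $u_n$) gives weak harmonicity of $u$; and the monotonicity formula of Lemma~\ref{monform} combined with the $\varepsilon$-regularity of Lemma~\ref{epsregularity} localizes the defect measure $\nu$ on the closed set $\Sigma$ where the $1$-density exceeds $\eta_0$, with $C^2_{\rm loc}$ convergence off $\Sigma$.

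The third stage, which carries essentially all of the theorem's content, is where the sketch is too thin to count as a proof. You never invoke \emph{stationarity}: the $u_n$ are smooth critical points and therefore satisfy the inner-variation (stress-energy) identity, and passing that identity to the limit is what makes the pair $(u,\nu)$ a stationary generalized $1$-varifold. Without that structure neither the $\mathcal H^1$-rectifiability of $\Sigma$ nor the Radon--Nikodym representation $\nu=\Theta\,\mathcal H^1\LL\Sigma$ comes for free; it also controls the singular set of $u$, which you need to separate the densities of $\frac12|\nabla u|^2\,dx$ and $\nu$. The assertions that the second blow-up tangent measure is a cone supported on a line and that the tangent map and measure are cylindrical are stated, not argued; they require a Federer-type dimension reduction together with the stationarity just mentioned, and this is arguably the hardest part of \cite{LW1,LW2}. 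The diagonal choice of scales with $r_n\lambda_n\to+\infty$ and equibounded rescaled energy also needs justification. Finally, under $r_n\lambda_n\to+\infty$ the cross-sectional defect is produced by bubbling of $\mathbb S^2$-valued harmonic spheres, whose energies are quantized in multiples of $4\pi$ by the harmonic-map bubbling and neck analysis (cf.\ \cite{LR}), not by a ``planar Ginzburg--Landau blow-up profile''; identifying the correct 2D object is needed to obtain integer $\theta$. Given that the paper merely cites the result, a complete blind reconstruction is a large ask, but as written the rectifiability and quantization steps remain unproven.
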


The key result of this section is the following proposition. 

\begin{proposition}
\label{Vorticity}
Let $\lambda\geq 1$ and $u_\lambda \in H^1(B_1;\mathbb{R}^3)$ be a global minimizer of $E_{\lambda}(\cdot,B_1)$ over $H^1_{\rm{Id}}(B_1;\mathbb{R}^3)$. For any $\delta\in(0,1)$, 
there exists a constant $C_\delta>0$ independent of $\lambda$ such that ${\rm diam}\big(\{ |u_\lambda|\leq \delta \}\big )\leq C_\delta\lambda^{-1}$ and 
 ${\rm dist}_{H}\big(\{ |u_\lambda|\leq \delta \} ,\{0\}\big)=o(1)$ as $\lambda\to+\infty$ where ${\rm dist}_H$ denotes the Haussdorf distance. 
\end{proposition}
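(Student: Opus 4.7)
Proof Proposal.

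The plan is to derive both statements by first obtaining a sharp energy bound for $u_\lambda$ and identifying its weak limit as $x/|x|$, then invoking $\epsilon$-regularity for the Hausdorff statement, and finally running a second blow-up at a scale intermediate between $1/\lambda$ and $1$ for the diameter bound.

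First I would exhibit a competitor based on the equivariant solution of Proposition \ref{Radsol}: take $w_\lambda(x) := U(\lambda x)/f(\lambda) \in H^1_{\mathrm{Id}}(B_1;\mathbb{R}^3)$ and compute using \eqref{radendec} and the change of variable $y = \lambda x$ that $E_\lambda(w_\lambda, B_1) = \lambda^{-1} E_1(U,B_\lambda) + o(1) = 4\pi + o(1)$ as $\lambda\to+\infty$ (with the factor $f(\lambda)\to 1$ producing only lower-order corrections). Minimality of $u_\lambda$ then gives the uniform bound $E_\lambda(u_\lambda, B_1) \leq 4\pi + o(1)$. Next, with this bound I would apply Proposition \ref{linwang} together with the Brezis-Coron-Lieb theorem (which identifies $x/|x|$ as the unique minimizer of the Dirichlet energy in $H^1_{\mathrm{Id}}(B_1;\mathbb{S}^2)$, with value $4\pi$). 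The upper bound above, weak lower semicontinuity, and minimality force $u_\lambda\rightharpoonup x/|x|$ in $H^1$, vanishing defect measure, and $E_\lambda(u_\lambda,B_1)\to 4\pi$; in particular the convergence is strong in $H^1(B_1)$ and $\int \tfrac{\lambda^2}{4}(1-|u_\lambda|^2)^2\,dx\to 0$. By dyadic application of Lemma \ref{epsregularity} on balls $B_R(x_0)\subset B_1\setminus\{0\}$ with $R\simeq \mathrm{dist}(x_0,0)$ (where the scaled energies eventually lie below $\eta_0$), plus Lemma \ref{epsregularitybd} near $\partial B_1$, the convergence upgrades to $C^k_{\mathrm{loc}}(\overline{B_1}\setminus\{0\};\mathbb{R}^3)$ for every $k$. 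Hence for any $\rho>0$, $|u_\lambda|>\delta$ on $\{|x|\geq\rho\}$ for $\lambda$ large, yielding $\mathrm{dist}_H(\{|u_\lambda|\leq\delta\},\{0\})=o(1)$.

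For the diameter bound I would argue by contradiction: suppose there exist $\lambda_n\to+\infty$ and $x_n^1, x_n^2\in\{|u_{\lambda_n}|\leq\delta\}$ with $\lambda_n|x_n^1-x_n^2|\to+\infty$. With $r_n := \max(|x_n^1|,|x_n^2|)$ (so $r_n\to 0$ by the Hausdorff statement) and $\mu_n := \lambda_n r_n$ (which tends to $+\infty$ since $\lambda_n|x_n^1-x_n^2|\leq 2\mu_n$), the rescaled maps $v_n(y) := u_{\lambda_n}(r_n y)$ are local minimizers of $E_{\mu_n}$ on $B_{1/r_n}$ satisfying $\Delta v_n + \mu_n^2 v_n(1-|v_n|^2)=0$ with boundary data $y/|y|$ on $\partial B_{1/r_n}$. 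They carry two vorticity points $y_n^i := x_n^i/r_n$ satisfying $|y_n^2|=1$ and $|y_n^1-y_n^2|\in(0,2]$, so up to subsequence $y_n^i\to y_\infty^i$ with $|y_\infty^2|=1$. The monotonicity formula applied to $u_{\lambda_n}$ yields $E_{\mu_n}(v_n,B_R)\leq 4\pi R + o(1)$ for every fixed $R>0$. Repeating Steps 1--2 for the family $\{v_n\}$ and invoking the analogue of Brezis-Coron-Lieb for $\mathbb{S}^2$-valued local minimizers on $\mathbb{R}^3$ of \cite[Theorem 2.2]{AL}, the weak limit $v_\infty:\mathbb{R}^3\to\mathbb{S}^2$ must have the form $v_\infty(y)=T(y/|y|)$ for some $T\in O(3)$. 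Then $C^k_{\mathrm{loc}}$ convergence of $v_n$ away from $0$ gives $|v_n(y_n^2)|\to|v_\infty(y_\infty^2)|=1$, contradicting $|v_n(y_n^2)|\leq\delta$.

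The hardest step is the last one: passing minimality to the blow-up limit $v_\infty$ so that the rigidity of \cite{AL} applies. Concretely one must verify, via a $\Gamma$-convergence type comparison, that any admissible perturbation of $v_\infty$ on a ball can be lifted (using the equipartition $\tfrac{1}{2}|\nabla u|^2 \sim \tfrac{\mu_n^2}{4}(1-|u|^2)^2$ together with a boundary-layer construction at scale $\mu_n^{-1}$) to a competitor for $v_n$ with asymptotically equal energy, so that the limiting $\mathbb{S}^2$-valued map inherits local minimality of the Dirichlet integral. One also needs to rule out a non-trivial defect measure in this second blow-up, which follows from the sharp linear energy bound $4\pi R + o(1)$ combined with the fact that the degree-one sphere-valued harmonic map $T(y/|y|)$ already saturates the linear growth.
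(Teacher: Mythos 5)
Your first half (strong $H^1$ convergence to $x/|x|$ via the Brezis--Coron--Lieb uniqueness theorem, followed by interior and boundary $\eps$-regularity to conclude $\{|u_\lambda|\le\delta\}\subset B_\rho$ for $\lambda$ large) is essentially the paper's argument, though the paper uses the simpler competitor $v(x)=x/|x|$ (which already lies in $H^1_{\rm Id}(B_1;\mathbb{R}^3)$ and gives the clean bound $E_\lambda(u_\lambda,B_1)\le 4\pi$) rather than $U(\lambda x)/f(\lambda)$.

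Your second half takes a different route and has several genuine gaps. First, the rescaling scale $r_n=\max(|x^1_n|,|x^2_n|)$ centered at the origin does not keep the two vorticity points separated in the limit: one only knows $\mu_n|y^1_n-y^2_n|\to+\infty$, which is compatible with $|y^1_n-y^2_n|\to 0$, so both points may collapse onto the single singularity $y_*$ of the limiting map $T\big((y-y_*)/|y-y_*|\big)$, and then the contradiction $|v_n(y^2_n)|\to 1$ fails because $C^k_{\rm loc}$ convergence only holds away from $y_*$. Second, the way you propose to rule out a defect measure is circular: you invoke the fact that $T(y/|y|)$ saturates the linear energy bound, but that presumes you already know the blow-up limit is $T(y/|y|)$, which is what you are trying to prove. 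Third, and most substantially, invoking \cite[Theorem 2.2]{AL} requires proving that the blow-up limit inherits local minimality of the Dirichlet integral; you correctly flag this as the hardest step, but the ``$\Gamma$-convergence type comparison'' you gesture at is precisely the content of Proposition \ref{propcomp} and Corollary \ref{strtangmap} in Section~4, which rely on a nontrivial dipole-removing construction and a boundary-layer approximation map, and which the paper develops \emph{after} Proposition \ref{Vorticity}.

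The paper's proof avoids all of this. It rescales at scale $r_n:={\rm diam}(\{|u_{\lambda_n}|\le\delta\})$ centered at the \emph{midpoint} of an extremal pair, so that the two vorticity points land at the fixed points $P_1=(1/2,0,0)$, $P_2=(-1/2,0,0)$; it kills the defect measure $\nu=4\pi\theta\,\mathcal{H}^1\LL\Sigma$ not by minimality but by the monotonicity-formula density bound $\frac1R\nu(B_R(x_0))\le4\pi$, which forces the integer density $\theta\le 1/2$ and hence $\theta\equiv 0$; and then it uses only \emph{stationarity} of the harmonic limit $\phi$, the Lin--Rivi\`ere quantization $8\pi k_i$ of the density at $P_i$ together with the monotonicity bound (giving $k_1=k_2=1$), and a ball-packing computation on $B_1(Q_R)\supset B_R(P_1)\cup B_{1-R}(P_2)$ to force $\nabla\phi\equiv 0$ off the segment and reach a contradiction. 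No Almgren--Lieb rigidity and no transfer of minimality to the limit is needed, which is what makes the argument self-contained at this stage of the paper.
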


\begin{proof}  Let us consider an arbitrary sequence $\lambda_n\to +\infty$, and for every $n\in\NN$ let $u_n\in H^1(B_1;\R^3)$ be a global minimizer of 
$E_{\lambda_n}(\cdot,B_1)$ under the boundary condition ${u_n}_{|\partial B_1}=x$. It is well known that $u_n$ satisfies $u_n\in C^2(\overline B_1)$ 
and $|u_n|\leq 1$ for every $n\in\NN$. 
\vskip5pt 

\noindent {\em Step 1.} We claim that $u_n \to v(x):=x/|x|$ strongly in $H^1(B; \mathbb{R}^3)$. Since the map $v$ is admissible, one has 
\begin{equation}\label{bdenggl}
\frac{1}{2}\int_{B_1}|\nabla u_n|^2\leq E_{\lambda_n}(u_n,B_1)\leq E_{\lambda_n}(v,B_1)=\frac{1}{2}\int_{B_1}|\nabla v|^2=4\pi\quad\text{for every $n\in\NN$.}
\end{equation}
As a consequence, $\{u_n\}$ is bounded in $H^1(B_1;\R^3)$ and up to a subsequence, $u_n\to u_\star$ weakly in $H^1(B; \mathbb{R}^3)$ for some $\mathbb{S}^2$-valued map 
$u_\star$ satisfying ${u_\star}_{|\partial B_1}=x$. By Theorem~7.1 in \cite{BCL}, the map $v$ is the unique minimizer of $\,u\in H^1(B_1;\mathbb{S}^2)\mapsto \int_{B_1}|\nabla u|^2$ 
under the boundary condition $u_{|\partial B_1}=x$. In particular, 
$\int_{B_1}|\nabla u_\star|^2 \geq \int_{B_1}|\nabla v|^2$ 
which, combined with \eqref{bdenggl}, yields 
$$\frac{1}{2}\int_{B_1}|\nabla u_n|^2\to \frac{1}{2}\int_{B_1}|\nabla u_\star|^2=\frac{1}{2}\int_{B_1}|\nabla v|^2\quad\text{as $n\to+\infty$}\,.$$ 
Therefore $u_\star\equiv v$ and $u_n\to v$ strongly in $H^1(B; \mathbb{R}^3)$. 
\vskip5pt

\noindent {\em Step 2.} Let $\delta \in (0,1)$ be fixed. We now prove that the family of compact sets $\mathcal{V}_n:=\{|u_n| \leq \delta \} \to \{ 0 \}$ in the Hausdorff sense. 
It suffices to prove for any given $0<\rho<1$, $\mathcal{V}_n\subset B_\rho$ for every $n$ large enough. Since $v$ is smooth outside the origin, we can find $0<\sigma\leq\min( \rho/8,\eta_1/4)$ such that 
$$\frac{1}{\sigma}\int_{B_1\cap B_{4\sigma}(x)}|\nabla v|^2<\min(\eta_0,\eta_1):=\ell \quad \text{for every $x\in \overline B_1\setminus B_\rho$}\,,$$
where $\eta_0$ and $\eta_1$ are given by Lemma \ref{epsregularity} and Lemma  \ref{epsregularitybd} respectively. From the strong convergence of $u_n$ to $v$ in $H^1$, we infer that 
\begin{equation}\label{smalleng}
 \frac{1}{\sigma}\,E_{\lambda_n}(u_n,B_{4\sigma}(x))<\ell\quad \text{for every $x\in \overline B_1\setminus B_\rho$}
 \end{equation}
whenever $n \geq N_1$ for some integer $N_1$ independent of $x$.  
Next  consider a  finite family of points $\{x_j\}_{j\in J}\subset  \overline B_1\setminus B_\rho$ satisfying  $B_{2\sigma}(x_j)\subset B_1$ if $x_j\in B_1$ and 
$$\overline B_1\setminus B_\rho \subset \bigg(\bigcup_{x_j\in B_1}B_{\sigma}(x_j)\bigg)\cup \bigg(\bigcup_{x_j\in \partial B_1}B_{2\sigma}(x_j)\bigg)\,.$$ 
In view of \eqref{smalleng}, for each $j\in J$ we can apply Lemma \ref{epsregularity} in $B_{2\sigma}(x_j)$ if $x_j\in B_1$ and Lemma \ref{epsregularitybd} in $B_1\cap B_{4\sigma}(x_j)$ if $x_j\in \partial B_1$ to deduce  
$$\sup_{\overline B_1\setminus B_\rho}\,e_{\lambda_n}(u_n)\leq C\sigma^{-2}\quad\text{for every $n\geq N_1$}\,,  $$
for some constant $C$ independent of $n$. By Ascoli Theorem the sequence  $\{u_n\}$ is compact in $C^0(\overline{B_1} \setminus B_\rho)$, and thus $|u_n| \to 1$ uniformly in $\overline{B_1}\setminus B_\rho$. 
In particular $|u_n|>\delta$ in $\overline {B_1}\setminus B_\rho$ whenever $n$ is large enough. 
\vskip5pt

In the remaining of this proof we will establish the estimate $\text{diam}\,(\mathcal{V}_n)\leq C_\delta \lambda^{-1}_n$. We shall argue by contradiction. 
Setting $r_n:=\text{diam}\,(\mathcal{V}_n)$, we assume that for a subsequence $\kappa_n:=r_n\lambda_n\to +\infty$. Let $a_n,b_n\in \mathcal{V}_n$ 
such that $|a_n-b_n|=r_n$ and set $c_n$ to be the middle point of the segment $[a_n,b_n]$. In view of Step 2, we have $c_n\to 0$.  Next we define 
for $n$ large enough and $x\in B_{2}$, 
$$w_n(x):=u_n(r_nx+c_n)\,,$$
so that $w_n$ satisfies 
\begin{equation}\label{rescgl}
\Delta w_n+\kappa_n^2w_n(1-|w_n|^2)=0\quad \text{in $B_2$\,.}
\end{equation}
Up to a rotation, we may assume without loss of generality that $(a_n-c_n)/r_n=(1/2,0,0)=:P_1$ and $(b_n-c_n)/r_n=(-1/2,0,0)=:P_2$ so that 
\begin{equation}\label{dblevort}
|w_n(P_1)|=|w_n(P_2)|=\delta\quad \text{for every $n$ sufficiently large}\,. 
\end{equation}
\vskip5pt

\noindent {\em Step 3}.  We claim that up to a subsequence $w_n\to \phi$ strongly in $H^1_{\rm loc}(B_2;\mathbb{R}^3)$ for some weakly stationary 
harmonic map $\phi:B_2\to \mathbb{S}^2$. First we infer from \eqref{bdenggl} and the Mononocity Formula \eqref{monotonicity} applied to $w_n$ and $u_n$ that 
\begin{align}
\frac{1}{R}\,E_{\kappa_n}(w_n,B_R(x_0))\leq\frac{1}{1-|r_nx_0+c_n|}\,E_{\lambda_n}(u_n,B_{1-|r_nx_0+c_n|}(r_nx_0+c_n)) 
\label{estimon}\leq \frac{4\pi}{1-|r_n x_0+c_n|} \,,
\end{align}
for every $x_0\in B_2$ and $0<R<\text{dist}(x_0,\partial B_2)$. Hence $\sup_n E_{\kappa_n}(w_n,B_2)<+\infty$. 
In view of Proposition \ref{linwang}, up a further subsequence, $w_n\rightharpoonup \phi $ weakly in $H^1(B_2;\mathbb{R}^3)$  
where $\phi:B_2\to \mathbb{S}^2$ is a weakly harmonic map, and  
\begin{equation}\label{convrad}
e_{\kappa_n}(w_n)dx\mathop{\rightharpoonup}\limits^{*}\mu:= \frac{1}{2}|\nabla \phi|^2dx+\nu\quad\text{weakly* as measures on $B_2$}\,, 
\end{equation}
for some Radon measure $\nu=4\pi\theta \mathcal{H}^1\LL \Sigma$ where $\Sigma$ is a $\mathcal{H}^1$-rectifiable set with 
locally finite $\mathcal{H}^1$-measure and $\theta$ is an integer valued function. As a direct consequence of the Monotonicity Formula  \eqref{monotonicity} and \req{estimon}, we have 
\begin{equation}\label{estdefmeas}
\frac{1}{R}\, \nu(B_{R}(x_0))\leq \frac{1}{R} \,\mu(B_{R}(x_0))\leq 4\pi
\end{equation}
for every $x_0\in B_2$ and $0<R<\text{dist}(x_0,\partial B_2)$. By Theorem~2.83 in 
\cite{AFP}, the $1$-dimensional density of 
$\nu$ at $x_0$, {\it i.e.}, $\Theta_1(\nu,x_0)=\lim_{R\to0} (2R)^{-1}\nu(B_R(x_0))$, exists and coincides with $4\pi\theta(x_0)$ for $\mathcal{H}^1$-a.e. $x_0\in\Sigma$. In view of \eqref{estdefmeas} we 
deduce that $\theta\leq 1/2$ $\mathcal{H}^1$-a.e. on $\Sigma$. Since $\theta$ is integer valued, we have $\theta=0$ $\mathcal{H}^1$-a.e. on $\Sigma$, i.e.,  $\nu\equiv 0$. 
Going back to \eqref{convrad}, we conclude that $w_n\to \phi$ strongly in $H^1_{\rm loc}(B_2;\mathbb{R}^3)$ and 
\begin{equation}\label{convpen}
\kappa_n^2(1-|w_n|^2)^2\mathop{\longrightarrow}\limits_{n\to+\infty} 0\quad\text{in $L^1_{\rm loc}(B_2)$}\,. 
\end{equation} 
It now remains to prove the stationarity of $\phi$. Since $w_n$ is smooth and satisfies \eqref{rescgl}, we have 
$$\int_{B_2}e_{\kappa_n}(w_n)\,{\rm div}\,\zeta -\sum_{i,j=1}^3\frac{\partial \zeta_i}{\partial x_j}\,\frac{\partial w_n}{\partial x_i}\cdot \frac{\partial w_n}{\partial x_j}=0$$
 for every $\zeta\in C^1_c(B_2;\R^3)$. Using the local strong convergence of $w_n$ and \eqref{convpen}, we can pass to the limit $n\to+\infty$ in the above equation 
 to derive that 
 $$\int_{B_2}|\nabla \phi|^2\,{\rm div}\,\zeta -2\sum_{i,j=1}^3\frac{\partial \zeta_i}{\partial x_j}\,\frac{\partial \phi}{\partial x_i}\cdot \frac{\partial \phi}{\partial x_j}=0 \quad \forall \zeta\in C^1_c(B_2;\R^3)\,,$$
i.e., $\phi$ is stationary in $B_2$. 
\vskip5pt 

\noindent {\em Step 4.} By the energy monotonicity formula for stationary harmonic maps (see  \cite{Sc}) and \eqref{estimon}, we have
\begin{equation}\label{monhm}
\frac{1}{R_1}\int_{B_{R_1}(x_0)}|\nabla\phi|^2\leq \frac{1}{R_2}\int_{B_{R_2}(x_0)}|\nabla\phi|^2\leq 8\pi
\end{equation}
for every $x_0\in B_2$ and $0<R_1\leq R_2\leq {\rm dist}(x_0,\partial B_2)$. We claim that 
\begin{equation}\label{nonvanvor}
\lim_{R\to0}\frac{1}{R}\,\int_{B_R(P_i)}|\nabla \phi|^2=\inf_{0<R<1}\frac{1}{R}\,\int_{B_R(P_i)}|\nabla \phi|^2>0\quad \text{for $i=1,2$}\,.
\end{equation}
Indeed if the limit above vanishes, we could argue as in Step 2 using Lemma \ref{epsregularity} to deduce that $|w_n(P_i)|>\delta$ for $n$ large 
which contradicts \eqref{dblevort}. By the quantization results in \cite{LR}, for $i=1,2$, 
$$\lim_{R\to0}\frac{1}{R}\,\int_{B_R(P_i)}|\nabla \phi|^2 = 8\pi k_i \quad\text{for some $k_i\in\NN$}\,.$$
Combining  \eqref{monhm} with \eqref{nonvanvor}, we deduce that $k_1=k_2=1$ and thus
\begin{equation}\label{pricev}
\inf_{0<R<1}\frac{1}{R}\,\int_{B_R(P_i)}|\nabla \phi|^2=8\pi\quad \text{for $i=1,2$}\,.
\end{equation}
Setting $Q_R=(R-1/2,0,0)$ for $0<R<1$, we then have  
$$8\pi\geq \int_{B_1(Q_R)}|\nabla\phi|^2 \geq \int_{B_{R}(P_1)}|\nabla \phi|^2+\int_{B_{1-R}(P_2)}|\nabla \phi|^2\geq 8\pi R+8\pi(1-R)=8\pi\,. $$
Hence $|\nabla \phi|^2\equiv 0$ a.e. in $B_1(Q_R)\setminus \big(B_{R}(P_1)\cup B_{1-R}(P_2)\big)$ for every $0<R<1$. Since 
$$B_1\cap \bigcup_{0<R<1}\bigg( B_1(Q_R)\setminus \big(B_{R}(P_1)\cup B_{1-R}(P_2)\big)\bigg)=B_1\setminus [(-1,0,0),(1,0,0)]\,,$$
we derive that $\int_{B_1}|\nabla \phi|^2=0$ which obviously contradicts \eqref{pricev}. Therefore $r_n\lambda_n$ remains bounded and the proof is complete .
\end{proof}

\noindent {\bf Proof of Theorem \ref{existence}.} 
Consider a sequence $\lambda_n\to+\infty$ and let $u_n$ be a minimizer of $E_{\lambda_n}(\cdot,B_{1})$ on $H^1_{\rm{Id}}(B_{1};\mathbb{R}^3)$. 
By Proposition \ref{Vorticity},  $|u_n|\geq 1/2$ in $B_1\setminus B_{1/2}$ for $n$ large enough. In particular, $d_r:={\rm deg}(u_n, \partial B_r)$ is well defined for $1/2\leq r\leq1$  and  
$d_r=d_1=1$ thanks to the boundary condition. Hence we may find $a_n\in B_{1/2}$ such that $u_n (a_n)=0$ for every $n$ sufficiently 
large. Again by Proposition \ref{Vorticity}, $a_n\to 0$ and $\{|u_n|\leq 1/2\}\subset B_{r_n}(a_n)$ with $r_n:={\rm diam}(\{|u_n|\leq 1/2\})=\mathcal{O}(\lambda_n^{-1})$. 
Therefore ${\rm deg}(u_n, \partial B_r(a_n))=1$ for any $r\in [r_n, 1/2]$. 

Setting $R_n:=\lambda_n(1-|a_n|)$, $R_n \to +\infty$ as $n \to +\infty$, and we define for $x\in B_{R_n}$,  $\bar u_n(x):=u_n\big(\lambda_n^{-1}x+a_n\big)$ so that $\bar u_n$ satisfies  
$$ \Delta \bar u_n +\bar u_n(1-|\bar u_n|^2)\quad \text{in $B_{R_n}$}\,,$$
$\bar u_n(0)=0$ and $|\bar u_n|\leq 1$ for every $n$. Moreover arguing as in the previous proof, we obtain that 
\begin{equation}\label{quantconstr}
 \limsup_{n \to +\infty} R_n^{-1}E_1(\bar u_n,B_{R_n}) \leq 4\pi\,.
\end{equation}
Then we infer from standard elliptic theory that, up to a subsequence, $\bar u_n \to u$ in $C^2_{\rm loc}(\R^3)$ for some map $u:\R^3\to\R^3$ solving 
$\Delta u +u(1-|u|^2)=0$ in $\R^3$ and $u(0)=0$. By Proposition \ref{Vorticity} and the choice of $a_n$, we have $\{|\bar u_n|\leq 1/2\}\subset \overline B_{R_0}$ with 
$R_0:=\sup_n \lambda_n r_n<+\infty$. Hence  
$|u|\geq 1/2$ in  $\R^3\setminus B_{R_0}$ by continuity and locally uniform convergence. As a consequence, $u$ is nonconstant,  ${\rm deg}_{\infty} u$ is well defined and 
$${\rm deg}_{\infty} u={\rm deg}(u,\partial B_R)=\lim_{n\to+\infty} {\rm deg}(\bar u_n,\partial B_R)=\lim_{n \to+\infty} {\rm deg}(u_n, \partial  B_{r_n}(a_n))=1$$ 
for any $R\geq R_0$. Arguing in the same way, we infer from 
Proposition \ref{Vorticity} that $|u(x)|\to 1$ as $|x|\to +\infty$. Next we deduce from \eqref{quantconstr}, the Monotonicity Formula \req{monotonicity} 
and the smooth convergence of $\bar{u}_n$ to $u$, that $\sup_{R>0}\,R^{-1} E_{1}(u,B_R)\leq4\pi$. By the quantization result \cite[Corollary D]{LW2}, we have 
$R^{-1} E_{1}(u,B_R)\to 4\pi k$ as $R\to+\infty$ with $k\in\{0,1\}$. Since $u$ is nonconstant, we conclude that $k=1$. 
Finally, the local minimality of $u$ easily follows from the minimality of $u_n$ and the strong convergence in $H^1_{\rm loc}(\R^3;\mathbb{R}^3)$ of $\bar u_n$ to $u$. \prbox

%%%%%%%%%%%%%%%%%%%%%%%%%%%%%%%%%%%%%%%%%%%%%%%%%%%%%%%%%%%%%%%%%
%%%%%%%%%%%%%%%%%%%%%%%%%%%%%%%%%%%%%%%%%%%%%%%%%%%%%%%%%%%%%%%%%%

\section{Energy quantization for local minimizers}

This section is devoted to the proof of Theorem \ref{quantization}.  
For any solution $u$ of \eqref{GL} satisfying \eqref{lingro}, 
the scaled maps  
$u_R(x):=u(Rx)$ are relatively weakly compact in $H^1_{\rm loc}(\R^3;\R^3)$. This fact will allow us to study such a map $u$ near infinity.     
First we recall that a tangent map to $u$ at infinity is a 
map $\phi:\R^3\to\R^3$ obtained as a weak limit of $u_n(x):=u(x/R_n)$ in $H^1_{\rm loc}(\R^3;\R^3)$ for some sequence of radii $R_n\to+\infty$. We denote by 
$\mathcal{T}_\infty(u)$ the set of all possible tangent  
maps to $u$ at infinity. The only information given by the potential at infinity is that any $\phi \in \mathcal{T}_\infty(u)$ takes values into $\mathbb{S}^2$. 
This is any easy consequence of the following elementary lemma which will be used in the sequel.

\begin{lemma}
\label{nopotential}
Let $u \in H^1_{\rm{loc}}(\mathbb{R}^3;\mathbb{R}^3)$ be a solution
of \req{GL} satisfying \req{lingro}. Then
\begin{equation}
\label{potentialto0}
\lim_{R \to +\infty} \frac{1}{R} \int_{B_R} \frac{(1-|u|^2)^2}{4}dx =0 \, .
\end{equation}
\end{lemma}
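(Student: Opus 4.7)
The plan is to derive \req{potentialto0} directly from the Monotonicity Formula (Lemma~\ref{monform}) combined with the linear growth bound \req{lingro} and a simple contradiction argument based on the monotonicity of $t\mapsto \int_{B_t}(1-|u|^2)^2dx$.

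First I would observe that, choosing $x_0=0$ in Lemma~\ref{monform}, the function $R\mapsto R^{-1}E(u,B_R)$ is nondecreasing. Together with \req{lingro} this implies that the limit $L:=\lim_{R\to+\infty}R^{-1}E(u,B_R)$ exists and is finite. Passing to the limit $R\to+\infty$ in \req{monotonicity} with $r>0$ fixed and keeping only the last term of the right hand side yields
\begin{equation*}
\frac{1}{2}\int_r^{+\infty}\frac{1}{t^2}\,F(t)\,dt\leq L-\frac{1}{r}E(u,B_r)<+\infty\,,\qquad F(t):=\int_{B_t}(1-|u|^2)^2dx\,,
\end{equation*}
so that $t\mapsto t^{-2}F(t)$ is integrable at infinity.

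Next, I would exploit that $F$ is nondecreasing to conclude $F(R)/R\to 0$. Arguing by contradiction, if $\limsup_{R\to+\infty} R^{-1}F(R)=\eps>0$, I can pick a sequence $R_n\to+\infty$ with $R_{n+1}\geq 2R_n$ and $F(R_n)\geq \eps R_n/2$. Monotonicity of $F$ gives
\begin{equation*}
\int_{R_n}^{2R_n}\frac{F(t)}{t^2}\,dt\geq F(R_n)\int_{R_n}^{2R_n}\frac{dt}{t^2}=\frac{F(R_n)}{2R_n}\geq\frac{\eps}{4}\,,
\end{equation*}
and summing over $n$ (the intervals $[R_n,2R_n]$ being disjoint) contradicts the integrability just established. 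Hence $R^{-1}F(R)\to 0$, which is exactly \req{potentialto0}.

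I do not expect any serious obstacle here: the only nonelementary input is the Monotonicity Formula, which is already stated, and everything else is a one-line manipulation of a monotone nonnegative function with integrable weight $t^{-2}$.
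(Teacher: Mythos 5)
Your proof is correct and relies on exactly the same ingredients as the paper: the Monotonicity Formula with $\lambda=1$, the bound \req{lingro} (which makes $r\mapsto r^{-1}E(u,B_r)$ a bounded nondecreasing function), and the monotonicity of $t\mapsto\int_{B_t}(1-|u|^2)^2dx$. The only difference is in how you extract the conclusion: you first pass $R\to+\infty$ to get integrability of $t^{-2}F(t)$ on $(r,+\infty)$ and then run a dyadic contradiction argument, whereas the paper simply applies \req{monotonicity} on the dyadic interval $(r,2r)$ and observes directly that
\[
\frac{1}{r}\int_{B_r}\frac{(1-|u|^2)^2}{4}\,dx\;\leq\; C\left(\frac{1}{2r}E(u,B_{2r})-\frac{1}{r}E(u,B_r)\right),
\]
whose right-hand side tends to zero because a bounded nondecreasing function satisfies the Cauchy criterion. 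Both arguments are one-step and elementary; the paper's avoids the contradiction and the infinite sum, so it is a touch shorter, but there is no substantive gap or divergence in approach.
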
 
\begin{proof}
We apply \req{monotonicity} with $\lambda=1$, $r>0$ and $R=2r$ to obtain
$$ \frac{1}{r} \int_{B_r} \frac{(1-|u|^2)^2}{4}dx \leq 4 \int_r^{2r} \frac{1}{t^2}\bigg( \int_{B_t} \frac{(1-|u|^2)^2}{4}\,dx\bigg) dt\leq \frac{1}{2r}E(u,B_{2r})-\frac{1}{r}E(u,B_r) \, . $$
Since the left hand side of \req{monotonicity} is bounded and increasing, the right hand side above tends to zero as $r$ tends to infinity and the conclusion follows.
\end{proof}

The following description of any tangent map has been obtained in \cite[Theorem~C]{LW2}.  

\begin{proposition}\label{descripblowdown}
Let $u$ be a solution of \eqref{GL} satisfying \req{lingro}. Let $\phi \in \mathcal{T}_\infty(u)$ and let $R_n\to+\infty$ be an associated sequence of radii. 
Then $\phi(x)=\phi(x/|x|)$ for $x\not=0$ and $\phi_{|\mathbb{S}^2}$ is a smooth harmonic map with values into $\mathbb{S}^2$. Moreover there exists a subequence (not relabelled) 
 such that 
\begin{equation}\label{defectcone}
e_{R_n}(u_n)dx \mathop{\rightharpoonup}\limits^{*} \frac{1}{2}|\nabla\phi|^2dx+\nu \quad\text{as $n\to+\infty$}\,,
\end{equation}
weakly* as measures for some nonnegative Radon measure $\nu$. In addition, if $\nu\not\equiv 0$ there exists an integer $1\leq l<\infty$, $\{P_j\}_{j=1}^l\subset \mathbb{S}^2$ 
and $\{k_j\}_{j=1}^l\subset \NN^*$ 
such that 
\begin{itemize}
\item[(i)] ${\rm spt}(\nu)=\cup_{j=1}^l\overline{OP_j}$ where $\overline{OP_j}$ denotes the ray emitting from the origin to $P_j$, and for $1\leq j\leq l$, 
$$\nu\LL\overline{OP_j}=4\pi k_j \mathcal{H}^1\LL\overline{OP_j}\,;  $$
\item[(ii)] the following balancing condition holds :
$$\frac{1}{2}\int_{\mathbb{S}^2}x|\nabla \phi|^2d\mathcal{H}^2 +4\pi\sum_{j=1}^l k_j P_j =0\,. $$
\end{itemize}
\end{proposition}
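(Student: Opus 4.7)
The plan is to apply the compactness result of Proposition \ref{linwang} to the rescaled solutions $u_n(x):=u(R_n x)$, extract a tangent map $\phi$ and a defect measure $\nu$, and then use the monotonicity identity (Lemma \ref{monform}) together with the stress-energy conservation to establish in turn the $0$-homogeneity of $\phi$, the cone structure of $\nu$, and the balancing condition $(ii)$.

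\emph{Compactness and tangent map.} From Lemma \ref{monform} and \eqref{lingro}, the map $R\mapsto R^{-1}E(u,B_R)$ is nondecreasing and bounded, hence admits a limit $L:=\lim_{R\to+\infty}R^{-1}E(u,B_R)\in[0,+\infty)$. The scaling relation $r^{-1}E_{R_n}(u_n,B_r)=(R_n r)^{-1}E(u,B_{R_n r})$ gives $\sup_n E_{R_n}(u_n,B_r)\leq rL$ on every ball, so Proposition \ref{linwang} applies on each $B_r$. A diagonal extraction along an exhaustion of $\R^3$ yields a subsequence along which $u_n\rightharpoonup\phi$ in $H^1_{\rm loc}(\R^3;\R^3)$ with $\phi$ weakly harmonic, and $e_{R_n}(u_n)dx\overset{*}{\rightharpoonup}\mu:=\tfrac12|\nabla\phi|^2dx+\nu$ with $\nu=4\pi\theta\,\mathcal{H}^1\LL\Sigma$ and $\theta$ integer valued. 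The estimate $\int_{B_r}(1-|u_n|^2)^2dx\leq 4R_n^{-2}E_{R_n}(u_n,B_r)\to 0$ combined with Rellich compactness forces $|\phi|=1$ a.e.

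\emph{$0$-homogeneity and cone structure.} Choosing $0<\rho_1<\rho_2$ with $\mu(\partial B_{\rho_i})=0$ and applying Lemma \ref{monform} to $u_n$, the left-hand side tends to $\mu(B_{\rho_2})/\rho_2-\mu(B_{\rho_1})/\rho_1$; but $\mu(B_r)/r=\lim_n r^{-1}E_{R_n}(u_n,B_r)=L$ for generic $r$, so this difference vanishes. A rescaled version of Lemma \ref{nopotential} kills the potential term in the limit, and lower semicontinuity of the Dirichlet integral forces $\int_{B_{\rho_2}\setminus B_{\rho_1}}|x|^{-1}|\partial_r\phi|^2dx=0$. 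Hence $\phi$ is $0$-homogeneous, and H\'elein's regularity theorem applied to the weakly harmonic map $\omega:=\phi_{|\mathbb{S}^2}\colon \mathbb{S}^2\to \mathbb{S}^2$ yields smoothness. The monotonicity $r\mapsto r^{-1}\mu(B_r(x_0))$ passes to the limit for every center $x_0$, and equality of $r^{-1}\mu(B_r(0))$ with $L$ for all $r>0$ combined with the $0$-homogeneity of $\tfrac12|\nabla\phi|^2dx$ forces the rigidity: $\Sigma$ must be a cone with vertex at $0$. Rectifiability together with $\nu(B_1)\leq L<+\infty$ then reduces $\Sigma$ to a finite union of rays $\cup_{j=1}^l \overline{OP_j}$, while integer-valuedness of $\theta$ and constancy along each ray gives $\nu\LL \overline{OP_j}=4\pi k_j\,\mathcal{H}^1\LL\overline{OP_j}$ with $k_j\in \NN^*$, proving $(i)$.

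\emph{Balancing condition.} Since $u_n$ solves the rescaled Ginzburg-Landau equation, the stress-energy tensor $T_{ij}^{(n)}:=e_{R_n}(u_n)\delta_{ij}-\partial_i u_n\cdot\partial_j u_n$ is divergence-free, yielding for every $\zeta\in C^1_c(\R^3;\R^3)$ the identity
\[ \int_{\R^3}\Big[e_{R_n}(u_n)\,{\rm div}\,\zeta-\sum_{ij}\partial_j\zeta_i\,\partial_i u_n\cdot\partial_j u_n\Big]\,dx=0 \, . \]
Passing to the limit, the matrix-valued defect measure of $\partial_i u_n\cdot\partial_j u_n\,dx$ has, on the stationary $1$-rectifiable defect set of unit tangent $\tau$, the orthogonal-projection form $(\delta_{ij}-\tau_i\tau_j)\nu$ (a standard consequence of the vortex-line nature of concentration for stationary Ginzburg-Landau approximations, cf. Lin-Rivi\`ere). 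On each ray $\overline{OP_j}$ we have $\tau=\hat x=P_j$. Testing with $\zeta(x)=a\chi(|x|)$ for a constant $a\in\R^3$ and a radial cutoff $\chi$, the identity $\partial_r\phi\equiv 0$ eliminates the mixed smooth contribution; after integration in spherical coordinates, the smooth part reduces to $-\tfrac12\int_{\mathbb{S}^2}(a\cdot \sigma)|\nabla\omega|^2d\mathcal{H}^2$, while the defect part collapses (since $\hat x\cdot P_j=1$ on $\overline{OP_j}$) to $-4\pi\sum_j k_j(a\cdot P_j)$. Arbitrariness of $a$ then yields $(ii)$. The hardest step is this last identification of the orthogonal-projection structure of the matrix-valued defect, which encodes the rank-one-in-tangent-direction concentration typical of vortex lines; the remaining steps are routine applications of monotonicity, compactness, and classical regularity.
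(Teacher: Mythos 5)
The paper does not actually prove this proposition: it is stated verbatim as a quotation of \cite[Theorem~C]{LW2} (``The following description of any tangent map has been obtained in...''), so there is no in-paper argument to compare yours against. What one can do is assess whether your reconstruction is a faithful sketch of the Lin--Wang proof, and on the whole it is: rescaled equiboundedness $\sup_n E_{R_n}(u_n,B_r)\leq rL$ from Lemma~\ref{monform}, diagonal extraction via Proposition~\ref{linwang}, constancy of $r\mapsto r^{-1}\mu(B_r(0))$ to kill both the radial-derivative and potential terms in the rescaled monotonicity formula (hence $0$-homogeneity of $\phi$), H\'elein regularity for $\omega=\phi|_{\mathbb{S}^2}$, and the limiting stress--energy identity for the balancing condition. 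The computation at the end, showing that the mixed term $\sum a_i\hat x_j\,\partial_i u_n\!\cdot\!\partial_j u_n$ contributes nothing in the limit (from $\partial_r\phi\equiv 0$ on the smooth part and from $\hat x=\tau=P_j$ on each ray for the defect part), is correct and does produce the balancing identity.

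Two steps are asserted rather than proved, and both are exactly the substantive content of \cite{LW1,LW2}. First, the cone structure of $\Sigma$: constancy of $r^{-1}\mu(B_r(0))$ alone does not make $\Sigma$ a cone. One needs that $\nu$ is a \emph{stationary} rectifiable $1$-varifold (this is what the limiting stress--energy identity yields, once the matrix defect is identified) so that the varifold monotonicity formula applies at every centre; equality at $0$ then gives the rigidity, integer multiplicity $\theta\geq 1$ gives finitely many rays, and constancy of the density along each ray follows from the same monotonicity. In your sketch stationarity is invoked later for $(ii)$ but not fed back into the cone argument, so the order of logic needs to be tightened. Second, the identification of the matrix-valued defect as the orthogonal projection $(\delta_{ij}-\tau_i\tau_j)\nu$; you rightly single this out as the key input (and it is what makes the stress--energy argument close), but ``cf.\ Lin--Rivi\`ere'' papers over a genuine proof that in the Ginzburg--Landau setting is carried out in \cite{LW1,LW2} via quantization of bubbles and a careful limit of the Pohozaev/stress--energy identities. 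Modulo these two black boxes, the proposal is sound and reflects the actual structure of the cited proof.
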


Under the assumption \req{lingro} we can apply Proposition \ref{descripblowdown} to any local minimizer of $E(\cdot)$. 
Now we claim that the local minimality of $u$ implies the strong convergence of the scaled maps $\{u_n\}$ to the associated tangent map.    

\begin{proposition}\label{propcomp}
Let $u\in H^1_{\rm loc}(\R^3;\R^3)$ be a local minimizer of $E(\cdot)$ satisfying \req{lingro}. Let $\phi \in \mathcal{T}_\infty(u)$ and let $R_n\to+\infty$ 
be the associated sequence of radii given by Proposition~\ref{descripblowdown}.       
Then $u_n\to \phi$ strongly in $H^1_{\rm loc}(\R^3)$ as $n\to+\infty$ and 
\begin{equation}\label{convmeas}
e_{R_n}(u_n)dx\mathop{\rightharpoonup}\limits^{*}\frac{1}{2}|\nabla\phi|^2dx
\end{equation}
weakly* as measures. 
\end{proposition}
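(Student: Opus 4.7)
The statement amounts to showing that the defect measure $\nu$ from Proposition~\ref{descripblowdown} vanishes identically: once $\nu\equiv 0$, the weak convergence $u_n\rightharpoonup\phi$ together with the convergence $\int_{B_R}|\nabla u_n|^2\to\int_{B_R}|\nabla\phi|^2$ on a.e. ball (and Lemma~\ref{nopotential} applied at scale $R_n$ to absorb the potential term) yields strong $H^1_{\rm loc}$ convergence, and \eqref{convmeas} is then immediate. My plan is to argue by contradiction: assume $\nu\not\equiv 0$ and derive a violation of the local minimality of $u$.

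Suppose $\nu\not\equiv 0$. By Proposition~\ref{descripblowdown}(i) we fix one defect ray $\overline{OP_1}$ with weight $k_1\ge 1$, choose $y_0\in\overline{OP_1}\setminus\{0\}$, and pick $\rho>0$ small enough that $\overline{B_\rho(y_0)}$ stays away from the origin and meets no other defect ray. For a.e. such $\rho$ one also has $\nu(\partial B_\rho(y_0))=0$, so passing to the limit in \eqref{defectcone} yields
\[
\lim_{n\to\infty}E_{R_n}(u_n,B_\rho(y_0))=\frac{1}{2}\int_{B_\rho(y_0)}|\nabla\phi|^2\,dx+8\pi k_1\rho.
\]
Scaling the local minimality of $u$ on $\R^3$ makes $u_n$ a local minimizer of $E_{R_n}$, so any competitor $v_n$ agreeing with $u_n$ on $\partial B_\rho(y_0)$ satisfies $E_{R_n}(u_n,B_\rho(y_0))\le E_{R_n}(v_n,B_\rho(y_0))$. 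The core of the proof is the ``dipole removing'' construction of such a $v_n$ satisfying
\[
\limsup_{n\to\infty}E_{R_n}(v_n,B_\rho(y_0))\le \frac{1}{2}\int_{B_\rho(y_0)}|\nabla\phi|^2\,dx+C\rho^3
\]
with $C$ independent of $n$ and $\rho$. Combining the three displays yields $8\pi k_1\le C\rho^2$, which contradicts $k_1\ge 1$ for $\rho$ small, and forces $\nu\equiv 0$.

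The competitor is designed as follows. Since $\phi$ is smooth on $\overline{B_\rho(y_0)}$ (the origin being excluded), one sets $v_n=\phi$ on a slightly smaller ball $B_{\rho(1-\delta)}(y_0)$; this accounts for the leading $\frac{1}{2}\int|\nabla\phi|^2$ term and, away from the two points $a,b\in\partial B_\rho(y_0)\cap\overline{OP_1}$ where the defect ray crosses the boundary, small-energy regularity (Lemma~\ref{epsregularity}) guarantees $u_n\to\phi$ smoothly so that a radial or geodesic-on-$\mathbb{S}^2$ interpolation across the thin annulus $B_\rho(y_0)\setminus B_{\rho(1-\delta)}(y_0)$ costs only $O(\delta\rho^3)$. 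The main obstacle is the transition near $a$ and $b$, where the trace $u_n|_{\partial B_\rho(y_0)}$ develops a $2$-D vortex singularity of topological degree $\pm k_1$ that cannot be absorbed by a smooth interior extension; this local obstruction is handled by inserting small plugs built from rescaled equivariant solutions $U$ of Proposition~\ref{Radsol} placed at scale $1/R_n$ inside $B_\rho(y_0)$. These plugs contribute only $o_n(1)$ in energy because $R^{-1}E(U,B_R)\to 4\pi$, and the delicate technical point is to balance the widths of the transition annulus and the plug scales so that the overall competitor energy stays within $\frac{1}{2}\int|\nabla\phi|^2+C\rho^3$ uniformly in $n$.
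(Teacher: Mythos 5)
Your overall strategy---argue by contradiction from $\nu\not\equiv 0$, build a comparison map, and use local minimality to force the defect density $k_1$ to vanish---is the same as the paper's, and your computation $\nu(B_\rho(y_0))=8\pi k_1\rho$ versus an $O(\rho^3)$ comparison energy is in principle a valid way to reach a contradiction. The gap is in the construction of the comparison map near the piercing points $a,b$. You describe the obstruction there as a ``2-D vortex of degree $\pm k_1$'' in the trace $u_n|_{\partial B_\rho(y_0)}$, but this is not what the Lin--Wang defect looks like: the defect along the ray arises from $\mathbb S^2$-valued harmonic bubbles concentrating at $0$-scale (a $\pi_2(\mathbb S^2)$ phenomenon), not from a winding degree of the trace on small circles around $a$ or $b$. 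Indeed the trace stays close to the continuous map $\phi$ on every circle $\partial D_\epsilon(a)\subset\partial B_\rho(y_0)$ of fixed radius $\epsilon$, so there is no $k_1$-degree to ``untwist.'' Because the picture is wrong, the role of the plugs is unclear, and the estimate that the plugs cost $o_n(1)$ ``because $R^{-1}E(U,B_R)\to 4\pi$'' does not hold: inserting a rescaled $U$ at scale $1/R_n$ into a ball of radius $\epsilon$ contributes $R_n^{-1}E(U,B_{\epsilon R_n})\approx 4\pi\epsilon$, which is $O(\epsilon)$, not $o_n(1)$; and if instead one shrinks $\epsilon=\epsilon_n\to 0$ the plug's boundary data no longer matches the trace of $u_n$ on a fixed sphere. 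The real difficulty---that for finite $n$ the trace of $u_n$ near $a,b$ carries energy concentrating at scale $1/R_n$ and is not $C^1$-close to $\phi$ there---is not addressed by this device, since $U$'s boundary values look like $T\,x/|x|$ and do not match $u_n$.

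The paper circumvents all of this by changing the geometry: it moves the comparison region far out along the defect ray to a cube $Q_4(x_\sigma)$ where $\phi$ is within $\sigma$ of a constant $N$, projects $u_n$ to $\mathbb S^2$ outside a thin tube $T_\delta$, interpolates to the constant $N$ outside $T_{2\delta}$, and then kills the defect \emph{inside} the tube by choosing good slices $x_1=r_n^\pm$ (Fubini-selected so that the 2D cross-sectional energy of $v_n$ on $\{x_1=r_n^\pm\}\cap T_{2\delta}$ limits to at most $4\pi k_1+C_\delta\sigma$) and capping with two cones that cost $C\delta\cdot(\text{cross-sectional energy})$, leaving constant $N$ in between at zero cost. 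No plug is needed because the construction never has to reproduce the concentrated trace of $u_n$ on the boundary of the cut-out region: $v_n$ already equals $u_n$ in $T_{\delta/2}$ where concentration happens, and the modification only collapses the middle of the tube. This is the genuinely missing idea in your proposal.
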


\noindent {\bf Proof.}  In view of Proposition \ref{descripblowdown}, it suffices to prove that the defect measure $\nu$ in \eqref{defectcone} actually vanishes. We 
shall achieve it using a comparison argument. First we improve the convergence of $u_n$ away from ${\rm spt}(\nu)$. 
\vskip5pt

\noindent{\it Step 1.} First observe that $R_n^2(1-|u_n|^2)^2\to 0$ in $L^1_{\rm loc}(\R^3)$ by scaling and Lemma \ref{nopotential}. Next we claim 
that $u_n\to \phi$ in $C^1_{\rm loc}(\R^3\setminus({\rm spt}(\nu)\cup\{0\}))$.  Fix a ball 
$B_{4\delta}(x_0)\subset\subset \R^3\setminus({\rm spt}(\nu)\cup\{0\})$ with arbitrary center and $\delta$ to be chosen. Since $\phi$ is smooth away from the origin, 
we can choose $\delta$ small such that $\int_{B_{4\delta}(x_0)}|\nabla\phi|^2<4\delta \eta_0$ where the constant 
$\eta_0$ is given by Lemma \ref{epsregularity}. 
In view of \eqref{defectcone}, we have $\int_{B_{4\delta}(x_0)}e_{R_n}(u_n) \to \frac{1}{2}\int_{B_{4\delta}(x_0)}|\nabla\phi|^2$. In particular 
$\int_{B_{4\delta}(x_0)}e_{R_n}(u_n)\leq 4\delta\eta_0$ for $n$ large enough. By Lemma \ref{epsregularity}, we infer that $|\nabla u_n|\leq C_{\delta,x_0}$ and $|u_n|\geq 1/2$ 
in $B_{2\delta}(x_0)$ for $n$ large and a constant $C_{\delta,x_0}$ independent of $n$. Since $u_n$ satisfies \eqref{GLresc} (with $R=R_n$), 
setting $\rho_n:=1-|u_n|^2$, we have $0\leq \rho_n\leq 1$ and 
$-\Delta\rho_n+R_n^2\rho_n\leq 2 C^2_{\delta,x_0}$ in $B_{2\delta}(x_0)$. 
By a slight modification of Lemma 2 in \cite{BBH}, we infer that $\rho_n\leq C'_{\delta,x_0}R_n^{-2}$ in $B_\delta(x_0)$ for some constant $C'_{\delta,x_0}$ independent of $n$. 
Going back to \eqref{GLresc} we deduce that $|\Delta u_n|\leq C'_{\delta,x_0}$ in $B_\delta(x_0)$. 
Using standard $W^{2,p}_{\rm{loc}}$-regularity  
and the Sobolev embedding in $C^{1,\alpha}$-spaces, we finally conclude that $u_n\to \phi$ in $C^1(B_{\delta/2}(x_0))$. 
\vskip5pt

\noindent{\it Step 2.}  We will argue by contradiction and will assume that $\nu\not\equiv 0$ so that $k_1\geq 1$. 
Without loss of generality we may also assume that $P_1=(1,0,0)$ and $\phi(P_1)=(0,0,1)=:N$. We will construct for $n$ sufficiently large comparison maps $w_n$ which, roughly speaking, 
agree with $u_n$ except in a small cylinder around the $x_1$ axis, where they are constantly equal to $N$ and with smaller energy.
We consider two small parameters $0<\delta<<1$  and $0<\sigma<<1$.  In view of the explicit form of $\phi$ and $\nu$, we can find $x_\sigma \in \overline{OP_1}$ 
with $|x_\sigma|$ as large as needed   
such that $\overline Q_4(x_\sigma)\cap \overline{OP_j}=\emptyset$ for each $2\leq j\leq l$, 
\begin{equation}\label{controlphi}
\phi(Q_4(x_\sigma))\subset B_\sigma(N)\quad\text{and}\quad \int_{Q_4(x_\sigma)}|\nabla \phi|^2<\sigma\,.
\end{equation} 
Here we use the notation $Q_\rho(x_\sigma)=x_\sigma+\rho(-1/2,1/2)^3$ for $\rho>0$. Throughout the proof 
$T_\delta:=\R\times B^{(2)}_\delta(0)\subset\R^3$ will denote the infinite cylinder of size $\delta$ around the $x_1$ axis.  
In view of Step~1, for $n$ large enough 
\begin{equation}\label{controlun}
|u_n-\phi|<\sigma\quad\text{in $Q_4(x_\sigma)\setminus T_{\delta/2}$}\,, 
\end{equation}
and in particular  $|u_n|$ does not vanish in $Q_4(x_\sigma)\setminus T_{\delta/2}$ and it is actually as close to one as we want. 

Consider a cut-off function $\chi_1\in C^\infty_c(Q_4(x_\sigma);[0,1])$ satisfying $\chi_1\equiv 1$ in $Q_3(x_\sigma)$ 
and set  $\psi_\delta(x):=\min\{\delta^{-1}\chi_1(x)(2|x'|-\delta)^+, 1\}$ using the notation $x=(x_1,x')$. 
Then we define for $x\in Q_4(x_\sigma)$, 
$$\bar u_n(x) :=\psi_\delta (x)\,\frac{u_n(x)}{|u_n(x)|} +(1-\psi_\delta(x))u_n(x)\,.$$
Note that  $\bar u_n=u_n$ in a neighborhood of $\partial Q_4(x_\sigma)$, $\bar u_n=u_n$ in $Q_4(x_\sigma)\cap T_{\delta/2}$, and $(1-|\bar u_n|^2)^2\leq (1-|u_n|^2)^2$, 
because the double well potential is locally convex near its minima. 
Then we easily infer from Step 1 that $\bar u_n \to \phi$ in $W^{1,\infty}(Q_4(x_\sigma)\setminus T_{\delta/2})$ 
and
$$e_{R_n}(\bar u_n)dx\LL Q_4(x_\sigma)\mathop{\rightharpoonup}\limits^{*} \frac{1}{2}|\nabla\phi|^2dx\LL Q_4(x_\sigma)+\nu\LL Q_4(x_\sigma)$$
weakly* as measures. 
Now consider a  second cut-off function $\chi_2\in C^\infty_c(Q_3(x_\sigma);[0,1])$ satisfying $\chi_2\equiv 1$ in $Q_2(x_\sigma)$ 
and set $\tilde \psi_\delta(x)=\min\{\delta^{-1}\chi_2(x)(|x'|-\delta)^+, 1\}$. Define for $x\in Q_4(x_\sigma)$, 
$$v_n(x):=\begin{cases} 
\ds \frac{\tilde \psi_\delta(x) N+(1-\tilde \psi_\delta(x))\bar u_n(x)}{|\tilde \psi_\delta(x) N+(1-\tilde \psi_\delta(x))\bar u_n(x)|} & \text{if $x\in Q_3(x_\sigma) \setminus T_\delta$}\,,\\[8pt]
\bar u_n(x) & \text{if $x\in (Q_4(x_\sigma)\setminus Q_3(x_\sigma)) \cup (Q_4(x_\sigma)\cap T_\delta)$}\,,
\end{cases}$$
and 
$$\phi_\delta(x):= \frac{\tilde \psi_\delta(x) N+(1-\tilde \psi_\delta(x))\phi(x)}{|\tilde \psi_\delta(x) N+(1-\tilde \psi_\delta(x))\phi(x)|}\,.$$
Note that $\phi_\delta$ and $v_n$ are well defined  and smooth (Lipschitz) thanks to \eqref{controlphi} and \eqref{controlun}. Moreover 
$v_n=u_n$ both in a neighborhood of $\partial Q_4(x_\sigma)$ and in $Q_4(x_\sigma) \cap T_{\delta/2}$, and $v_n\equiv N$ in $Q_2(x_\sigma)\setminus T_{2\delta}$. 
From the construction of $\bar u_n$, we derive that 
$v_n \to\phi_\delta $ in $W^{1,\infty}(Q_4(x_\sigma)\setminus T_{\delta/2})$ and 
\begin{equation}\label{concvn}
e_{R_n}(v_n)dx\LL Q_4(x_\sigma)\mathop{\rightharpoonup}\limits^{*} \frac{1}{2}|\nabla\phi_\delta|^2dx\LL Q_4(x_\sigma)+\nu\LL Q_4(x_\sigma)
\end{equation}
weakly* as measures. Since $\nu$ does not charge the boundary of $Q_\rho(x_\sigma)$ for every $\rho>0$, we have 
$$\int^{|x_\sigma|+1}_{|x_\sigma|+1/2}\bigg(\int_{\{x_1=r\}\cap T_{2\delta}}e_{R_n}(v_n)\bigg)dr\mathop{\longrightarrow}\limits_{n\to+\infty} 
\frac{1}{2}\int_{\{|x_\sigma|+1/2<x_1<|x_\sigma|+1\}\cap T_{2\delta}}|\nabla\phi_\delta|^2 +2\pi k_1\,.$$ 
On the other hand, one may derive from the explicit form of $\phi_\delta$ and \eqref{controlphi} that 
\begin{equation}\label{smallphidel}
\int_{Q_4(x_\sigma)}|\nabla\phi_\delta|^2\leq C_\delta \sigma\,,
\end{equation}
where $C_\delta$ denotes a constant independent of $\sigma$. 
Hence we can find $r_n^+\in [|x_\sigma|+1/2,|x_\sigma|+1]$ such that 
$$\limsup_{n\to +\infty} \int_{\{x_1=r_n^+\}\cap T_{2\delta}}e_{R_n}(v_n)\leq 4\pi k_1 +C_\delta \sigma\,.$$
Arguing in the same way, we find $r_n^-\in[|x_\sigma|-1,|x_\sigma|-1/2]$ such that 
$$\limsup_{n\to +\infty} \int_{\{x_1=r_n^-\}\cap T_{2\delta}}e_{R_n}(v_n)\leq 4\pi k_1 +C_\delta \sigma\,.$$
Next we introduce the sets 
\begin{align*}
C_n^+&:= T_{2\delta}\cap\big\{r_n^+-2\delta\leq x_1\leq r_n^+\,,\,|x'|\leq x_1-(r_n^+-2\delta)\big\}\,,\\
C_n^-&:= T_{2\delta}\cap\big\{r_n^-\leq x_1\leq r_n^-+2\delta\,,\,|x'|\leq (r_n^-+2\delta)-x_1\big\}\,,\\
D_n&:=T_{2\delta}\cap \{x\in T_{2\delta},\,x_1\in(r_n^-,r_n^+) \}\,.
\end{align*}
Define for $x\in Q_4(x_\sigma)$ and $n$ large enough, 
$$w_n(x)= \begin{cases}
v_n(x) & \text{if  $x\in Q_4(x_\sigma)\setminus D_n$}\\[8pt]
\ds v_n\bigg(r_n^+,\frac{2\delta x'}{x_1-(r_n^+-2\delta)}\bigg) &\text{if $x\in C_n^+$\,,}\\[10pt]
\ds v_n\bigg(r_n^-,\frac{2\delta x'}{(r_n^-+2\delta)-x_1}\bigg) &\text{if $x\in C_n^-$\,,}\\[8pt]
N & \text{if $x\in D_n\setminus (C_n^+\cup C_n^-)$\,.}
\end{cases}
$$
One may check that $w_n\in H^1(Q_4(x_\sigma);\R^3)$ and $w_n=u_n$ in a neighborhood of $\partial Q_4(x_\sigma)$. Moreover,  straightforward 
computations yield
$$\int_{C_n^+}e_{R_n}(w_n)\leq C\delta \int_{\{x_1=r_n^+\}\cap T_{2\delta}} e_{R_n}(v_n)\quad\text{and}\quad 
\int_{C_n^-}e_{R_n}(w_n)\leq C\delta \int_{\{x_1=r_n^-\}\cap T_{2\delta}} e_{R_n}(v_n)\,,$$
for some absolute constant $C$. Recalling \eqref{concvn}, \eqref{smallphidel}, the fact that $\nu$ does not charge the boundary of $Q_\rho(x_\sigma)$ for every $\rho>0$ and 
 $Q_4(x_\sigma)=(Q_4(x_\sigma) \setminus D_n) \cup (C_n^+\cup C_n^-) \cup (D_n \setminus (C_n^+\cup C_n^-)) $, we finally obtain
\begin{equation}\label{enegcompmap}
\limsup_{n\to+\infty}\int_{Q_4(x_\sigma)}e_{R_n}(w_n) \leq 12\pi k_1+C\delta+C_\delta \sigma \,, 
\end{equation}
for some constant $C$ independent $\sigma$ and $\delta$, and some constant $C_\delta$ independent of $\sigma$. 
\vskip5pt

\noindent{\it Step 3.} From the local minimality of $u$, we infer that 
$$\int_{Q_4(x_\sigma)}e_{R_n}(u_n)\leq  \int_{Q_4(x_\sigma)}e_{R_n}(w_n) \,.$$
Using \eqref{defectcone} and \eqref{enegcompmap} we let $n\to+\infty$ in the above inequality to derive 
$$16 \pi k_1 \leq \nu(Q_4(x_\sigma))+\int_{Q_4(x_\sigma)}\frac12 |\nabla \phi|^2 dx=\lim_{n \to \infty} \intt{Q_4(x_\sigma)} e_{R_n}(u_n) \leq  12\pi k_1+C\delta+C_\delta \sigma\,.$$
Passing successively to the limits $\sigma\to0$ and $\delta\to 0$, we conclude that $k_1=0$. This contradicts our assumption $k_1\geq 1$ and the proof is complete.   
\prbox

\begin{corollary}\label{strtangmap}
Let $u\in H^1_{\rm loc}(\R^3;\R^3)$ be a nonconstant local minimizer of $E(\cdot)$ satisfying \req{lingro}. Then any $\phi \in \mathcal{T}_\infty(u)$ 
is of the form $\phi(x)=Tx/|x|$ for some $T\in O(3)$. 
\end{corollary}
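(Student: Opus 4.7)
By Propositions~\ref{descripblowdown} and~\ref{propcomp}, along the sequence $R_n\to+\infty$ associated to $\phi$ the scaled maps $u_n(x):=u(R_nx)$ converge strongly in $H^1_{\rm loc}(\R^3;\R^3)$ to $\phi(x)=\omega(x/|x|)$, with $\omega:\mathbb{S}^2\to\mathbb{S}^2$ a smooth harmonic map, and the defect measure $\nu$ in \eqref{defectcone} vanishes. In particular $\phi$ is a $0$-homogeneous harmonic map into $\mathbb{S}^2$, smooth on $\R^3\setminus\{0\}$. My strategy is to prove that $\phi$ is a nonconstant local minimizer of the Dirichlet energy $E_\infty$ on $H^1_{\rm loc}(\R^3;\mathbb{S}^2)$, and then to invoke \cite[Theorem~2.2]{AL}: every such map has the form $\tilde T(x-x_0)/|x-x_0|$ for some $\tilde T\in O(3)$ and $x_0\in\R^3$; zero-homogeneity of $\phi$ forces $x_0=0$, yielding the claim.

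Nonconstancy is immediate: if $\phi$ were constant then $R_n^{-1}E(u,B_{R_n})=\int_{B_1}e_{R_n}(u_n)\to 0$, and monotonicity of $R\mapsto R^{-1}E(u,B_R)$ would force $E(u,B_1)=0$, contradicting the real-analyticity and nonconstancy of $u$.

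To establish local minimality, fix $\psi\in H^1_{\rm loc}(\R^3;\mathbb{S}^2)$ coinciding with $\phi$ outside $B_R$; after enlarging $R$ slightly we may assume $\psi\equiv\phi$ in a neighborhood of $\partial B_R$. Choose $R'>R$ and, by a Fubini argument on the annulus $B_{R'}\setminus B_R$, extract along a subsequence radii $r_n\to r\in[R,R']$ such that $u_n\to\phi$ in $H^1(\partial B_{r_n})$. In the rescaled variable $y=x/R_n$, define a competitor $\tilde v_n:B_{R'}\to\R^3$ equal to $\psi$ on $B_R$, equal to $u_n$ on $B_{R'}\setminus B_{r_n}$, and built on the transition layer $B_{r_n}\setminus B_R$ by a Lipschitz interpolation from $\phi$ to $u_n$. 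The unscaled map $\tilde u_n(x):=\tilde v_n(x/R_n)$ coincides with $u$ near $\partial B_{R'R_n}$, hence the local minimality of $u$ together with the scaling identity $R_n^{-1}E(w,B_{R'R_n})=\int_{B_{R'}}e_{R_n}(w(R_n\cdot))\,dy$ yields
\[
\int_{B_{r_n}}e_{R_n}(u_n)\,dy \;\leq\; \int_{B_R}\tfrac12|\nabla\psi|^2\,dy \;+\; \int_{B_{r_n}\setminus B_R}e_{R_n}(\tilde v_n)\,dy.
\]
Passing to the limit (the last integral converges to $\int_{B_r\setminus B_R}\tfrac12|\nabla\phi|^2\,dy$) and using $\psi=\phi$ on $B_r\setminus B_R$ one obtains $\int_{B_R}\tfrac12|\nabla\phi|^2\leq\int_{B_R}\tfrac12|\nabla\psi|^2$, which is the desired local minimality.

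The main obstacle is precisely the control of $\int_{B_{r_n}\setminus B_R}e_{R_n}(\tilde v_n)$ on the transition layer. The gradient contribution is routine from the strong $H^1$-convergence of $u_n$ to $\phi$ on compacts away from $\{0\}$. The potential contribution is more delicate: it relies on the pointwise estimate $|1-|u_n|^2|\leq CR_n^{-2}$ on compacts disjoint from the origin, established in Step~1 of the proof of Proposition~\ref{propcomp} (and applicable here precisely because $\nu\equiv 0$). Combined with the polar decomposition $u_n=|u_n|(u_n/|u_n|)$ in the annulus---well-defined for large $n$ since $|u_n|\geq 1/2$ there by $\epsilon$-regularity---this allows one to interpolate essentially in the $\mathbb{S}^2$-valued phase while pinning $|\tilde v_n|$ close to one, so that $R_n^2(1-|\tilde v_n|^2)^2\to 0$ uniformly in the annulus.
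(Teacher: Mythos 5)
Your proposal is correct and follows the same underlying strategy as the paper: transfer minimality from $u$ to the tangent map $\phi$ by a comparison argument across a transition annulus, then invoke a classification theorem for minimizing $0$-homogeneous $\mathbb{S}^2$-valued maps. There are two modest variations. First, the paper proves minimality of $\phi$ on $B_1$ with its own boundary trace and cites Theorems~7.3--7.4 of \cite{BCL}, whereas you establish local minimality of $\phi$ on all of $\R^3$ and cite Theorem~2.2 of \cite{AL}; for a $0$-homogeneous map the two formulations are interchangeable by scaling and extension, so either classification theorem does the job. Second, the interpolation is set up in mirror image: the paper shrinks the competitor $\varphi$ into $B_{1-2\delta_n}$ and passes to $u_n$ through a shrinking two-layer annulus near $\partial B_1$ (a phase layer via geodesic interpolation, then a modulus layer), while you keep $\psi$ on a fixed $B_R$ and transition to $u_n$ on a fixed annulus $B_{r_n}\setminus B_R$; the Fubini slicing you introduce is in fact superfluous here, because Step~1 of the proof of Proposition~\ref{propcomp} (with $\nu\equiv 0$) already gives local $C^1$-convergence away from the origin. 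Your sketch correctly identifies all the technical ingredients --- the local $C^1$ estimates, the pointwise $\mathcal{O}(R_n^{-2})$ bound on $1-|u_n|^2$, the polar decomposition, and convexity of the potential near $|u|=1$ --- but note that a pure-phase interpolation with $|\tilde v_n|\equiv 1$ does not match $|u_n|$ at the outer boundary of the layer, so a separate modulus sub-layer (exactly as in the paper's three-zone construction of $\varphi_n$) is still needed to make the competitor admissible.
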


\noindent {\bf Proof.} {\it Step 1.} 
First we claim that any $\phi \in \mathcal{T}_\infty(u)$ is energy minimizing in $B_1$, {\it  i.e.},
 \begin{equation}\label{minphi}
 \int_{B_1}|\nabla\phi|^2dx \leq \int_{B_1}|\nabla \varphi |^2dx \quad\text{for all $\varphi\in H^1(B_1;S^2)$ such that $\varphi_{|\partial B_1}=\phi$}\,.
 \end{equation}
Let $R_n\to+\infty$ be the sequence of radii  given by Proposition \ref{descripblowdown}, and let $\{u_n\}$ be the associated sequence of scaled maps. It follows from Step 2 in the 
previous proof that 
$$ \int_{B_1}e_{R_n}(u_n)dx \to \frac{1}{2}\int_{B_1}|\nabla \phi|^2dx$$
as $n\to+\infty$. In particular,
\begin{equation}\label{vanpot}
R_n^{2}\int_{B_1}(1-|u_n|^2)^2dx\to 0\,.
\end{equation}
In view of the local minimality of $u$, it suffices to prove that for 
any $\varphi \in H^1_\phi(B_1;S^2)$, there exists a sequence $\varphi_n\in H^1_{u_n}(B_1;\R^3)$ such that 
\begin{equation}\label{approx}
 \int_{B_1}e_{R_n}(\varphi_n)dx \to \frac{1}{2}\int_{B_1}|\nabla\varphi|^2dx\,.
 \end{equation}
We proceed as follows. From the previous proof we know that $u_n\to \phi$ uniformly  in the annulus $K:=\overline B_1\setminus B_{1/2}$. In particular, 
$|u_n|\geq 1/2$ in $K$ for $n$ large and setting $v_n:=u_n/|u_n|$, 
$$\delta_n:=\|v_n-\phi\|_{L^\infty(K)}+\|1-|u_n|^2\|_{L^\infty(K)}\mathop{\longrightarrow}\limits_{n\to+\infty} 0\,. $$
Denote $\mathcal{D}:=\{(s_0,s_1)\in\mathbb{S}^2\times \mathbb{S}^2\,,\,|s_0-s_1|<1/4\}$ and consider a continuously differentiable mapping 
$\Pi:\mathcal{D}\times [0,1]\to \mathbb{S}^2$ satisfying 
$$\Pi(s_0,s_1,0)=s_0\,,\quad \Pi(s_0,s_1,1)=s_1\,,\quad \bigg|\frac{\partial\Pi}{\partial t}(s_0,s_1,t)\bigg|\leq C|s_0-s_1|\,,$$
{\it e.g.},  the map giving geodesic convex combinations between points $s_0$ and $s_1$ on $\mathbb{S}^2$.

Given $\varphi \in H^1_\phi(B_1;S^2)$, we define for $n$ large enough, 
$$\varphi_n(x)=\begin{cases}
\ds\varphi\bigg(\frac{x}{1-2\delta_n}\bigg) & \text{for $x\in B_{1-2\delta_n}$}\,,\\[10pt]
\ds \Pi\bigg(v_n(x),\phi(x), \frac{1-\delta_n-|x|}{\delta_n}\bigg) & \text{for $x\in B_{1-\delta_n}\setminus B_{1-2\delta_n}$}\,,\\[10pt]
\ds \bigg(\frac{1-|x|}{\delta_n}+|u_n(x)|\frac{|x|-1+\delta_n}{\delta_n}\bigg)v_n(x) & \text{for $x\in B_1\setminus B_{1-\delta_n}$}\,.
\end{cases}
$$
One may easily check that $\varphi_n\in H^1(B_1;\R^3)$ and that 
\begin{equation}\label{energapprox}
\int_{B_1}e_{R_n}(\varphi_n)dx=\frac{1-2\delta_n}{2}\int_{B_1}|\nabla\varphi|^2dx+\frac{1}{2}\int_{B_{1-\delta_n}\setminus B_{1-2\delta_n}}|\nabla \varphi_n|^2dx+\int_{B_1\setminus B_{1-\delta_n}}e_{R_n}(\varphi_n)dx\,.
\end{equation}
Straighforward computations yield
$$\int_{B_{1-\delta_n}\setminus B_{1-2\delta_n}}|\nabla \varphi_n|^2dx \leq C \int_{B_{1-\delta_n}\setminus B_{1-2\delta_n}}\bigg(|\nabla \varphi|^2+|\nabla u_n|^2+\delta_n^{-2}|
v_n-\phi|^2\bigg)dx\mathop{\longrightarrow}\limits_{n\to+\infty} 0\,,$$
and 
$$\int_{B_{1}\setminus B_{1-\delta_n}}e_{R_n}(\varphi_n)dx \leq C \int_{B_{1}\setminus B_{1-\delta_n}}\bigg(|\nabla u_n|^2+(\delta_n^{-2}+R_n^{2})
(1-|u_n|^2)^2\bigg)dx\mathop{\longrightarrow}\limits_{n\to+\infty} 0\,, $$
where we used the fact $(1-|\varphi_n|^2)^2\leq (1-|u_n|^2)^2$ for $n$ large enough, again by convexity of the double well potential near its minima, and \eqref{vanpot} in the last estimate. 
In view of  \eqref{energapprox}, it completes the proof of \eqref{approx}. 
\vskip5pt

\noindent{\it Step 2.} In view of the monotonicity with respect to $R$ of  $R^{-1}E(u,B_R)$, if $u$ is nonconstant then \eqref{convmeas} yields 
\begin{equation}\label{limeng}
0<\lim_{R\to+\infty} R^{-1}E(u,B_{R})=\lim_{n\to+\infty} E_{R_n}(u_n,B_1)=\frac{1}{2} \int_{B_1}|\nabla\phi|^2dx\,,
\end{equation}
and thus $\phi$ is nonconstant.  Then the conclusion follows from Theorem 7.3 and Theorem 7.4 in \cite{BCL} together with \eqref{minphi}.  
\prbox
\vskip10pt

\noindent{\bf Proof of Theorem \ref{quantization}.} Let $R_n\to+\infty$ be an arbitrary sequence of radii. By  \req{lingro}, Proposition \ref{descripblowdown}, 
Proposition \ref{propcomp} and Corollary \ref{strtangmap}, we can find a subsequence (not relabelled) and $T\in O(3)$ such that the sequence of scaled maps $u_n(x)=u(R_nx)$ 
converges strongly in $H^1_{\rm loc}(\R^3;\R^3)$ to $\phi(x)=Tx/|x|$. Therefore  \eqref{limeng} gives 
$R^{-1}E(u,B_{R})\to 4\pi$ as $R\to+\infty$, 
and the proof is complete.\prbox

%%%%%%%%%%%%%%%%%%%%%%%%%%%%%%%%%%%%%%%%%%%%%%%%%%%%%%%%%%
%%%%%%%%%%%%%%%%%%%%%%%%%%%%%%%%%%%%%%%%%%%%%%%%%%%%%%%%%

\section{Asymptotic symmetry}
In order to study the asymptotic behaviour of local minimizers we first derive some decay properties of solutions to \req{GL} at infinity. 
It will be clear that the crucial ingredients are \req{lingro}, the $H^1_{\rm{loc}}(\R^3;\R^3)$ compactness of the scaled maps and the small energy regularity lemma recalled in Section 3. 
Then we bootstrap the first order estimates to get higher order estimates and compactness of the rescaled maps and their derivatives of all orders. 
Finally we prove a decay property of the radial derivative 
which will give uniqueness of the asymptotic limit at infinity in the $L^2$-topology, whence uniqueness of the limit in any topology follows.
\vskip5pt

We start with the following result.

\begin{proposition}
\label{Firstderbounds}
Let $u$ be a smooth solution to \req{GL} satisfying \req{lingro} and such that the scaled maps $\{u_R \}_{R>0}$ are relatively 
compact in $H^{1}_{\rm{loc}}(\mathbb{R}^3;\mathbb{R}^3)$. 
Then there is a constant  $C>0$ such that for all $x\in\R^3$, 
\begin{equation}
\label{1derbound} 
|x|^2(1-|u(x)|^2)+|x||\nabla u(x)| \leq C \, .
\end{equation}
\end{proposition}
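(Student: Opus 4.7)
The plan is the following. Since $u$ is smooth, both quantities in \eqref{1derbound} are automatically bounded on any fixed ball $B_{R_0}$, so it suffices to establish the two estimates for $|x|\geq R_0$ with $R_0$ chosen large enough.

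For the gradient bound $|x||\nabla u(x)|\leq C$, I would argue by contradiction via a blow-down procedure. Assume there is a sequence $\{x_n\}$ with $|x_n|\to+\infty$ and $|x_n||\nabla u(x_n)|\to+\infty$. Setting $R_n:=|x_n|$ and $y_n:=x_n/R_n\in\mathbb{S}^2$, the rescaling identity $|x_n||\nabla u(x_n)|=|\nabla u_{R_n}(y_n)|$ holds with $u_{R_n}$ solving \eqref{GLresc} for $R=R_n$. By the compactness assumption and Proposition \ref{descripblowdown}, I extract a subsequence such that $u_{R_n}\to \phi$ strongly in $H^1_{\rm loc}(\R^3;\R^3)$ with $\phi(x)=\omega(x/|x|)$ for some smooth harmonic map $\omega:\mathbb{S}^2\to\mathbb{S}^2$, and $y_n\to y^\ast\in\mathbb{S}^2$. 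Since $\phi$ is smooth at $y^\ast$, one has $\int_{B_r(y^\ast)}|\nabla \phi|^2 \leq Cr^3$ for small $r$. The key point is to check that the rescaled potential does not concentrate: a change of variables yields
\[
\int_{B_r(y_n)}\frac{R_n^2}{4}(1-|u_{R_n}|^2)^2\,dy = \frac{1}{4R_n}\int_{B_{rR_n}(R_n y_n)}(1-|u|^2)^2\,dx\leq \frac{1+r}{4}\cdot\frac{1}{(1+r)R_n}\int_{B_{(1+r)R_n}}(1-|u|^2)^2\,dx,
\]
which tends to $0$ by Lemma \ref{nopotential}. Combining both contributions gives $r^{-1}E_{R_n}(u_{R_n},B_r(y_n))\leq \eta_0$ for $r$ small fixed and $n$ large, so Lemma \ref{epsregularity} (applied with $\lambda=R_n$) yields a uniform pointwise bound on $|\nabla u_{R_n}|$ on $B_{r/2}(y_n)$, contradicting $|\nabla u_{R_n}(y_n)|\to +\infty$.

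For the modulus bound $|x|^2(1-|u(x)|^2)\leq C$, I would use the equation for $\rho:=1-|u|^2$, namely $-\Delta \rho + 2|u|^2\rho = 2|\nabla u|^2$. A bootstrap exploits the previous step: the gradient bound makes the scaled maps $\{u_R\}$ uniformly Lipschitz on $\{1/2<|y|<2\}$ for $R$ large, so an Ascoli-Arzel\`a argument combined with strong $H^1_{\rm loc}$-convergence to any $\mathbb{S}^2$-valued tangent map forces $|u_R|\to 1$ uniformly on $\mathbb{S}^2$. Hence $|u|\geq 1/2$ for $|x|\geq R_1$. Passing to scaled coordinates in any ball $B_r(y^\ast)\subset\{1/2<|y|<2\}$ with $|y^\ast|=1$, the function $\rho_R:=1-|u_R|^2$ then satisfies $-\Delta \rho_R+(R^2/2)\rho_R\leq 2|\nabla u_R|^2\leq 2C^2$ together with $\rho_R\leq 1$. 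A slight modification of Lemma 2 in \cite{BBH}, already invoked in the proof of Proposition \ref{propcomp}, yields $\rho_R\leq C'R^{-2}$ on $B_{r/2}(y^\ast)$, with a constant uniform over $y^\ast\in\mathbb{S}^2$ by a compactness-covering argument. Undoing the scaling gives $(1-|u(x)|^2)\leq C'|x|^{-2}$ for $|x|$ large.

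The main technical difficulty lies in the blow-down step for the gradient bound: although the Dirichlet part of the rescaled energy is small on small balls (since $\phi$ is smooth on $\mathbb{S}^2$), the potential part $\tfrac{R^2}{4}(1-|u_R|^2)^2$ could a priori concentrate near points of the unit sphere and invalidate the small energy hypothesis of Lemma \ref{epsregularity}; the crux of the argument is to rule out this concentration by transferring the global decay statement of Lemma \ref{nopotential} to the rescaled setting.
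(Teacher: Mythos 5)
Your proposal is correct and follows essentially the same route as the paper: blow-down along the contradiction sequence, strong $H^1_{\rm loc}$-convergence to a smooth zero-homogeneous tangent map via Proposition~\ref{descripblowdown} together with Lemma~\ref{nopotential} to kill both the defect measure and the scaled potential, small-energy regularity (Lemma~\ref{epsregularity}) for the gradient, and the BBH--Lemma~2 barrier for the quadratic modulus decay. The only cosmetic difference is that you separate the gradient and modulus estimates into two passes while the paper sets up a single contradiction for the combined quantity and delegates the $\varepsilon$-regularity plus BBH step to Step~1 of Proposition~\ref{propcomp}.
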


\begin{proof}
We prove the statement by contradiction. Assume \req{1derbound} were false, 
then there would be a sequence $\{ x_n \} \subset \mathbb{R}^3$ such that $R_n=|x_n| \to +\infty$ as $n \to +\infty$ and
\begin{equation}\label{hypcontr}
|x_n| |\nabla u (x_n)|+|x_n|^2 (1-|u(x_n)|^2) \mathop{\longrightarrow}\limits_{n \to +\infty} +\infty  \, .
\end{equation}
 For each integer $n$, let us consider $u_n(x):=u_{R_n}(x)=u(R_nx)$
as a entire solution of \eqref{GLresc}. Up to the extraction of a subsequence,  we may assume  that 
$x_n/R_n\to \bar{x} \in \partial B_1$ as $n \to +\infty$.  
By Proposition \ref{descripblowdown}, up to a further subsequence the sequence of scaled maps $\{u_n\}$ converges to $u_\infty(x)= \omega \left(x/|x|\right)$ strongly 
in $H^1_{\rm{loc}}(\mathbb{R}^3;\mathbb{R}^3)$ as $n\to +\infty$, where $\omega:\mathbb{S}^2 \to \mathbb{S}^2$ is an harmonic map. 
In addition 
$e_{R_n}(u_n)(x)dx \overset{*}{\rightharpoonup} \frac12 |\nabla u_\infty|^2dx+\nu$ where $\nu$ is a quantized cone-measure. 
Combining this property together with the strong convergence in $H^1_{\rm{loc}}(\R^3;\R^3)$ and Lemma \ref{nopotential}, we conclude that $\nu\equiv0$. 
Since $\omega$ is a smooth map we have $u_\infty \in C^\infty(\mathbb{R}^3 \setminus \{ 0\};\mathbb{S}^2)$. 
In particular $u_\infty$ is smooth around $\bar{x} \in\partial B_1$. Now we can argue as in Step 1 in the proof of Proposition \ref{propcomp} to find $\delta >0$ such that 
$|\nabla u_n|+R_n^2(1-|u_n|^2)\leq C_\delta$ in $B_\delta(\bar x)$ for some constant $C_\delta$ independent of $n$. Scaling back we obtain for $n$ large enough, 
$$ |x_n| |\nabla u (x_n)|+|x_n|^2 (1-|u(x_n)|^2) \leq C_\delta\,,$$ 
which obviously contradicts \eqref{hypcontr}.
\end{proof}

\begin{remark}
\label{modtoonerate}
For an arbitrary entire solution $u$ to \eqref{GL}, the estimate \req{1derbound} still holds under the assumption $|u(x)|=1+\mathcal{O}(|x|^{-2})$ as $|x|\to +\infty$. 
Indeed, since the scaled map $u_R$ given by \eqref{defscmap} satisfies \eqref{GLresc}, 
$\{\Delta u_R\}_{R>0}$ is equibounded in $L^\infty_{\rm{loc}} (\mathbb{R}^3 \setminus \{ 0 \})$. Therefore standard $W^{2,p}_{\rm{loc}}$ estimates 
and the Sobolev embedding show that $\{\nabla u_R\}_{R>0}$ is equibounded in $L^\infty_{\rm{loc}} (\mathbb{R}^3 \setminus \{ 0 \})$ 
which proves \req{1derbound}. Note also that \req{1derbound} implies \req{lingro}. 
\end{remark}

For a solution $u$ to \eqref{GL} satisfying  the assumptions of Proposition \ref{Firstderbounds}, 
we have $|u(x)|=1 +\mathcal{O}(|x|^{-2})$ and $|\nabla u (x)| =\mathcal{O}(|x|^{-1})$ as $|x| \to +\infty$. 
In order to get bounds on the higher order derivatives of $u$ at infinity it is very convenient to use the polar 
decomposition for $u$, {\it i.e.}, to write $u=\rho w$ for some nonnegative function $\rho$ and some $\mathbb{S}^2$-valued map $w$. 
The following result gives the $3D$ counterpart of the asymptotic estimates of \cite{S} 
for the $2D$ case, and it is essentially based on the techniques introduced in the proof of \cite{BBH2}, Theorem 1.

\begin{proposition}
\label{BBH}
Let $u$ be an entire solution of \req{GL} satisfying \req{1derbound}. Let $R_0 \geq 1$ be such that $|u(x)| \geq 1/2$ for $|x|\geq R_0/4$. For 
$R \geq R_0$ and $|x|\geq 1/4$, define $u_R(x)=u(Rx)= \rho_{R}(x) w_R(x)$ the polar decomposition of the scaled maps, i.e.,  
$\rho_R(x):=|u_R(x)|$ and $w_R(x):=u_R(x)/|u_R(x)|$.  Then for each $k \in\NN$ and each $\sigma \in (1,2)$ 
there exist constants $C=C(k,\sigma)>0$ and $C^\prime=C^\prime(k,\sigma)>0$ independent of $R$ such that
\begin{equation}
\label{polarderbounds}
\begin{array}{ll}
(\mathcal{P}_k^\prime) \qquad \qquad &  \| \nabla w_R \|_{C^k\big(\overline B_{2\sigma} \setminus B_{1/{2\sigma}}\big)} \leq C^\prime(k,\sigma) \,, \\[8pt]
 (\mathcal{P}_k^{\prime\prime}) \qquad \qquad &  \| R^2 (1- \rho_R) \|_{C^k\big(\overline B_{2\sigma} \setminus B_{1/{2\sigma}}\big)} \leq C^{\prime \prime}(k,\sigma) \, .
\end{array}
\end{equation}
As a consequence,  for each $k \in\NN$ there is a constant $C(k)>0$ such that
\begin{equation}
\label{highderbound}
\sup_{x \in \mathbb{R}^3} \big( |x|^{k+1} |\nabla^{k+1} u(x)|+|x|^{k+2} |\nabla^{k} (1-|u(x)|^2)|  \big) \leq C(k) \, .
\end{equation}
 \end{proposition}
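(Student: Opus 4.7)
My plan is to use the polar decomposition $u_R = \rho_R w_R$ on any shell $A_\tau := \overline B_{2\tau} \setminus B_{1/(2\tau)}$, $\tau\in(1,2)$, which is smooth because $\rho_R \in [1/2,1]$ there (from $|u|\geq 1/2$ on $\{|y|\geq R_0/4\}$ combined with $R\geq R_0$). Setting $\eta_R := R^2(1-\rho_R^2) = R^2(1-|u_R|^2)$, the pair $(\rho_R, w_R)$ satisfies, on the shell, the scaled polar system \eqref{polarsystem}; equivalently,
\[
\Delta w_R = -2\frac{\nabla\rho_R}{\rho_R}\cdot\nabla w_R - w_R|\nabla w_R|^2,
\qquad
-\Delta \eta_R + 2R^2\eta_R = 2R^2 |\nabla u_R|^2 + 2\eta_R^2,
\]
the second identity following from $\Delta u_R = -u_R\eta_R$ after scalar product with $u_R$ and use of $\rho_R^2 = 1-\eta_R/R^2$. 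I would prove $(\mathcal{P}_k')$ and $(\mathcal{P}_k'')$ jointly by induction on $k$, propagating the bounds across a decreasing chain of nested shells.

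The base case $k=0$ is immediate from \eqref{1derbound}. For the inductive step, assume $(\mathcal{P}_{k-1}')$ and $(\mathcal{P}_{k-1}'')$ hold uniformly on a slightly larger shell $A_{\tau_1}$ with $\tau_1>\sigma$. First, the identity $\Delta u_R = -u_R\eta_R$ has right-hand side uniformly bounded in $C^{k-1,\alpha}(A_{\tau_1})$, so interior Schauder estimates yield $u_R \in C^{k+1,\alpha}(A_{\tau_2})$ uniformly for any $\tau_2 < \tau_1$; in particular, $\partial^\beta |\nabla u_R|^2$ is uniformly bounded in $L^\infty$ for $|\beta|\leq k$. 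Next, applying $\partial^\alpha$ with $|\alpha|=k$ to the $\eta_R$-equation produces
\[
-\Delta(\partial^\alpha \eta_R) + (2R^2 - 4\eta_R)\partial^\alpha \eta_R = 2R^2 \partial^\alpha |\nabla u_R|^2 + 2 \sum_{0<|\beta|<k}\binom{\alpha}{\beta}\partial^\beta\eta_R \cdot \partial^{\alpha-\beta}\eta_R,
\]
a linear elliptic equation whose zeroth-order coefficient exceeds $R^2$ once $R$ is large (since $\eta_R$ is already uniformly bounded), and whose right-hand side is controlled in $L^\infty$ by $CR^2$ thanks to the $u_R$-regularity just obtained together with the inductive hypothesis.

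The main obstacle is this last equation: standard Schauder scales badly with the $R^2$-coefficient. I would bypass this by exploiting the large positive mass directly through the elementary maximum principle for $-\Delta + c$ with $c\geq c_0>0$, namely $\|v\|_{L^\infty(\Omega)}\leq \|v\|_{L^\infty(\partial\Omega)} + c_0^{-1}\|f\|_{L^\infty(\Omega)}$. Applied to $\partial^\alpha\eta_R$ with $c_0 = R^2$, this yields $\|\partial^\alpha\eta_R\|_{L^\infty(A_{\tau_2})} \leq C + \|\partial^\alpha\eta_R\|_{L^\infty(\partial A_{\tau_2})}$; the boundary trace is then absorbed by iterating the argument on a finite chain of progressively narrower shells $A_\sigma \subset \cdots \subset A_{\tau_2} \subset A_{\tau_1}$, using interior elliptic estimates for the same linear equation to upgrade $L^\infty$-control on one shell to $C^{k}$-control on a strictly interior sub-shell. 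This delivers $(\mathcal{P}_k'')$ on $A_\sigma$.

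With $(\mathcal{P}_k'')$ in hand, $\rho_R = \sqrt{1 - \eta_R/R^2}$ is uniformly bounded in $C^{k,\alpha}$, so $\nabla\rho_R/\rho_R$ is a $C^{k-1,\alpha}$ coefficient; combined with $(\mathcal{P}_{k-1}')$, Schauder applied to the $w_R$-equation delivers $w_R \in C^{k+1,\alpha}$ uniformly, which is $(\mathcal{P}_k')$. Finally, the pointwise estimate \eqref{highderbound} on the original solution $u$ follows by un-scaling: evaluating $(\mathcal{P}_k')$ and $(\mathcal{P}_k'')$ at $|x|=1$ with $R=|y|$ converts them into $|y|^{k+1}|\nabla^{k+1}u(y)| \leq C$ and $|y|^{k+2}|\nabla^k(1-|u(y)|^2)| \leq C$ for $|y|\geq R_0$, while smoothness of $u$ on the compact set $\{|y|\leq R_0\}$ handles the remaining bounded region.
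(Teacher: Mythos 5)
Your strategy — polar decomposition, a second-order equation with a large zeroth-order term for the modulus defect, induction over $k$ across nested shells — is essentially the one in the paper (which follows Bethuel--Brezis--H\'elein \cite[pp.~136--137]{BBH2}, with $X_R:=R^2(1-\rho_R)$ in place of your $\eta_R=R^2(1-\rho_R^2)$, an inessential difference). The first half of your inductive step (Schauder/$L^p$ bootstrap on $\Delta u_R=-u_R\eta_R$ and on the $w_R$-equation to pass from $C^{k-1}$ to $C^{k}$ control of $\nabla w_R$) matches the paper's proof of $(\mathcal{P}^\prime_{k+1})$. The scaling argument at the end is also fine.

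There is, however, a genuine gap in the step where you establish $(\mathcal{P}_k^{\prime\prime})$. The maximum-principle inequality you invoke, $\|v\|_{L^\infty(\Omega)}\leq \|v\|_{L^\infty(\partial\Omega)}+c_0^{-1}\|f\|_{L^\infty(\Omega)}$, applied to $v=\partial^\alpha\eta_R$ with $|\alpha|=k$, requires control of the boundary trace $\|\partial^\alpha\eta_R\|_{L^\infty(\partial\Omega)}$ — which is precisely the quantity whose bound you are trying to establish. Your claim that this trace is ``absorbed by iterating over progressively narrower shells, using interior elliptic estimates to upgrade $L^\infty$-control to $C^k$-control'' does not close the loop: each application of the maximum principle converts an interior bound into a still-unknown boundary bound on a larger shell, and the chain never terminates; and interior Schauder or $L^p$ estimates for $-\Delta v+cv=f$ with $c\sim R^2$ and $\|f\|_{L^\infty}\sim R^2$ carry $R$-dependent constants (rescaling $\tilde x=Rx$ shows derivatives pick up factors of $R$), so they cannot upgrade the uniform $C^{k-1}$-bound on $\eta_R$ to a uniform $C^{k}$-bound.

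The missing ingredients are exactly the two BBH lemmas. First, the Landau--Kolmogorov-type interpolation \cite[Lemma~A.1]{BBH2}: from $\|D^kX_R\|_{L^\infty}\leq C$ and $R^{-2}\|\Delta D^kX_R\|_{L^\infty}\leq C$ one gets the \emph{crude} but finite bound $\|D^{k+1}X_R\|_{L^\infty}\leq CR$; without such a quantitative a priori bound on the top-order derivative, no maximum-principle argument can even be set up. Second, the exponential-decay estimate \cite[Lemma~2]{BBH2}: writing $g_R:=D^{k+1}X_R$, which solves $-R^{-2}\Delta g_R+2g_R=f_R$ with $\|f_R\|_{L^\infty}\leq C$ and $\|g_R\|_{L^\infty}\leq CR$, one splits $g_R=\varphi_R+\psi_R$; the homogeneous part $\varphi_R$, whose boundary data is of size $CR$, is shown to decay like $CR\,e^{-cR\,\mathrm{dist}(x,\partial)}$ and hence is uniformly bounded in any fixed interior sub-shell, while the inhomogeneous part $\psi_R$ (with zero boundary data) is bounded by the comparison principle. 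It is this exponential killing of the $O(R)$ boundary data that your iteration replaces by hand-waving. A cutoff in the maximum principle combined with the interpolation bound could also work, but the bare maximum principle as you stated it cannot.
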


\begin{proof}
Observe that it is suffices to prove \req{polarderbounds} since \req{highderbound} follows by scaling. 
For $|x| \geq R_0/4$ we have $|u(x)| \geq 1/2$ so we can write and $u(x)=\rho(x) w(x)$ with $\rho(x):=|u(x)|$ and $ w(x):=u(x) \rho(x)^{-1}$ and 
the system \req{polarsystem} is
satisfied in $\R^3\setminus \overline B_{R_0/4}$.   Hence, 
for each $R\geq R_0$ the scaled maps $u_R$, $\rho_R$ and $w_R$ are well defined and smooth in $\R^3\setminus \overline B_{1/4}$. In addition, 
\req{polarsystem} yields by scaling the following Euler Lagrange equations,
\begin{equation}
\label{scaledpolarsystem}
\begin{cases}
{\rm div} (\rho_R^2\nabla w_R )+w_R \rho_R^2|\nabla w_R|^2=0  \\[5pt]
\Delta \rho_R+\rho_R R^2(1-\rho_R^2)=\rho_R |\nabla w_R|^2 
\end{cases}
\quad \text{in $\R^3\setminus \overline B_{1/4}$}\,.
\end{equation}
We will prove \req{polarderbounds} by induction over $k$, the case $k=0$ being easily true by assumption \req{1derbound}. We closely follow 
\cite[pg. 136-137]{BBH2} with minor modifications.
\vskip5pt

First we prove that $(\mathcal{P}_k^\prime)$-$(\mathcal{P}_{k}^{\prime \prime})$ implies $(\mathcal{P}_{k+1}^\prime)$. We set for simplycity
\begin{equation}
\label{X_R}
X_R:=R^2(1- \rho_R) \, ,
\end{equation}
so that the second equation in \req{scaledpolarsystem} can be rewritten as 
\begin{equation}
\label{eqrhoR}
-\Delta \rho_R=-\rho_R  |\nabla w_R|^2+ \rho_R (1+\rho_R) X_R \, .
\end{equation}
By the inductive assumptions \req{polarderbounds} the right hand side in \req{eqrhoR} is bounded in $C^k_{\rm{loc}}(B_4 \setminus \overline B_{1/4})$ uniformly 
with respect to $R \geq R_0$. Hence $\{\rho_R\}_{R\geq R_0}$ is bounded in $W^{k+2,p}_{\rm{loc}}(B_4 \setminus \overline B_{1/4})$ for each $p<+\infty$ 
by standard elliptic regularity theory. Then the Sobolev embedding implies that 
$\{\nabla \rho_R\}_{R\geq R_0}$ is also bounded in $C^k_{\rm{loc}}(B_4 \setminus \overline B_{1/4})$. 
Next rewrite the first equation in \req{scaledpolarsystem} as
\begin{equation}
\label{eqwR}
-\Delta w_R=w_R |\nabla w_R|^2+\frac{2\nabla \rho_R}{\rho_R} \nabla w_R \, .
\end{equation}
Since all the terms in the right hand side in \eqref{eqwR} are now 
bounded in $C^k_{\rm{loc}}(B_4 \setminus \overline B_{1/4})$
uniformly with respect to $R\geq R_0$, standard linear theory (differentiating the equation $k$-times)  also gives that 
$\{w_R\}_{R\geq R_0}$ is equibounded  in $W^{k+2,p}_{\rm{loc}}(B_4 \setminus \overline B_{1/4})$ for each $p<+\infty$. Therefore the right hand side in 
\eqref{eqwR} is in fact bounded in $W^{k+1,p}_{\rm{loc}}(B_4 \setminus \overline B_{1/4})$ uniformly with respect to $R\geq R_0$. 
Hence the linear $L^p$-theory yields the boundedness of $\{w_R\}_{R\geq R_0}$ in 
$W^{k+3,p}_{\rm{loc}}(B_4 \setminus \overline B_{1/4})$ for each $p<+\infty$. Then, by the Sobolev embedding, 
$\{\nabla w_R\}_{R\geq R_0}$ is bounded in $C^{k+1}_{\rm{loc}} (B_4 \setminus \overline B_{1/4})$, {\it i.e}, $(\mathcal{P}_{k+1}^\prime)$ holds. 
\vskip5pt

Now we prove that $(\mathcal{P}_k^\prime)$-$(\mathcal{P}_{k}^{\prime \prime})$  implies $(\mathcal{P}_{k+1}^{\prime\prime})$. 
We fix $\sigma \in (1,2)$ and we apply  $(\mathcal{P}_k^\prime)$, $ (\mathcal{P}_{k}^{\prime \prime})$ and  $ (\mathcal{P}_{k+1}^\prime)$ in 
$\overline B_{2\sigma^\prime}\setminus B_{1/2\sigma^\prime}$ for a fixed $\sigma<\sigma^\prime<2$, {\it e.g.}, $\sigma^\prime:=1+\sigma/2$. 
Since $K:=\overline B_{2\sigma}\setminus B_{1/2\sigma}$ is compact we can find finitely many points $\{ P_1, \ldots, P_m\} \subset K$ such that 
$K \subset \cup_{i=1}^m B_{\sigma^\prime-\sigma}(P_i)$ with $B_{2(\sigma^\prime -\sigma)}(P_i) \subset \overline B_{2\sigma^\prime}  \setminus B_{1/2 \sigma^\prime}$ for each $i=1,\ldots, m$. Then it suffices to show that $(\mathcal{P}_{k+1}^{\prime \prime})$ holds in each ball $B_i:=B_{\sigma^\prime-\sigma}(P_i)$ assuming  that 
$(\mathcal{P}_k^\prime)$, $ (\mathcal{P}_{k}^{\prime \prime})$ and  $ (\mathcal{P}_{k+1}^\prime)$ hold in $B^\prime_i:= B_{2(\sigma^\prime-\sigma)}(P_i)$. 
For simplicity we shall drop the subscript $i$. 

Taking \req{X_R} into account, we rewrite \req{eqrhoR} as
\begin{equation}
\label{eqX_R}
R^{-2} \Delta X_R=-\rho_R |\nabla w_R|^2+\rho_R (1+\rho_R) X_R \, .
\end{equation}
Denoting by $D^k$ any $k$-th derivative, since $\{\rho_R\}_{R\geq R_0}$, $\{X_R\}_{R\geq R_0}$, 
$\{w_R\}_{R\geq R_0}$ and $\{\nabla w_R\}_{R\geq R_0}$ are bounded in $C^k(\overline{B^\prime})$ 
by inductive assumption, differentiating \req{eqX_R} $k$-times leads to
$$\| D^k X_R\|_{L^\infty (B^\prime)}+ R^{-2} \| \Delta  D^k X_R \|_{L^\infty(B^\prime)} \leq C \, , $$
for some $C>0$ independent of $R\geq R_0$. 
Now we combine the above estimate with \cite[Lemma A.1]{BBH2} in $B \subset B^{\prime\prime} \subset B^\prime$ where $B^{\prime \prime}:=B_{3 (\sigma^\prime-\sigma)/2} (P_i)$ 
to obtain
\begin{equation} 
\label{derk+1X_R}
R^{-1} \| D^{k+1} X_R \|_{L^{\infty}(B^{\prime\prime})} \leq C \, 
\end{equation}
for a constant $C>0$ independent of $R\geq R_0$.
Finally we rewrite \req{eqX_R} as
\begin{equation}
\label{neweqX_R}
-R^{-2}\Delta X_R +2 X_R= 3 R^{-2}X_R^2-R^{-4} X_R^3 +\rho_R |\nabla w_R|^2=: \mathcal{T}_R \, .  
\end{equation}
As we already proved that $D^{k+1} \rho_R$ is bounded in $B^{\prime \prime}$ independently of $R\geq R_0$ and that $(\mathcal{P}_k^{\prime\prime})$, $(\mathcal{P}_{k+1}^\prime)$ hold in $B^{\prime\prime}$, taking \req{derk+1X_R} into account we infer that $f_R:=D^{k+1} \mathcal{T}_R$ satisfies  
$ \|  f_R\|_{L^{\infty}(B^{\prime\prime})} \leq C$ 
for a constant $C>0$ independent of $R\geq R_0$.
Then differentiating $(k+1)$-times \req{neweqX_R} we derive that $g_R:=D^{k+1} X_R$ satisfies 
\begin{equation}
\label{eqg_R}
\begin{cases}
-R^{-2} \Delta g_R +2 g_R =f_R  &  \text{in $B^{\prime\prime}$} \, ,\\
\| g_R\|_{L^\infty (B^{\prime\prime})} \leq CR \, , \\
\| f_R \|_{L^\infty (B^{\prime\prime})} \leq C \, ,
\end{cases}
\end{equation}  
for some $C>0$ independent of $R\geq R_0$. Next we write $g_R=\varphi_R +\psi_R$ in $\overline{B^{\prime\prime}}$ where $\varphi_R$ and $\psi_R$ are the 
unique smooth solutions of
\begin{equation}
\label{eqphi_R}
\begin{cases}
-R^{-2} \Delta \varphi_R +2 \varphi_R =0  &  \text{in $B^{\prime\prime}$} \, ,\\
\varphi_R= g_R  &  \text{on $\partial B^{\prime\prime}$} \, ,
\end{cases}
\end{equation}
and 
\begin{equation}
\label{eqpsi_R}
\begin{cases}
-R^{-2} \Delta \psi_R +2 \psi_R =f_R  &  \text{in $B^{\prime\prime}$} \, ,\\
\psi_R= 0 & \text{on $\partial B^{\prime\prime}$} \, .
\end{cases}
\end{equation}
Applying \cite[Lemma 2]{BBH2} in $B \subset B^{\prime\prime}$ to \req{eqphi_R}, 
the comparison principle in $B^{\prime\prime}$ to \req{eqpsi_R}, and the estimates in \req{eqg_R} we finally conclude 
$$\| D^{k+1} X_R \|_{L^\infty (B)}= \| g_R \|_{L^\infty(B)} \leq \| \varphi_R \|_{L^\infty(B)}+\| \psi_R \|_{L^\infty(B^{\prime\prime})}\leq C \,,$$
for some $C>0$ independent of $R\geq R_0$, {\it i.e.}, $(\mathcal{P}_{k+1}^{\prime\prime})$ holds in $B$.
\end{proof}

\begin{remark}
\label{C2compactness}
As a consequence of Proposition \ref{BBH}, Remark \ref{modtoonerate} and 
Proposition \ref{descripblowdown}, if $u$ is an entire solution to \eqref{GL} satisfying \req{1derbound}, then 
 $\{{u_R}_{|\mathbb{S}^2} \}_{R>0} $ is a compact 
subset of $C^2(\mathbb{S}^2;\mathbb{R}^3)$ and the limit as $R_n\to+\infty$ of any convergent sequence $\{{u_{R_n}}_{|\mathbb{S}^2} \}$ 
is an harmonic map $\omega\in C^2(\mathbb{S}^2;\mathbb{S}^2)$ (more precisely 
$\omega:=\phi_{|\mathbb{S}^2}$ where $\phi$ is given by Proposition \ref{descripblowdown}). 
In addition, for $n$ large 
the topological degree of ${u_{R_n}}_{|\mathbb{S}^2}$ is well defined and ${\rm deg}\,\omega={\rm deg}\,{u_{R_n}}_{|\mathbb{S}^2}={\rm deg}_{\infty}u\,$. 
\end{remark}

In order to prove uniqueness of the asymptotic limit of a solution $u$ at infinity,  
we need to establish a decay estimate on the radial derivative of $u$. 
As it will be clear below, such estimate gives the existence of a limit for the scaled maps $u_R$ as 
$R \to +\infty$ in $L^2(\mathbb{S}^2;\mathbb{R}^3)$. The a priori estimates in Proposition \ref{BBH}, 
as they yield compactness even in stronger topologies, 
will imply the convergence to an $\mathbb{S}^2$-valued harmonic map in $C^k(\mathbb{S}^2;\mathbb{R}^3)$ for any $k\in\NN$.

\begin{proposition}
\label{radderdecay}
Let $u$ be an entire solution of \req{GL} satisfiying \req{simonbound}. Then there exist $R_0\geq e$ and $C>0$ such that for any $R \geq R_0$, 
\begin{equation}
\label{radderinequality}
\int_{\{|x|>R\}} \frac{1}{|x|} \left| \frac{\partial u}{\partial r}\right|^2 dx \leq C\, \frac{\log R}{R^2} \, .
\end{equation}
\end{proposition}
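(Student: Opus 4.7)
My plan is to derive the estimate by working with the $\mathbb{S}^2$-valued phase $w:=u/|u|$ of the polar decomposition (well-defined for $|x|$ large thanks to \req{simonbound}) and exploiting the orthogonality $w\cdot\partial_r w=0$ coming from $|w|=1$. The proof naturally splits into a reduction step, a radial Pohozaev-type identity for $w$, and the ``tricky'' step extracting the $\log R/R^2$ rate.

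Writing $u=\rho w$ with $\rho:=|u|$, one computes $|\partial_r u|^2=\rho^2|\partial_r w|^2+(\partial_r\rho)^2$ (the cross term vanishing). Since \req{simonbound} gives $(\partial_r\rho)^2\leq C|x|^{-6}$, the contribution of $\rho$ to the radial integral is at most $\mathcal{O}(R^{-4})$, reducing the proof to an analogous bound on $I_w(R):=\int_{|x|>R}|x|^{-1}|\partial_r w|^2\,dx$. By Proposition~\ref{BBH} the phase $w$ solves ${\rm div}(\rho^2\nabla w)+\rho^2|\nabla w|^2 w=0$, which is a perturbation of order $\mathcal{O}(|x|^{-4})$ of the harmonic-map equation.

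The central identity is obtained by multiplying the equation for $w$ by $r^2\partial_r w$. The reaction term $|\nabla w|^2 w\cdot\partial_r w$ vanishes by $w\cdot\partial_r w=0$; integrating over $\mathbb{S}^2$ and integrating tangentially by parts (using that $\partial_r$ commutes with $\nabla_{\mathbb{S}^2}$) yields
\[
Z'(r)=-rY(r)-2r^2\mathcal{P}(r),\qquad Z(r):=\frac{r^2}{2}Y(r)-\frac{1}{2}A(r),
\]
where $Y(r):=\int_{\mathbb{S}^2}|\partial_r w(r,\omega)|^2\,d\omega$, $A(r):=\int_{\mathbb{S}^2}|\nabla_{\mathbb{S}^2}w(r,\omega)|^2\,d\omega$, and the drift error satisfies $|\mathcal{P}(r)|\leq Cr^{-5}Y(r)^{1/2}$ by \req{simonbound} and Cauchy--Schwarz. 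Integrating from $R$ to $+\infty$, and using that $Y(r)\to 0$ and $A(r)\to 2L$ where $L:=\frac{1}{2}\int_{\mathbb{S}^2}|\nabla_{\mathbb{S}^2}\omega|^2$ for any subsequential tangent map $\omega$ (provided by Remark~\ref{C2compactness}), one gets
\[
I_w(R)=Z(R)+L-2\int_R^{+\infty}r^2\mathcal{P}(r)\,dr.
\]

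The main obstacle is to upgrade this into the $\log R/R^2$ bound. The harmonic limit $\omega:\mathbb{S}^2\to\mathbb{S}^2$ is necessarily $\pm$-holomorphic, hence minimizes the Dirichlet energy in its homotopy class; consequently $A(R)\geq 2L$ and $Z(R)+L\leq \frac{R^2}{2}Y(R)$, while the perturbative tail is controlled by $CR^{-1}I_w(R)^{1/2}$ via Cauchy--Schwarz. To convert the resulting $\mathcal{O}(1)$ closure into the correct rate I would couple this with the Pohozaev/monotonicity identity for $u$, which rearranges into $(R^2(L-E(R)))'=-2R(\tilde F(R)-L)$ with $\tilde F(R)-L=\frac{1}{2}(A(R)-2L)+\mathcal{O}(R^{-2})$. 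The problem then reduces to proving $A(R)-2L=\mathcal{O}(R^{-2})$, which follows from $A'(r)=-2\int_{\mathbb{S}^2}\Delta_{\mathbb{S}^2}w\cdot\partial_r w\,d\omega$, the pointwise bound $|\Delta_{\mathbb{S}^2}w|\leq C$ supplied by Proposition~\ref{BBH}, and a Cauchy--Schwarz/Gronwall bootstrap absorbing the drift $\mathcal{P}$. Once $A(R)-2L=\mathcal{O}(R^{-2})$, integration of $(R^2(L-E))'=\mathcal{O}(R^{-1})$ gives $L-E(R)\leq C\log R/R^2$, and the monotonicity formula $\int_{|x|>R}|x|^{-1}|\partial_r u|^2\,dx\leq L-E(R)$ concludes.
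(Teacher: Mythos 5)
You have the right setup --- polar decomposition, radial Pohozaev identity for $w$, decay of the drift $G$ coming from \req{simonbound} --- and you correctly identify the crucial input: the $C^2$-subsequential limit $\omega$ is a harmonic map $\mathbb{S}^2\to\mathbb{S}^2$, hence minimizes the Dirichlet energy in its homotopy class, so $A(R)\geq 2L$. This is exactly what the paper uses. The gap is in the closing step.

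After invoking minimality you reach an inequality of the form $I_w(R)\leq \frac{R^2}{2}Y(R)+(\text{tail})$, declare it an ``$\mathcal{O}(1)$ closure'', and propose to bridge the gap by proving the separate estimate $A(R)-2L=\mathcal{O}(R^{-2})$ via a Cauchy--Schwarz/Gronwall bootstrap. This detour is both unnecessary and, as sketched, does not close. It is unnecessary because $\frac{R^2}{2}Y(R)=\frac{1}{2}\int_{\partial B_R}|\partial_r w|^2\,d\mathcal{H}^2=-\frac{R}{2}\,I_w'(R)$, so your inequality is literally the differential inequality $I_w(R)+\frac{R}{2}I_w'(R)\leq(\text{tail})$; multiplying by $2R$ gives $\frac{d}{dR}\big(R^2I_w(R)\big)\leq 2R\cdot(\text{tail})$, and since the tail is $\mathcal{O}(R^{-2})$ (the pointwise bound $|H|=\mathcal{O}(|x|^{-5})$ already suffices; no Cauchy--Schwarz on the sphere is needed), integrating from $R_0$ to $R$ yields $R^2 I_w(R)\leq C\log R$ directly. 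This is exactly the paper's closing step. It does not close as sketched, because from $|A'(r)|\leq C\,Y(r)^{1/2}$ and only the a priori bound $Y(r)\leq C/r^2$ you obtain $|A'(r)|\leq C/r$, which is not integrable at infinity; one cannot extract $A(R)-2L=\mathcal{O}(R^{-2})$ (nor even boundedness) from this without first improving the decay of $Y$, which is essentially the conclusion being sought --- the bootstrap is circular. In the paper, minimality of $\omega$ is used only for its sign, to discard the nonpositive term $L-\frac{1}{2}A(R)$ from the integrated identity; no rate for $A(R)\to 2L$ is established or needed.
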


\begin{proof}
By \req{simonbound} we can find $R_0 \geq e$ such that $|u(x)|\geq 1/2$ whenever $|x|\geq R_0$. 
Then we perform the polar decomposition of $u$, {\it i.e.}, for $|x|\geq R_0$ we write $u(x)=\rho(x) w(x)$ 
where $\rho(x)=|u(x)| \geq 1/2$ and $w(x) \in \mathbb{S}^2$. Due  to \req{simonbound} and 
\req{highderbound}, it is enough to prove \req{radderinequality} for $w$ 
since $\rho(x) \leq 1$ and $|\nabla \rho(x)|= \mathcal{O}(|x|^{-3})$ as $|x|\to +\infty$.  
Taking \req{polarderbounds} into account, we have $\nabla w(x)= \mathcal{O}(|x|^{-1})$ and 
$\Delta w(x)=\mathcal{O}(|x|^{-2})$ as $|x|\to +\infty$ so that equation  \req{polarsystem} can be rewritten as
\begin{equation}
\label{polareqw}
\Delta w(x) +w(x)|\nabla w(x)|^2=G(x) \, ,
\end{equation}
where 
$$G(x)=(1-\rho^2(x)) \big( \Delta w(x)+w(x) |\nabla w(x)|^2  \big)+ \nabla w(x) \cdot \nabla (1-\rho^2(x)) =\mathcal{O}(|x|^{-4}) $$
as $|x|\to+ \infty$ thanks  to \req{highderbound}. Next we multiply \req{polareqw} by $\ds \frac{\partial w}{\partial r}=\frac{x}{|x|} \cdot \nabla w$, 
and since $w$ and $\ds \frac{\partial w}{\partial r}$ are orthogonal, we obtain 
\begin{equation}
\label{radderdecay0}
 0=\left(\Delta w-G(x) \right) \cdot \frac{\partial w}{\partial r}=\frac{1}{|x|} \left| \frac{\partial w}{\partial r}\right|^2 +{\rm div}\, \Psi(x)-H(x) \, ,
 \end{equation}
where
$$\Psi(x)=\nabla w(x) \cdot \frac{\partial w}{\partial r}- \frac12|\nabla w(x)|^2 \frac{x}{|x|} \quad \hbox{and} \quad H(x)=G(x) \cdot \frac{\partial w}{\partial r} = \mathcal{O}(|x|^{-5}) $$
as $|x|\to +\infty$ by \req{simonbound}, \req{polarderbounds} and \req{highderbound}.
Integrating by parts \req{radderdecay0} in an annulus $A_{R^\prime,R}:=B_{R^\prime \setminus \overline{B_R}}$, with $R_0 \leq R< R^\prime$ gives
\begin{multline}
\label{radderdecay1}
\int_{A_{R^\prime,R}} \frac{1}{|x|} \left| \frac{\partial w}{\partial r}\right|^2 dx- 
\frac12 \int_{\partial B_R}\left| \frac{\partial w}{\partial r}\right|^2 d\mathcal{H}^2=  \frac{1}{2}\int_{\mathbb{S}^2}|\nabla_T\, w_{R^\prime}|^2d\mathcal{H}^2 
-\frac{1}{2}\int_{\mathbb{S}^2}|\nabla_T \,w_{R}|^2d\mathcal{H}^2+\\
+ \frac12 \int_{\partial B_R^\prime}\left| \frac{\partial w}{\partial r}\right|^2 d\mathcal{H}^2 +\int_{A_{R^\prime,R}} H\,dx \, ,
\end{multline}
where $w_R$ and $w_{R^\prime}$ are defined as Proposition \ref{BBH} and $\nabla_T$ denotes the tangential gradient.

Since \req{simonbound} obviously implies \eqref{lingro}, the Monotonicity Formula \req{monotonicity} yields 
$$\int_{\{|x|>R_0\}} \frac{1}{|x|} \left| \frac{\partial u}{\partial r}\right|^2 dx<+\infty\,.$$
Hence we can find a sequence $R^\prime_n \to +\infty$ such that 
\begin{equation}
\label{radderdecay2a}
 \int_{\partial B_{R^\prime_n}}  \left| \frac{\partial u}{\partial r}\right|^2 d\mathcal{H}^2 \mathop{\longrightarrow}\limits_{n\to+\infty} 0 \,.
\end{equation}
In view of Remark \ref{C2compactness} we can pass to a subsequence, still denoted by $\{R^\prime_n\}$, such that  
\begin{equation}
\label{radderdecay2} 
\| {u_{R_n^\prime}}_{|\mathbb{S}^2}-\omega \|_{C^2(\mathbb{S}^2;\mathbb{R}^3)} \mathop{\longrightarrow}\limits_{n\to+\infty} 0 \, ,
 \end{equation}
for some smooth harmonic map $\omega:\mathbb{S}^2 \to \mathbb{S}^2$ satisfying ${\rm deg} \, \omega= {\rm deg}_\infty u$.
Taking \req{simonbound} again into account, one may easily check that
\begin{equation}
\label{radderdecay3}
\int_{|x|>R_0}  \frac{1}{|x|}\left| \frac{\partial w}{\partial r}\right|^2dx<+\infty\,,\;\;
\int_{\partial B_{R^\prime_n}}  \left| \frac{\partial w}{\partial r}\right|^2 d\mathcal{H}^2 \mathop{\longrightarrow}\limits_{n\to +\infty} 0 \, ,
\;\; \int_{\mathbb{S}^2} |\nabla_T\,w_{R^\prime_n}|^2d\mathcal{H}^2 \mathop{\longrightarrow}\limits_{n\to +\infty} \int_{\mathbb{S}^2} |\nabla_T\,\omega|^2d\mathcal{H}^2  \, .
\end{equation}
 Choose $R^\prime=R^\prime_n$ in \req{radderdecay1}.  Taking \req{radderdecay3} into account and the integrability of $H$ 
 at infinity, we can pass to the limit $R^\prime_n\to+\infty$ to obtain 
 \begin{equation}
 \label{radderdecay4}
 \int_{\{|x|>R\}} \frac{1}{|x|} \left| \frac{\partial w}{\partial r}\right|^2 dx- \frac12 \int_{\partial B_R}\left| \frac{\partial w}{\partial r}\right|^2 d\mathcal{H}^2
 =   \frac{1}{2}\int_{\mathbb{S}^2} |\nabla_T\,\omega|^2d\mathcal{H}^2 -  \frac{1}{2}\int_{\mathbb{S}^2} |\nabla_T\,w_R|^2d\mathcal{H}^2+\int_{\{|x|>R\}} H\,dx \, ,
 \end{equation}
for each $R \geq R_0$. Then observe that ${\rm deg} \, {w_R}_{|\mathbb{S}^2}={\rm deg} \, \omega$ for each $R\geq R_0$ by Remark \ref{C2compactness}. On the other hand,  
$\omega:\mathbb{S}^2 \to \mathbb{S}^2$ is an harmonic map so that $\omega$ is energy minimizing in its own homotopy class. Therefore, 
\begin{equation}\label{enminhom}
\int_{\mathbb{S}^2} |\nabla_T\,\omega|^2d\mathcal{H}^2\leq\int_{\mathbb{S}^2} |\nabla_T\,w_R|^2d\mathcal{H}^2\,.
\end{equation}
Multiplying \req{radderdecay4} by $2R$ and using \eqref{enminhom}, we derive 
$$\frac{d}{dR} \left( R^2 \int_{\{|x|>R\}} \frac{1}{|x|} \left| \frac{\partial w}{\partial r}\right|^2dx \right)\leq 
2R \int_{\{|x|>R\}} H\, dx \, , $$
for every $R>R_0 $. Integrating the above inequality between $R_0$ and $R>R_0$,  using $H(x)=\mathcal{O}(|x|^{-5})$ and \eqref{radderdecay3},  
we finally obtain 
$$R^2 \int_{\{|x|>R\}} \frac{1}{|x|} \left| \frac{\partial w}{\partial r}\right|^2dx \leq
R_0^2 \intt{\{|x|>R_0\}} \frac{1}{|x|} \left| \frac{\partial w}{\partial r}\right|^2dx +C\int_{R_0}^R \frac{1}{r}dr \leq C( \log R +1)\, , $$
and the proof is complete.
\end{proof}

Now we are in a position to prove the  asymptotic symmetry of entire solutions of \req{GL}. 
\vskip5pt

\noindent {\rm \bf Proof of Theorem \ref{asymmetry}.}
Since $u$ satisfies \eqref{lingro} and $\{ u_R \}_{R>0}$ is relatively compact  in $H^1_{\rm{loc}}(\R^3;\R^3)$, 
we can apply Proposition \ref{Firstderbounds} and Proposition \ref{BBH} to obtain \req{simonbound}. Next 
we fix $R_0 $ as in Proposition \ref{radderdecay} and  we estimate  for $R_0\leq \tau_1 \leq \tau_2 \leq 2\tau_1$,
$$|u_{\tau_1}(\sigma)-u_{\tau_2}(\sigma)|^2\leq (\tau_2-\tau_1)\int_{\tau_1}^{\tau_2}\bigg|\frac{\partial u}{\partial r}(r\sigma)\bigg|^2dr\leq \int_{\tau_1}^{\tau_2}\bigg|\frac{\partial u}
{\partial r}(r\sigma)\bigg|^2rdr\quad\text{for every $\sigma\in\mathbb{S}^2$}\,. $$
Integrating the previous inequality with respect to $\sigma$, we infer from \eqref{radderinequality}
 that 
\begin{equation}\label{dyadic}
\int_{\mathbb{S}^2}|u_{\tau_1}-u_{\tau_2}|^2d\mathcal{H}^2\leq \int_{\{\tau_1\leq |x|\leq \tau_2\}} \frac{1}{|x|}\bigg|\frac{\partial u}{\partial r}\bigg|^2dx
\leq C\,\frac{\log\tau_1}{\tau_1^2}\quad\text{for every $R_0\leq \tau_1\leq\tau_2\leq 2\tau_1$}\,,
\end{equation}
where the constant $C$ only depends on $R_0$. 

Next we consider $R_0\leq R <R^\prime$ arbitrary. Define $k\in\NN$ to be the largest integer satisfying $2^kR\leq R^\prime$, and  
set  $\tau_j:=2^jR$ for $j=0,\ldots,k$ and $\tau_{k+1}:=R^\prime$. Using \eqref{dyadic} together with the triangle inequality, we estimate 
$$\| u_R-u_{R^\prime}\|_{L^2(\mathbb{S}^2)}\leq \sum_{j=0}^k\| u_{\tau_j}-u_{\tau_{j+1}}\|_{L^2(\mathbb{S}^2)}\leq C\sum_{j=0}^k\frac{\sqrt{\log\tau_j}}{\tau_j}\leq 
\frac{C}{R}\sum_{j=0}^{\infty} \frac{\sqrt{j\log 2 +\log R}}{2^j}\leq C\, \frac{\sqrt{\log R}}{R}\,,$$
for a constant $C$ which only depends on $R_0$. Obviously this estimate yields the uniqueness of the limit 
$\ds\omega:=\lim_{R \to +\infty} {u_R}_{|\mathbb{S}^2}$ in the $L^2$-topology. In view of Remark \ref{C2compactness} the convergence also holds in the $C^2$-topology and 
$\omega:\mathbb{S}^2\to\mathbb{S}^2$ is a smooth harmonic map satisfying ${\rm deg}\,\omega={\rm deg}_\infty u$.  So claim {\it (i)} in the theorem is proved. 
Then from claim {\it (i)}, \req{simonbound} and Proposition \ref{descripblowdown} we deduce that $u_R\to u_\infty$ strongly in 
$H^1_{\rm loc}(\R^3;\R^3)$ as $R\to+\infty$ with $u_\infty(x)=\omega(x/|x|)$, and claim {\it (ii)}. 

Moreover claim {\it (ii)} in Proposition \ref{descripblowdown} yields 
$$\int_{S^2} x |\nabla_T \,\omega | d\mathcal{H}^2=0\,.$$ 
As a consequence, if ${\rm deg}_\infty u=\pm 1={\rm deg} \, \omega $ the balancing condition above gives 
$\omega(x) = Tx$ for some $T \in O(3)$ by \cite[Proof of Theorem 7.3]{BCL}.  \prbox

%%%%%%%%%%%%%%%%%%%%%%%%%%%%%%%%%%%%%%%%%%%%%%%%%%%%%%%%%
%%%%%%%%%%%%%%%%%%%%%%%%%%%%%%%%%%%%%%%%%%%%%%%%%%%%%%%%%

\section{Proof of Theorem \ref{SYMMETRY}}

\noindent {\bf Proof of {\it (i)} $\Rightarrow$ {\it (ii)}.}  
This is just Theorem \ref{quantization}. 
\prbox
\vskip5pt

\noindent{\bf Proof of {\it (ii)} $\Rightarrow$ {\it (iii)}.} 
First we claim that the scaled maps $\{u_R\}_{R>0}$ given by \eqref{defscmap} are compact in $H^1_{\rm{loc}}(\R^3;\R^3)$. Indeed, by {\it (ii)} we can apply Proposition \ref{descripblowdown} to infer that 
from any weakly convergent sequence $\{u_{R_n}\}$ as $R_n\to+\infty$ we have 
$$\int_{B_1}\frac12 |\nabla \phi|^2 dx+\nu(B_1)=4\pi\,,$$
where $\phi$ is the weak limit of $\{u_{R_n}\}$ and $\nu$ is the defect measure as in Proposition \ref{descripblowdown}.  If 
$\nu \neq 0$ the above equality together with the structure of $\nu$ yields $\phi\equiv {\rm const}$ and $l=k_1=1$ which contraddicts the balancing condition in Proposition \ref{descripblowdown}, claim {\it (ii)}.  
Hence $\nu \equiv 0$ and $\{u_{R_n}\}$ is strongly convergent in $H^1_{\rm{loc}}(\R^3;\R^3)$.  

Now we can apply Theorem \ref{asymmetry} to get \req{simonbound} which obviously implies 
$|u(x)|=1+\mathcal{O}(|x|^{-2})$ as $|x|\to +\infty$.  Moreover $u_R\to u_\infty$ strongly in $H^1_{\rm loc}(\R^3;\R^3)$ as $R\to+\infty$ where $u_\infty(x)=\omega(x/|x|)$ for some  
smooth harmonic map $\omega:\mathbb{S}^2\to \mathbb{S}^2$ satisfying ${\rm deg}\,\omega={\rm deg}_\infty u$.  Therefore, 
$$4\pi |{\rm deg} \, \omega|=\int_{B_1} \frac12|\nabla u_\infty|^2dx = \lim_{R\to+ \infty} E_R(u_R,B_1)=\lim_{R\to+ \infty} \frac{1}{R} E(u,B_R)=4\pi \, , $$
so that ${\rm deg} \, \omega={\rm deg}_\infty u=\pm 1$. 
\prbox
\vskip5pt

\noindent{\bf Proof of {\it (iii)} $\Rightarrow$ {\it (iv)}.}  From Remark \ref{modtoonerate} we deduce that $u$ satisfies \eqref{lingro} 
and the scaled maps $\{ u_R \}_{R>0}$ are compact in $H^1_{\rm{loc}}(\R^3;\R^3)$.  
As a consequence we can apply Theorem \ref{asymmetry} to obtain estimate \req{simonbound}. 
In addition, up to an orthogonal transformation we may assume ${\rm deg}_\infty u=1$ and 
$\|u_R-{\rm Id}\|_{C^2(\mathbb{S}^2;\mathbb{R}^3)}\to 0$ as $R \to +\infty$. By degree theory we have $u^{-1}(\{0\}) \neq \emptyset$ and up to a translation, we may also assume that $u(0)=0$.   

Now we are in the position to apply the division trick of \cite{M2} (see also \cite{R} for another application). Let $f \in C^2([0,\infty))$ given by Lemma \ref{radode} 
and define 
$$v(x):=\frac{u(x)}{f(|x|)}\,.$$ 
Clearly $v \in C^2(\mathbb{R}^3 \setminus \{ 0\};\mathbb{R}^3)$, and it is straightforward to check that as $|x|\to 0$, 
\begin{equation}
\label{asymptorigin}
v(x)=B\,\frac{x}{|x|}+o(1) \quad\text{and} \quad \nabla v (x)=\nabla \bigg(B\, \frac{x}{|x|}\bigg)+ o(|x|^{-1}) \, , \quad\text{where $B:=\frac{\nabla u(0)}{f^\prime(0)}$} \, .
\end{equation} 
On the other hand, using Lemma \ref{radode} and the behaviour of $u$ at infinity, one may check that as $|x|\to +\infty$, 

\begin{equation}
\label{asymptinfinity}
v(x)=\frac{x}{|x|}+o(1) \, \qquad \nabla v(x)=\nabla \bigg(\frac{x}{|x|}\bigg)+o(|x|^{-1}) \, .
\end{equation}
Since $u$ solves \req{GL} and $f$ solves \req{cauchypb}, simple computations lead to 
$$\Delta v + f^2v(1-|v|^2)=-2\frac{f^\prime}{f}\,\frac{x}{|x|} \cdot \nabla v -\frac{2}{|x|^2}\,v \, .$$
Multiplying this equation by $\ds \frac{\partial v}{\partial r}=\frac{x}{|x|}\cdot \nabla v$ yields
\begin{equation}
\label{phozaevforv}
0\leq \left| \frac{\partial v}{\partial r}\right|^2\left( \frac{1}{|x|}+2 \frac{f^\prime}{f}\right)+\left( \frac{(1-|v|^2)^2}{4}\right) \left( 2 f f^\prime +\frac{2}{|x|}\right)={\rm div} \, \Phi(x) \, ,
\end{equation}
where
$$\Phi(x):= \left(  \frac12 |\nabla v|^2 \frac{x}{|x|}\right)- \left( \nabla v \cdot \frac{\partial v}{\partial r}\right) +\left( \frac{x}{|x|}f^2 \frac{(1-|v|^2)^2}{4} \right)+ \left( \frac{x}{|x|^3} (1-|v|^2) \right) \, . $$
Now we claim that 
\begin{equation}\label{lastclaim}
\int_{B_R \setminus B_\delta} {\rm div} \, \Phi\, dx=\int_{\{|x|=R\}} \Phi(x)\cdot \frac{x}{|x|}d\mathcal{H}^2- \int_{\{|x|=\delta\}} \Phi(x)\cdot \frac{x}{|x|}d\mathcal{H}^2\to 0 
\end{equation}
 as $R\to +\infty$ and $\delta \to 0$. Assume that the claim is proved. Then from \req{phozaevforv} we infer that $|v|\equiv 1$ and $\ds\frac{\partial v}{\partial r} \equiv 0$. 
 As a consequence, in view of \req{asymptinfinity} we derive that $|u(x)|\equiv f(|x|)$ and $v(x)\equiv x/|x|$ which concludes the proof.
\vskip5pt 

In order to prove \eqref{lastclaim} we first observe that as $|x|\to +\infty$, 
$$ |\nabla v|^2=\frac{2}{|x|^2}+o(|x|^{-2}) \, , \qquad \frac{\partial v}{\partial r}=o(|x|^{-1}) \, , \qquad 1-|v|^2=\mathcal{O}(|x|^{-2}) \, , $$
thanks to \req{asymptinfinity} and {\it (iii)}. Therefore, 
\begin{equation}
\label{divPhiinfinity}
\int_{\{|x|=R\}} \Phi(x)\cdot \frac{x}{|x|} d\mathcal{H}^2=\int_{\{|x|=R\}} \left( \frac{1}{|x|^2} +o(|x|^{-2}) \right)d\mathcal{H}^2=4\pi +o(1) \, \quad \hbox{as} \quad R\to +\infty \, .
\end{equation}
Next, using \req{asymptorigin},  we estimate as $|x|\to 0$, 
$$|\nabla v|^2=\bigg|\nabla\bigg( B\frac{x}{|x|}\bigg)\bigg|^2+o(|x|^{-2}) \,, \qquad \frac{\partial v}{\partial r}=o(|x|^{-1}) \, , \qquad 1-|v|^2=\frac{|x|^2-|Bx|^2}{|x|^2}+o(1) \, .  $$
Consequently, 
\begin{multline}
\label{divPhiorigin}
\int_{\{|x|=\delta\}} \Phi(x) \cdot \frac{x}{|x|} d\mathcal{H}^2= \int_{\{|x|=\delta\}} \left( \frac{1}{2} \bigg|\nabla \bigg(B\frac{x}{|x|}\bigg)\bigg|^2+ \frac{|x|^2-|Bx|^2}{|x|^4}+o(|x|^{-2})\right) d\mathcal{H}^2= \\
=\int_{\{|x|=1\}} \left( \frac{1}{2} \bigg|\nabla \bigg(B\frac{x}{|x|}\bigg)\bigg|^2- \frac{|Bx|^2}{|x|^4} \right) d\mathcal{H}^2 +4\pi+o(1) \quad \hbox{as} \quad \delta \to 0 \, . 
\end{multline}
Since   a direct computation gives 
$$\int_{\{|x|=1\}} \left(  \frac{1}{2} \bigg|\nabla \bigg(A\frac{x}{|x|}\bigg)\bigg|^2- \frac{|Ax|^2}{|x|^4} \right) d\mathcal{H}^2=0$$ 
for any constant matrix $A\in\mathbb{R}^{3\times 3}$, claim \eqref{lastclaim} follows combining \req{divPhiinfinity} and \req{divPhiorigin}. 
\prbox
\vskip5pt

\noindent{\bf Proof of {\it (iv)} $\Rightarrow$ {\it (i)}.}  Let $u$ be a nonconstant local minimizer as given by Theorem \ref{existence}. Since $R^{-1}E(u,B_R) \to 4\pi$ as $R \to+\infty$ and $u(0)=0$, and 
as we already proved {\it (ii)} $\Rightarrow$ {\it (iii)} $\Rightarrow {\it (iv)}$, we conclude that up to a rotation $u(x)=U(x)$ as given by \req{GLsolutions}. 
Hence $U$ is a nonconstant local minimizer of the energy, which is still the case when composing with translations and orthogonal transformations.
\prbox

%%%%%%%%%%%%%%%%%%%%%%%%%%%%%%%%%%%%%%%%%%%%%%%%%%%%%%%%%%%%%

\section*{Acknowledgments}
The authors would like to thank Fabrice Bethuel, Alberto Farina and Giovanni Leoni for useful discussions. This work was initiated while A.P. was
visiting the Carnegie Mellon University. He would like to thank Irene Fonseca  for the kind invitation and the warm hospitality. 
V.M. was partially supported by  the Center for
Nonlinear Analysis (CNA) under the National Science Fundation Grant
No. 0405343.

%%%%%%%%%%%%%%%%%%%%5555555%%%%%%%%%%%%%%%%%%%%%%5

\end{document}